\documentclass[11pt]{article}

\usepackage[a4paper,margin=28mm]{geometry}
\usepackage{amsmath,amssymb,amsthm,mathtools}
\usepackage{enumitem}
\usepackage{microtype}
\usepackage[hidelinks]{hyperref}
\usepackage{xcolor}

\newtheorem{theorem}{Theorem}[section]
\newtheorem{proposition}[theorem]{Proposition}
\newtheorem{lemma}[theorem]{Lemma}
\newtheorem{corollary}[theorem]{Corollary}
\theoremstyle{definition}

\numberwithin{equation}{section}
\newcommand{\R}{\mathbb R}
\newcommand{\Z}{\mathbb Z}
\newcommand{\C}{\mathbb C}

\newcommand{\cC}{\mathcal C}

\newcommand{\cF}{\mathcal F}
\newcommand{\cG}{\mathcal G}

\newcommand{\cL}{\mathcal L}
\newcommand{\cM}{\mathcal M}

\newcommand{\cS}{\mathcal S}
\newcommand{\norm}[1]{\left\lVert#1\right\rVert}

\newcommand{\ip}[2]{\left\langle#1,#2\right\rangle}
\newcommand{\dd}{\,\mathrm d}
\newcommand{\Rea}{\operatorname{Re}}
\newcommand{\Ima}{\operatorname{Im}}
\newcommand{\supp}{\operatorname{supp}}
\newcommand{\uloc}{\mathrm{uloc}}
\newcommand{\one}{\mathbf 1}

\title{Global well-posedness of defocusing cubic NLS in $M^{\infty,1}(\R)$}
\author{Friedrich Klaus}
\date{\today}

\begin{document}
\maketitle

\begin{abstract}
We prove global well-posedness of the one-dimensional defocusing cubic
nonlinear Schr\"odinger equation in the modulation space $M^{\infty,1}(\R)$.
This space imposes no spatial decay and contains $C_b^2(\R)$ as well as all
absolutely convergent sums of plane waves.  The result applies to arbitrary
data in this space, including large smooth quasiperiodic profiles and their
localized perturbations.  The proof constructs a nonnegative density satisfying a local conservation law from forward Weyl ratios, which are defined through half-line square-integrable solutions of the associated spectral problem. A suitable nonlinear combination of localized integrals of this density controls the modulation norm. Finally, choosing the spatial localization scale and NLS scaling in a coordinated way makes the accumulated boundary flux small enough to continue every mild solution globally.
\end{abstract}

\tableofcontents
\clearpage

\section{Introduction}\label{sec:main}

We study the Cauchy problem for the defocusing cubic nonlinear
Schr\"odinger equation (NLS) on the real line,
\begin{equation}\label{eq:nls}
 iq_t=-q_{xx}+2|q|^2q,\qquad q(0)=q_0.
\end{equation}
For more than fifty years, this equation has been a central example in the
study of nonlinear dispersive waves and completely integrable systems.
The last decade has brought substantial progress in its low-regularity
theory.  Koch--Tataru \cite{KT2018} and Killip--Vi\c{s}an--Zhang
\cite{KVZ2018} constructed conserved quantities controlling rough Sobolev
norms.  Building on this progress, Harrop-Griffiths--Killip--Vi\c{s}an
\cite{HGV2024} proved global well-posedness in $H^s(\R)$ for every
$s>-1/2$, reaching the sharp threshold in this scale.  These results
exploit complete integrability to go beyond the reach of perturbative
well-posedness arguments.

One manifestation of integrability is an infinite hierarchy of conserved
Hamiltonians whose flows commute.  Its first members include the mass,
momentum, and energy, which for sufficiently regular, decaying functions
are given by
\begin{equation}\label{eq:classical-conserved}
 \begin{aligned}
 \mathsf M(q)&=\int_\R |q|^2\dd x,
 &\mathsf P(q)&=\Ima\int_\R\bar q q_x\dd x,\\
 \mathsf E(q)&=\int_\R\bigl(|q_x|^2+|q|^4\bigr)\dd x.
 \end{aligned}
\end{equation}
Their Hamiltonian flows generate phase rotations, spatial translations,
and NLS evolution, respectively; see \cite{FaddeevTakhtajan1987}.
The integrable structure, discovered by Zakharov and Shabat
\cite{ZakharovShabat1972,ZakharovShabat1973}, supplies conservation laws
beyond these classical quantities and is the starting point for the
global estimates in this paper.

Here we address a different obstruction to global well-posedness: the
absence of spatial decay.  The integrals in \eqref{eq:classical-conserved}
require spatial integrability of their densities and are generally not
finite for non-decaying data, even when the data are smooth.  Already a
nonzero constant has infinite mass and energy.  Periodic data can be
treated on the torus, but a sum of two periodic functions with
incommensurable periods falls outside that setting.  For example, a
global theory on the line should accommodate
\[
 q_0(x)=A\cos x+B\cos(\alpha x),
 \qquad A,B\in\C,\quad \alpha\in\R\setminus\mathbb Q,
\]
as well as localized perturbations of this profile, without a smallness
condition on $A$ or $B$.  Such extended wave patterns lead to a natural
question: can integrability still prevent finite-time breakdown when
the usual conserved integrals are infinite?

Non-decaying backgrounds have long been important in NLS.  The finite-gap
method produces explicit periodic and quasiperiodic solutions
\cite{BelokolosEtAl1994,ItsKotlyarov1976,Kotlyarov1976}, while Zhidkov spaces and
the theory of nonzero boundary conditions provide PDE frameworks for
bounded profiles and finite-energy perturbations
\cite{Gallo2008,Gerard2006,Zhidkov2001}.  More recent work has developed
the theory for slowly decaying data in modulation spaces
\cite{CHKP2017,Klaus2023,OW2020}, for perturbations of periodic backgrounds
\cite{CHKP2021,KlausKunstmann2022}, and for almost periodic data
\cite{FLLZ2025,Oh2015LP,Oh2015AP}.  These results treat different aspects
of the problem, often using spatial integrability, a prescribed background,
or assumptions on the frequencies and spectral data.

The modulation space $M^{\infty,1}(\R)$ offers a common setting for a broad
class of these non-decaying profiles.  Its norm sums the $L^\infty$ norms
of the pieces of a function in frequency intervals of fixed length.
It imposes no spatial integrability and contains $C_b^2(\R)$, the space
of twice continuously differentiable functions with bounded derivatives
up to order two, as well as every absolutely convergent sum of plane waves.
It therefore accommodates smooth quasiperiodic backgrounds, localized
perturbations, and bounded profiles with no recurrence in space.
Note though that the frequency summability in its norm is a restriction:
boundedness alone, or even Bohr almost periodicity, does not imply
membership in $M^{\infty,1}$.
The space is a Banach algebra on which the Schr\"odinger group is strongly
continuous, so local well-posedness follows by Picard iteration.
The difficulty is to obtain a bound that continues these solutions:
localizing the conserved integrals introduces boundary fluxes whose
accumulated effect is not controlled by the local theory.

In \cite{Klaus2023} the author obtained global results for arbitrarily large finite
spatial integrability exponents, assuming enough regularity, and left the
non-decaying endpoint open.  The present paper resolves this endpoint
problem for \eqref{eq:nls}: every initial value in $M^{\infty,1}(\R)$
generates a unique global mild solution, with locally Lipschitz dependence
on the initial data on compact time intervals.  No smallness, periodicity,
arithmetic, or spectral assumption is imposed on data in this space.
This gives a global well-posedness framework for non-decaying profiles
with the frequency summability specified by the modulation norm.

\subsection*{Main result}
\addcontentsline{toc}{subsection}{Main result}

We first define what we mean by modulation spaces. Choose a nonnegative, real, even function
$\varphi\in C_c^\infty((-2,2))$ such that
$\sum_{j\in\Z}\varphi(\xi-j)=1$, and define the frequency projections
\[
 \square_j f=\mathcal F^{-1}
   \bigl(\varphi(\cdot-j)\widehat f\bigr).
\]
The modulation space we are interested in is
\begin{equation}\label{eq:Mdef}
 M^{\infty,1}(\R)
 =\left\{f\in\mathcal S'(\R):
   \norm f_{M^{\infty,1}}
   :=\sum_{j\in\Z}\norm{\square_jf}_{L^\infty}<\infty\right\}.
\end{equation}
This norm makes $M^{\infty,1}$ a Banach space, and different frequency
partitions of the above type give equivalent norms; see
\cite{BGOR2007,Klaus2023}.  A mild solution on
an interval $I$ containing $0$ is a function $q\in C(I;M^{\infty,1})$ which satisfies
\begin{equation}\label{eq:duhamel}
 q(t)=e^{it\partial_x^2}q_0
 -2i\int_0^te^{i(t-s)\partial_x^2}\bigl(|q|^2q\bigr)(s)\dd s.
\end{equation}

\begin{theorem}[Global well-posedness]\label{thm:main}
For every $q_0\in M^{\infty,1}(\R)$ there is a unique global mild solution
\[
 q\in C\bigl(\R;M^{\infty,1}(\R)\bigr)
\]
of \eqref{eq:nls}.  For every $T>0$, its solution map
\[
 M^{\infty,1}\longrightarrow C([-T,T];M^{\infty,1})
\]
is locally Lipschitz.
\end{theorem}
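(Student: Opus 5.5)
The argument splits into local theory plus an a priori bound. I would first record the local theory. The space $M^{\infty,1}$ is a Banach algebra, and $e^{it\partial_x^2}$ is a strongly continuous group on it with $\norm{e^{it\partial_x^2}f}_{M^{\infty,1}}\lesssim(1+|t|)^{1/2}\norm f_{M^{\infty,1}}$. Picard iteration on \eqref{eq:duhamel} in $C([-\tau,\tau];M^{\infty,1})$ then produces a unique mild solution, with existence time $\tau\gtrsim\min(1,\norm{q_0}_{M^{\infty,1}}^{-2})$ and Lipschitz dependence on $q_0$ on that interval. Uniqueness of mild solutions and a blow-up alternative follow in the usual way: if the maximal time $T^*$ is finite, then $\norm{q(t)}_{M^{\infty,1}}\to\infty$. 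Given this, global existence together with the locally Lipschitz solution map on $[-T,T]$ reduces to a single statement. For each $T$ and $R$ we need a bound $\sup_{|t|\le T}\norm{q(t)}_{M^{\infty,1}}\le C(T,R)$ whenever $\norm{q_0}_{M^{\infty,1}}\le R$. The Lipschitz bound on $[-T,T]$ then comes from iterating the local Lipschitz estimate over finitely many steps of length $\tau(C(T,R))$. By approximation and continuous dependence, it suffices to prove this a priori bound for smooth solutions with, say, Schwartz perturbations of nice data. The time-reversal symmetry $q(t)\mapsto\overline{q(-t)}$ reduces us to $t\ge0$.

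\textbf{The conserved density.} For the a priori bound I would follow the abstract. Consider the Zakharov--Shabat operator $L=\begin{pmatrix} i\partial_x & -iq\\ -i\bar q & -i\partial_x\end{pmatrix}$ at spectral parameter $\kappa\ge1$ large. For defocusing NLS, $L$ is self-adjoint, so for $\Ima$-shifted or real-large $\kappa$ there are solutions of the spectral problem that are square-integrable on the half-line $[x,\infty)$. Their ratio $g(x;\kappa)$, the forward Weyl ratio, satisfies a Riccati equation $g_x=2\kappa g+q-\bar q g^2$ (up to normalisation) with $|g|<1$. From $g$ I would form a density $\rho(x,t;\kappa)\ge0$, a nonlinear expression such as $-\log(1-|g|^2)$ or $\kappa$ times a $\Rea$-part combination. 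It obeys a local conservation law $\partial_t\rho+\partial_x j=0$, and the expansion in $1/\kappa$ gives $\rho\approx|q|^2/\kappa+\dots$ to leading order. The key properties to establish are the following. First, $g$ and hence $\rho$ depend only on $q$ restricted to $[x,\infty)$, so non-decaying data cause no trouble. Second, $\rho\ge0$. Third, the bound $|j|\lesssim\kappa\rho+\dots$ holds, controlled by local quantities.

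\textbf{Controlling the norm and the flux.} The density must then control the norm. Define $N_\kappa(t)=\sup_{y}\int\psi((x-y)/\ell)\,\rho(x,t;\kappa)\,dx$ with a bump $\psi$, and combine the values $N_\kappa$ over dyadic $\kappa$ nonlinearly, e.g.\ a weighted sum over $\kappa$. The aim is to show that this quantity is comparable to $\sup_y\sum_j\norm{\square_j q}$-type local norms, giving two-sided bounds $\norm q_{M^{\infty,1}}\lesssim F(\text{localized }\rho\text{ integrals})\lesssim G(\norm q_{M^{\infty,1}})$. On the other side, $\frac{d}{dt}\int\psi_\ell\rho=\ell^{-1}\int\psi'_\ell j$, and this flux is bounded by $\ell^{-1}$ times local density integrals at neighbouring positions.

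\textbf{Main obstacle: the flux.} I expect the flux estimate to be the main difficulty, because the flux grows with $\kappa$ and a naive Gronwall argument gives exponential growth that may not close with the nonlinear $G$. My plan is to rescale first. Using $q_\lambda(x,t)=\lambda q(\lambda x,\lambda^2t)$, I would choose $\lambda$ so that the rescaled data are small in the relevant local sense relative to $\kappa\ge1$. I would then pick $\ell$ large, depending on $T$ and $R$, so that the accumulated flux $\int_0^{\lambda^{-2}T}\ell^{-1}\sup|j|\,dt$ stays below the size of the initial localized integrals. A continuity (bootstrap) argument on $[0,T]$ then keeps the localized integrals within a factor $2$, which yields $C(T,R)$. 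Coordinating $\lambda$ and $\ell$ is exactly where I would expect the hardest bookkeeping.
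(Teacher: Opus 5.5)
Your outer structure matches the paper's: local theory with a blow-up alternative, a positive density built from a half-line Weyl ratio, unnormalized spatial windows, and NLS scaling coordinated with the window size. But the step that carries the argument, recovering $\norm q_{M^{\infty,1}}$ from localized density integrals, is only stated as an aim, and the mechanism you suggest cannot achieve it.

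\textbf{Coercivity.} At a real height $\kappa$ the density is, to leading order in the amplitude, $2\kappa|(2\kappa\pm\partial_x)^{-1}\bar q|^2$. So every windowed integral at dyadic heights is essentially a windowed $L^2$ norm of $q$ through a filter that is nearly constant on dyadic frequency shells; it sees only the $\ell^2$ distribution of local mass. For example, take $q=\epsilon J^{-1/2}e^{iJx}P_J(x)$ with $P_J$ a Rudin--Shapiro polynomial of degree $J-1$. It has $\norm q_\infty\le\sqrt2\,\epsilon$ and leading-order windowed integrals comparable to those of $\epsilon e^{iJx}$, yet $\norm q_{M^{\infty,1}}\gtrsim\epsilon\sqrt J$. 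So no combination over dyadic $\kappa$ can give $\norm q_{M^{\infty,1}}\lesssim F\lesssim G(\norm q_{M^{\infty,1}})$.

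The missing idea is localization to \emph{unit} frequency boxes. The paper applies the Galilean maps $\cG_n$ for every $n\in\Z$ at one fixed height, forms one windowed density $B_{n,R}$ per box, and takes square roots box by box before summing. Even then, coercivity for large data is nonperturbative. It rests on:
\begin{itemize}
\item the exact realization $(S_m)_{m\ge1}=h_\kappa(A_q^+)(\bar q e_1)$;
\item the dual frame $\bar q e_1=\sum_n r_n(A_q^+)F_n^+$;
\item an amplitude-independent resolvent observation giving $\norm q_\infty\lesssim\sum_n\beta_{n,R}$;
\item a frequency-box absorption argument.
\end{itemize}
Your sketch has no substitute for these.

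\textbf{Flux.} Your bound $|j|\lesssim\kappa\rho+\dots$ also fails. For $q=\epsilon e^{i\xi x}$ one finds $|j|\approx2|\xi|\rho$: density moves at the group velocity, which is unbounded on $M^{\infty,1}$. Moreover, for a single-height density $2\kappa|g|^2/(1-|g|^2)$, the exact current (in the paper's convention) is $4\kappa\Ima(qg)/(1-|g|^2)$. This is only linear in $g$, so its windowed integral involves the full local mass of $q$ rather than the density. In addition, the single-height filter decays only like $\langle\xi\rangle^{-1}$, which is not summable over boxes.

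The paper's heights $\kappa,2\kappa,3\kappa$ with weights $(1,-2,1)$ remove both defects. They force $S_0=T_0=0$, so $J_\kappa$ is bilinear in features and companions. They also give a filter with $\langle\xi-n\rangle^{-3}$ decay in frame $n$. The companion convolution $\gamma_{n,R}\lesssim\sum_\ell\langle n-\ell\rangle^{-2}\beta_{\ell,R}$ then yields the summable box-wise bound $D_t^+\beta_{n,R}\lesssim R^{-1}(\gamma_{n,R}+\norm q_\infty\beta_{n,R})$.

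Only with this bound in hand does your window gain close. The paper takes $R=H/\lambda^2$, so that $\tau/R=T/H$, and adds saturation and a frequency-collapse bound for the initial datum.

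Finally, you do not need a bound uniform over norm balls. The paper's bound depends on $q_0$ through $\lambda$, and local Lipschitz dependence comes from iterating local stability along the reference trajectory.
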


\subsection*{Previous results and almost periodic initial data}
\addcontentsline{toc}{subsection}{Previous results and almost periodic initial data}

The global theory for slowly decaying data has developed along several
directions.  Dodson--Soffer--Spencer \cite{DSS2020} considered bounded
periodic, quasiperiodic, and random profiles and proved local existence for
analytic data on the line.  Their global continuum result in that paper
concerns a regularized nonlinearity.  For the cubic equation itself, they
subsequently obtained global well-posedness for sufficiently smooth data
in $L^p(\R)$ with $2<p<\infty$ \cite{DSS2021}.  Schippa
\cite{Schippa2022} used decoupling and smoothing estimates to improve the
regularity requirements in modulation spaces.  The results of
\cite{Klaus2023} include global well-posedness in $M^{1}_{p,1}$ for
$2\le p<\infty$.  These results allow infinite mass, but their finite
spatial integrability exponent excludes nonzero almost periodic functions.
In joint work with Kunstmann \cite{KlausKunstmann2022}, we treated another
class of non-decaying data, namely an $H^1(\R)$ perturbation of a periodic
profile in $H^s(\mathbb T)$, $s>3/2$.  Theorem~\ref{thm:main} allows
the background itself to be an arbitrary element of $M^{\infty,1}$.

In both research directions, modulation spaces and spaces of sums of periodic and non-periodic Sobolev functions, several results were obtained.
First, the global modulation-space result of Chaichenets--Hundertmark--
Kunstmann--Pattakos \cite{CHKP2017} treats $M^{p,p'}$ with $p>2$
sufficiently close to $2$, while Oh--Wang \cite{OW2020} use
perturbation-determinant conservation laws and Galilean transformations
to obtain global well-posedness in $M^{2,p}$, $2\le p<\infty$.
Neither result reaches the spatial $L^\infty$ endpoint considered here.
Second, the local theory of \cite{CHKP2021} allows an $L^2(\R)$
perturbation of an $H^1(\mathbb T)$ profile for nonlinearities including
the cubic one; its global theorem concerns the quadratic nonlinearity.
The global cubic result in \cite{KlausKunstmann2022} requires the stronger
perturbation and background regularity stated above.

Concerning the classical theory of finite-gap solutions, Theorem~\ref{thm:main} also places regular finite-gap NLS solutions in an
ambient Banach space carrying a well-posed flow.  The initial data need
not be finite-gap or close to a finite-gap potential.  Similarly, the
theorem does not require a finite-energy perturbation of a constant
background: nonconstant smooth quasiperiodic functions generally have
derivatives outside $L^2(\R)$, as do many other elements of
$M^{\infty,1}$.

For almost periodic data there is a separate line of research.  Here
almost periodic means a uniform limit on $\R$ of finite sums of the
functions $e^{i\xi x}$, $\xi\in\R$.  Quasiperiodic functions are obtained
by restricting a continuous periodic function of finitely many variables
to a line.  The corresponding frequencies need not lie in a discrete
subset of $\R$.  Oh \cite{Oh2015AP} proved local well-posedness in
algebras of almost periodic functions with absolutely summable Fourier
coefficients.  He also obtained global solutions for a class of limit
periodic data, under regularity and quantitative periodic approximation
assumptions \cite{Oh2015LP}.  More recently, Papenburg
\cite{Papenburg2025} gave local well-posedness results for several
dispersive equations with quasiperiodic data, including NLS.
Damanik--Li--Xu \cite{DLX2024} treated the NLS Cauchy problem with
polynomial decay of the quasiperiodic Fourier coefficients and studied
its weakly nonlinear dynamics.  The latter direction has also been
developed in higher dimensions by Xu \cite{Xu2025}.  The conditions in
these local and weakly nonlinear results concern decay of coefficients
indexed by the underlying frequency lattice; they should be distinguished
from decay of the initial function in physical space.

Another approach uses dispersive estimates adapted to quasiperiodic
functions.  In \cite{Klaus2023Strichartz} the author proved a space-time averaged
$L^4$ Strichartz estimate for frequencies in $\Z+\sqrt{2}\Z$.  The
averaging in time prevents a direct application of the usual $TT^*$
argument to the Duhamel integral.  Schippa \cite{Schippa2025QP}
obtained finite-time estimates using square function and decoupling
arguments, as well as generalizations of the time-averaged estimate.
His multilinear estimates give local well-posedness for the cubic NLS
with $\nu$ rationally independent frequencies at Sobolev regularity
$s>(\nu-1)/2$ in the frequency-lattice index.  Below this threshold
the solution map fails to be $C^3$ at the origin.  These spaces use
the spatial mean of $|f|^2$, in the sense of Besicovitch almost
periodicity, and need not consist of bounded functions.  Their low
regularity theory is therefore complementary to the global theory
in $M^{\infty,1}$ considered here.

Very recently, Schippa \cite{Schippa2026QP} used short-time bilinear
estimates to obtain local well-posedness results for quasiperiodic
KdV and Benjamin--Ono data while preserving the initial Sobolev
regularity.  Inami \cite{Inami2026} proved long-time Strichartz
estimates for two-frequency Schr\"odinger data with algebraic
frequency ratio, and an endpoint $L^4$ estimate.  These results
illustrate both the progress in dispersive methods for non-decaying
data and the role of arithmetic in quantitative long-time estimates.

A particularly influential question in this context is Deift's conjecture
for KdV: does almost periodicity of the initial data lead to global
solutions which are almost periodic in time?  We refer to
\cite[Problem~1]{Deift2017} for its formulation and motivation.
Damanik--Goldstein \cite{DG2016} proved global existence for small
analytic quasiperiodic KdV data with Diophantine frequencies.
Binder--Damanik--Goldstein--Luki\'c \cite{BDGL2018} established almost
periodicity in time under suitable spectral assumptions, in particular
for this small quasiperiodic class.  The unrestricted strong formulation,
requiring almost periodicity in both space and time, is false:
Chapouto--Killip--Vi\c{s}an \cite{CKV2024} constructed almost periodic
initial data whose bounded KdV evolution loses spatial almost periodicity
at a later time.  This makes the choice of topology and of the admissible
class of initial data essential.

For the defocusing cubic NLS, Fillman--Li--Luki\'c--Zhou
\cite{FLLZ2025} recently proved global existence and almost periodicity
in both space and time for small analytic quasiperiodic data with
Diophantine frequencies.  More generally, their inverse spectral result
applies to almost periodic data whose Dirac operator has purely absolutely
continuous spectrum satisfying quantitative thickness conditions.
Their small-data theorem applies, for example, to
\[
 q_0(x)=\varepsilon\bigl(\cos x+\cos(\sqrt{2}\,x)\bigr)
\]
when $|\varepsilon|$ is sufficiently small.  Theorem~\ref{thm:main}
gives a global mild solution for every $\varepsilon\in\R$, and more
generally for $A\cos x+B\cos(\alpha x)$ with arbitrary $A,B\in\C$
and $\alpha\in\R$.  Thus it answers, in the defocusing case, the
concrete continuation question raised in the introduction of
\cite{Klaus2023}.  Its conclusion concerns global well-posedness in
$M^{\infty,1}$; almost periodicity in time is a further dynamical
property which is not established here.

We explain precisely one consequence for spatial almost periodicity.
For an additive subgroup $\Gamma\subset\R$, let
\[
 M_\Gamma=\overline{\operatorname{span}
 \{e^{i\xi x}:\xi\in\Gamma\}}^{\,M^{\infty,1}}.
\]
This is a closed subalgebra, invariant under conjugation and the
Schr\"odinger group.  The local Picard iteration therefore leaves
$M_\Gamma$ invariant, and Theorem~\ref{thm:main} continues its solutions
globally in that space.  Since $M^{\infty,1}$ embeds into $L^\infty$,
each element of $M_\Gamma$ is spatially almost periodic.  In particular,
if
\[
 q_0(x)=\sum_{\nu\ge1}c_\nu e^{i\xi_\nu x},
 \qquad \sum_{\nu\ge1}|c_\nu|<\infty,
\]
then $q_0\in M_\Gamma$ for the group generated by the $\xi_\nu$:
indeed, $\|e^{i\xi x}\|_{M^{\infty,1}}=1$ for the nonnegative partition
in \eqref{eq:Mdef}.  This gives a global spatially almost periodic
solution without any arithmetic condition on these frequencies.
The assertion is in the $M^{\infty,1}$ topology and does not assert
preservation of the stronger absolute Fourier summability norm.
Thus Theorem~\ref{thm:main} gives a positive answer to the global-existence
and spatial-persistence questions underlying the NLS analogue of Deift's
conjecture for the class $M_\Gamma$, without smallness or arithmetic
assumptions.  Almost periodicity in time is not established here.

Note that the restriction to $M_\Gamma$ is essential to the scope of this conclusion.
Bohr almost periodicity requires uniform approximation by trigonometric
polynomials, whereas the definition of $M_\Gamma$ requires approximation
in the stronger modulation norm.  In fact, the almost periodic KdV data
constructed in \cite{CKV2024} lie outside $M^{\infty,1}$.  To see this,
their construction and nonlinear smoothing theorem
\cite[Theorem~4.3 and Section~5]{CKV2024} give such a datum in the form
\[
 u_* = w(t_*)+r_*,\qquad
 w(t)=e^{-t\partial_x^3}
 \bigl[\operatorname{sq}(\alpha_1x)+\operatorname{sq}(\alpha_2x)\bigr],
 \qquad \operatorname{sq}(x)=\operatorname{sgn}(\sin x),
\]
where $\alpha_1/\alpha_2$ is irrational, $w(t_*)$ is continuous and
almost periodic, and $r_*$ has absolutely summable Fourier coefficients;
in particular, $r_*\in M^{\infty,1}$.  The Fourier frequencies of $w(t_*)$
belong to two arithmetic lattices, with at most $N<\infty$ frequencies in
each interval $(j-2,j+2)$.  Writing $c_\xi$ for its Bohr Fourier
coefficients, the bound of each Fourier coefficient by the uniform norm
and the nonnegative partition in \eqref{eq:Mdef} yield
\[
 \sum_\xi |c_\xi|
 =\sum_{j\in\Z}\sum_\xi\varphi(\xi-j)|c_\xi|
 \le N\sum_{j\in\Z}\|\square_jw(t_*)\|_{L^\infty}.
\]
The left side diverges: the Airy evolution preserves the coefficient
magnitudes $2/(\pi|k|)$ at frequencies $\alpha_\ell k$, for odd
$k\in\Z$ and $\ell=1,2$.  Thus $w(t_*)\notin M^{\infty,1}$ and hence
$u_*\notin M^{\infty,1}$.  This illustrates why the present result is a 
statement for a specified class of almost periodic data, rather than 
for all Bohr almost periodic or merely bounded data.

\paragraph{The focusing equation.}
We only prove our result for the defocusing equation.
The restriction to the defocusing sign enters the global a priori estimate;
the local theory in Section~\ref{sec:local} works for either sign.
For the defocusing equation, the Zakharov--Shabat operator in
\eqref{eq:Lax} is a positive spectral parameter plus a skew-adjoint
operator.  Its resolvent exists at every positive height, independently
of the amplitude, and the forward Weyl ratio satisfies $|g_{\lambda,+}|<1$.
These facts yield the positive density and the coercivity used below.
For the focusing sign the off-diagonal symmetry changes, and neither
this amplitude-independent resolvent bound nor the disk bound is
available for arbitrary bounded data.  Thus the present argument does
not establish the focusing analogue of Theorem~\ref{thm:main}.
This is a limitation of the proof, not an assertion of finite-time
blow-up: in particular, the Sobolev-space theorem of \cite{HGV2024}
holds for both signs.

\paragraph{Recent related work.}
Very recently, Kotani, Wang, Xu, and Zhang \cite{KWXZ2026} proved global
well-posedness for both the focusing and defocusing cubic NLS in
$M^{\infty,1}_5(\R)$, which requires five derivatives in the
modulation-space norm.  Our result treats the defocusing equation in
$M^{\infty,1}(\R)$ without any differentiability assumption on the initial
datum.  Their work has a broader scope in other directions: it constructs
global spectral actions for both NLS hierarchies and establishes, among
other consequences, preservation of spatial almost periodicity and
spectral invariants.  Although both approaches use the Dirac spectral
problem associated with NLS, the proofs follow very different lines:
\cite{KWXZ2026} constructs the flow through the Sato--Segal--Wilson
framework and derives uniform bounds from spectral reconstruction and
compactness, whereas our proof obtains global continuation from a
positive microscopic density, its local flux identity, and coercivity in
$M^{\infty,1}$. The two works thus provide complementary advances in the global theory for non-decaying data, differing both in the scope and regularity of their results and in the methods used to obtain them.

\subsection*{Proof strategy}
\addcontentsline{toc}{subsection}{Proof strategy}

The proof uses integrability to construct a nonnegative density whose integrals over spatial windows remain finite for bounded, non-decaying potentials.  Its conservation
law has a current containing no derivatives of the potential.  We then control the modulation norm using a family of nonlinear spectral components (we call them "spectral features") indexed by integer shifts of the spectral parameter, and estimate the change of the density in large spatial windows. To be precise, the spectral features are the sequence-valued functions
\[
F_n^+[q]=h_\kappa(A_q^++n)(\bar q e_1),\qquad n\in\mathbb Z.
\]
where $h_\kappa$ is the function and $A_q^+$ the self-adjoint operator on $L^2(\mathbb R;\ell^2(\mathbb N))$ defined below.
The main issue is to
make this estimate compatible with the size of the initial windowed
quantity.  A saturation of the density, together with the NLS scaling,
provides the required balance.

We make this more precise. Fix a spectral height $\kappa>0$.  For general non-decaying functions with no prescribed limit behaviour at $\pm\infty$, there is no easy way to define the classical Jost solutions for the Zakharov--Shabat operator. What can still be defined is the so-called forward Weyl ratio
$g_{\lambda,+}$ of the Zakharov--Shabat operator. The forward Weyl ratio is related to the diagonal Green functions $g_{21}$ of
\cite{HGV2024} by
\[
g_{\lambda,+}(x)
=\frac{2g_{21}(x;\lambda)}{2+\gamma(x;\lambda)}.
\]
For sufficiently decaying potentials, it is equivalently the ratio
of the second and first components of the left Jost solution. $g_{\lambda,+}$ satisfies the Ricatti equation
\[
 (g_{\lambda,+})_x=\bar q-2\lambda g_{\lambda,+}-qg_{\lambda,+}^2,
 \qquad |g_{\lambda,+}|\le
 \frac{\|q\|_\infty}{\sqrt{\lambda^2+\|q\|_\infty^2}+\lambda}<1.
\]
At the heights $\lambda_a=a\kappa$, use $d=(1,-2,1)$ and, for integers
$m\ge0$, set
\[
 S_m=\sum_{a=1}^3d_ag_{\lambda_a,+}^m,
 \qquad T_m=\sum_{a=1}^3d_a\lambda_ag_{\lambda_a,+}^m.
\]
Here $m$ indexes ordinary scalar powers of the Weyl ratio.  The two cancellations
$\sum_ad_a=\sum_ad_a\lambda_a=0$ produce the positive density and current
\begin{align*}
 E_\kappa&=30\kappa\sum_{m\ge1}|S_m|^2,\\
 J_\kappa&=60\kappa\sum_{m\ge1}
   \Ima[(2T_m+qS_{m+1})\overline{S_m}],
 \qquad \partial_tE_\kappa+\partial_xJ_\kappa=0.
\end{align*}
We give a quick motivation for these objects. Essentially we want to replace a conserved density by a nonnegative one
without changing its integral for decaying potentials.  At a single
spectral height, the Riccati equation for \(g=g_{\lambda,+}\) gives
\[
 \operatorname{Re}(qg)
 =-\frac12\partial_x\log(1-|g|^2)
   +2\lambda\frac{|g|^2}{1-|g|^2}.
\]
Thus the density \(\operatorname{Re}(qg)\), whose integral is the
logarithmic transmission quantity for sufficiently decaying \(q\),
differs from a nonnegative density by a spatial derivative.

The three-height construction follows the same principle, while the
cancellations in \(d=(1,-2,1)\) provide additional spectral decay.
Writing \(g_a=g_{\lambda_a,+}\), the corresponding identity is
\[
 \frac12\operatorname{Re}\bigl(q(5g_1-4g_2+g_3)\bigr)
 =\partial_xH_\kappa^++E_\kappa,
\]
where
\[
 H_\kappa^+
 =-\frac14\sum_{a,b=1}^3
   \frac{60d_ad_b}{a+b}\log|1-g_a\overline{g_b}|.
\]
The coefficients \((5,-4,1)\) are the row sums of the matrix
\(\bigl(60d_ad_b/(a+b)\bigr)_{a,b=1}^3\).  Pairing the Riccati
equations at different heights and expanding the resulting geometric
series produces the sum of squares defining \(E_\kappa\).
For sufficiently decaying potentials, the spatial derivative integrates
to zero, so \(E_\kappa\) has the same integral as the signed combination
of transmission densities on the left.

Subtracting \(\partial_xH_\kappa^+\) from the density requires adding
\(\partial_tH_\kappa^+\) to its current to preserve the conservation law.
In this corrected current, all derivatives of \(q\) cancel, yielding
the expression for \(J_\kappa\) above.  The resulting density and current
remain well-defined for bounded, non-decaying potentials, even when
their integrals over the whole line are unavailable. Section~\ref{sec:density}
proves the conservation law first for smooth solutions and then on the existing lifespan
of every mild endpoint solution.

For $q_n=\cG_nq=e^{-i(nx+n^2t)}q(t,x+2nt)$ and the unnormalized window
$\chi_{R,y}(x)=\chi((x-y)/R)$, define
\[
 B_{n,R}(t,y)=\int\chi_{R,y}^2E_\kappa[q_n]\dd x,
 \qquad
 k_R(q)=\sum_n\sup_y\sqrt{\frac{B_{n,R}(t,y)}{\kappa+B_{n,R}(t,y)}}.
\]
Taking the supremum over all window centers removes the spatial translation in the Galilean transform, so these suprema depend only on the snapshot $q(t)$, by which we mean its spatial profile $q(t,\cdot)$.  If
$\beta_{n,R}^2=(30\kappa)^{-1}\sup_y B_{n,R}(t,y)$, then
$k_R\asymp\sum_n\min(\beta_{n,R},1)$; and this is what we mean by saturation.
The square root gives the linear size of a small feature, as required
for an $\ell^1$ modulation norm, while saturation bounds each large
feature's contribution by one.  This prevents the initial value sum $k_R(q_0)$ from
growing too quickly with the window size.  Coercivity is retained because
a large feature forces many neighboring spectral features to be large;
Section~\ref{sec:large-window-spectral} makes this precise.

The nonlinear features have an exact self-adjoint realization.  On
$L_x^2\ell_m^2$, with $X_0=0$ and $D=-i\partial_x$, set
\[
 (A_q^+X)_m=\frac DmX_m+i\bar qX_{m-1}-iqX_{m+1},
 \qquad
 h_\kappa(s)=\frac{8\kappa^2}
 {(2\kappa+is)(4\kappa+is)(6\kappa+is)}.
\]
After undoing the Galilean gauge, $S = (S_m)_{m \ge1}$ is
$F_n^+=h_\kappa(A_q^++n)(\bar q e_1)$.  Choose a smooth compactly
supported partition $\sum_n\theta(s+n)=1$ and put
$r_n(s)=\theta(s+n)/h_\kappa(s+n)$.  Section~\ref{sec:fock} proves the
uniformly local reconstruction
\[
 \bar q e_1=\sum_n r_n(A_q^+)F_n^+.
\]
A weighted half-line energy estimate for the resolvent controls the first
component of each reconstructed term, independently of $\|q\|_\infty$.
Section~\ref{sec:observation-coercivity} combines this estimate with the
Schur bound and a frequency-box argument to obtain
\begin{equation}\label{eq:strategy-coercivity}
 \|q\|_\infty\lesssim k_R+k_R^2,
 \qquad \|q\|_{M^{\infty,1}}\lesssim k_R(1+k_R)^9,
 \qquad R\ge1.
\end{equation}
The exponent $9$ is inessential; any fixed polynomial coercivity bound
would suffice for continuation.

Spreading of the adapted features and a summable convolution estimate for
the companion $T$ yield the large-window differential inequality
\begin{equation}\label{eq:strategy-growth}
 D_t^+k_R(q(t))\lesssim R^{-1}k_R(q(t))(1+k_R(q(t)))^2.
\end{equation}
Section~\ref{sec:active-flux} justifies differentiation of the nonattained
spatial suprema and the infinite frequency sum directly for mild solutions.
The factor $R^{-1}$ comes from differentiation of the window.

For $S_\lambda q(x)=\lambda q(\lambda x)$ and $R=H/\lambda^2$, the
initial estimate of Section~\ref{sec:frequency-envelope} is
\begin{equation}\label{eq:strategy-initial}
 k_{H/\lambda^2}(S_\lambda q_0)
 \lesssim1+H^{1/6}\|q_0\|_M^{1/3}
\end{equation}
for sufficiently small $\lambda>0$.  Its proof uses
$\|S_\lambda q_0\|_M\lesssim\lambda\|q_0\|_M$ and the frequency collapse
\[
 \sum_j a_j\langle\,\cdot-\lambda j\rangle^{-3}
 \longrightarrow\Big(\sum_j a_j\Big)\langle\,\cdot\rangle^{-3}
 \quad\hbox{in }\ell^1(\Z),\qquad a_j=\|\square_jq_0\|_\infty.
\]
More explicitly, writing $N=\|q_0\|_M$, the leading envelope is
$\sqrt H\,N\langle n\rangle^{-3}$.  Its unsaturated sum is of size
$\sqrt H\,N$, whereas
\[
 \sum_n\min\{\sqrt H\,N\langle n\rangle^{-3},1\}
 \lesssim 1+H^{1/6}N^{1/3}.
\]
This reduction is what allows the window gain to dominate the quadratic
relative growth in \eqref{eq:strategy-growth}; the unsaturated initial
bound would cancel that gain in the same bootstrap.
To reach original time $T$, the scaled solution must reach
$\tau=T/\lambda^2$, so $\tau/R=T/H$.  Choosing
$H=C_*(1+T)^{3/2}(1+\|q_0\|_M)$ closes the bootstrap.
Section~\ref{sec:endpoint} applies coercivity and the local continuation
criterion directly to the maximal mild solution; local stability gives the
asserted dependence on the initial datum.

\subsection*{Acknowledgements}
This paper was written with substantial help of GPT 5.6 Sol and GPT 6 Astra. The author provided the initial prompt and specified the mathematical technique to be used, after which the models developed the analysis and derived the reported results. The main result was derived by GPT 5.6 Sol and later shortened by GPT 6 Astra. The author then reviewed, edited, and reorganized the generated material to improve its clarity and readability, again with assistance from these models. The author takes responsibility for the paper’s final content.
The author also thanks Herbert Koch for helpful discussions concerning the results and presentation of this paper.

\section{Modulation spaces and local well-posedness}\label{sec:local}

Our Fourier convention is
\[
 \widehat f(\xi)=\int_\R e^{-ix\xi}f(x)\dd x,
 \qquad f(x)=\frac1{2\pi}\int_\R e^{ix\xi}\widehat f(\xi)\dd\xi,
\]
initially for Schwartz functions and then for tempered distributions.
We write $D=-i\partial_x$ and $a(D)$ for the Fourier multiplier with
symbol $a$.  The notation $X\lesssim_\alpha Y$ means $X\le C_\alpha Y$,
and $X\asymp_\alpha Y$ means that both inequalities hold.  Unsubscripted
norms are over $\R$, and $[s]_+=\max(s,0)$.  We also use
\begin{equation}\label{eq:basic-notation}
 \langle x\rangle=(1+|x|^2)^{1/2},\qquad
 D_t^+f(t)=\limsup_{h\downarrow0}\frac{f(t+h)-f(t)}h.
\end{equation}
Thus $D_t^+$ denotes the upper right Dini derivative.
The partition in \eqref{eq:Mdef} can be constructed by choosing a
nonnegative even $\psi\in C_c^\infty((-2,2))$, positive on $[-1,1]$,
and setting
\[
 \varphi(\xi)=\frac{\psi(\xi)}{\sum_{j\in\Z}\psi(\xi-j)}.
\]
The denominator is positive, smooth, and $1$-periodic, so this gives
all the stated properties of $\varphi$.

We first establish local well-posedness in
$M=M^{\infty,1}(\R)$.  Its blow-up alternative reduces
Theorem~\ref{thm:main} to an a priori $M$-bound on every compact time
interval.  The contraction argument needs that $M$ is a Banach algebra,
embeds into $L^\infty$, and is stable under the free flow and the NLS
dilation, so we record these estimates first.  They are standard; see, for
example, \cite{BGOR2007,Klaus2023}.

The embedding and algebra estimate below ensure that the nonlinearity
in \eqref{eq:duhamel} is an ordinary bounded function and belongs to $M$.
The Duhamel formula follows from
$(\partial_t-i\partial_x^2)q=-2i|q|^2q$ by variation of constants.

\begin{lemma}[Elementary modulation estimates]\label{lem:modulation}
The space $M$ is complete.  Moreover, there is a constant $C$ such that
\begin{align}
 \norm{fg}_M&\le C\norm f_M\norm g_M,\label{eq:Malgebra}\\
 \norm f_{L^\infty}&\le C\norm f_M,\label{eq:Membed}\\
 \norm{e^{it\partial_x^2}f}_M
 &\le C\langle t\rangle^{1/2}\norm f_M.\label{eq:SchrM}
\end{align}
The Schr\"odinger group is strongly continuous on $M$.  Moreover, for
$0<\lambda\le1$,
\begin{equation}\label{eq:dilationM}
 \norm{S_\lambda f}_M\le C\lambda\norm f_M,
 \qquad S_\lambda f(x)=\lambda f(\lambda x).
\end{equation}
For fixed $\lambda>0$, $S_\lambda$ and $S_\lambda^{-1}$ are bounded on $M$.
\end{lemma}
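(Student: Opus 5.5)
The plan is to reduce every estimate to elementary bounds on single frequency pieces $\square_j f$, using the almost-orthogonality $\square_j\square_k=0$ for $|j-k|\ge 4$. We write $\tilde\square_j=\sum_{|k-j|\le 3}\square_k$, so that $\square_j=\square_j\tilde\square_j$.

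\textbf{Completeness and embedding.} For the embedding, $f=\sum_j\square_jf$ in $\mathcal S'$, and the series converges absolutely in $L^\infty$. This gives \eqref{eq:Membed} with $C=1$.

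For completeness, let $(f_k)$ be Cauchy in $M$. Then each sequence $(\square_jf_k)_k$ is Cauchy in $L^\infty$, and by \eqref{eq:Membed} the $f_k$ converge uniformly to some bounded $f$. Hence $\square_jf_k\to\square_jf$ uniformly, because $\square_j$ is convolution with an $L^1$ kernel. Fatou's lemma applied to the $\ell^1$ sum then gives $f_k\to f$ in $M$.

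\textbf{Algebra estimate.} Write $fg=\sum_{k,l}\square_kf\,\square_lg$. The product $\square_kf\,\square_lg$ has Fourier support in $[k+l-4,k+l+4]$, so $\square_j$ annihilates it unless $|j-k-l|\le 6$. The kernel of $\square_j$ has $L^1$ norm $\|\check\varphi\|_{L^1}$, independent of $j$, since the projections are modulations of a single operator. Therefore
\[
\norm{\square_j(fg)}_\infty\lesssim\sum_{|j-k-l|\le6}\norm{\square_kf}_\infty\norm{\square_lg}_\infty .
\]
Summing over $j$ gives \eqref{eq:Malgebra}.

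\textbf{Free Schr\"odinger flow.} Write $\square_je^{it\partial_x^2}f=K_{j,t}*\tilde\square_jf$, where $K_{j,t}$ has symbol $\varphi(\xi-j)e^{-it\xi^2}$. Modulating by $e^{-ijx}$ turns this symbol into $\varphi(\xi)e^{-it(\xi^2+2j\xi)}e^{-itj^2}$. The linear phase $e^{-2itj\xi}$ is a translation and the constant $e^{-itj^2}$ is unimodular, so $\|K_{j,t}\|_{L^1}=\|\mathcal F^{-1}(\varphi e^{-it\xi^2})\|_{L^1}$.

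The main obstacle is to bound this last quantity by $C\langle t\rangle^{1/2}$. For $|t|\le1$ it is bounded by integrating by parts twice. For $|t|\ge1$, the kernel is $\varphi(D)$ applied to the fundamental solution, and we split space. In the region $|x|\lesssim|t|$ we use the dispersive bound $|t|^{-1/2}$; integrating over a region of length about $|t|$ contributes $\langle t\rangle^{1/2}$. In the region $|x|\gg|t|$ the phase $x\xi-t\xi^2$ is nonstationary on $\supp\varphi$, and two integrations by parts give a bound $\langle x\rangle^{-2}$. Summing over $j$ with the overlap factor $7$ gives \eqref{eq:SchrM}.

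\textbf{Strong continuity.} For $f\in M$ the tail $\sum_{|j|>J}\norm{\square_jf}_\infty$ is small. The growth bound \eqref{eq:SchrM} is uniform for $|t|\le1$, so it suffices to treat finitely many pieces. On a single piece, $\|K_{j,t}-K_{j,0}\|_{L^1}\to0$ as $t\to0$ by dominated convergence, using the $t$-uniform decay bound $\langle x\rangle^{-2}$ for $|t|\le1$.

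\textbf{Dilations.} We have $\widehat{S_\lambda f}(\xi)=\widehat f(\xi/\lambda)$, so $S_\lambda(\square_kf)$ has Fourier support in $\lambda(k-2,k+2)$. For $\lambda\le1$, each unit box $j$ meets only the $k$ with $|\lambda k-j|\le 4$. Conversely, each $k$ meets at most $C$ values of $j$, since $\lambda\le 1$. Therefore
\[
\norm{S_\lambda f}_M\le\sum_j\sum_{|\lambda k-j|\le4}\norm{\check\varphi}_{L^1}\norm{S_\lambda\square_kf}_\infty\le C\sum_k\lambda\norm{\square_kf}_\infty ,
\]
which is \eqref{eq:dilationM}.

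For fixed $\lambda>1$, the same counting applies with the roles of $j$ and $k$ exchanged. Each $k$ spreads over about $\lambda$ boxes $j$, which gives the bound $C\lambda^2$. Since $S_\lambda^{-1}=\lambda^{-1}S_{1/\lambda}$, this yields boundedness of both $S_\lambda$ and $S_\lambda^{-1}$.
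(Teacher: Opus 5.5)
Your proposal is correct. For the dilation bound \eqref{eq:dilationM}, which is the only part the paper actually proves (it cites the literature for the rest), you use the same argument: $S_\lambda\square_kf$ has Fourier support of width $O(\lambda)$ meeting boundedly many output boxes, its sup norm is $\lambda\|\square_kf\|_\infty$, and interchanging the two sums counts each input box only a bounded number of times; your self-contained treatment of completeness, the embedding, the algebra property and the $\langle t\rangle^{1/2}$ kernel bound follows the standard route. One harmless slip: since $S_\lambda S_\mu=S_{\lambda\mu}$ and $S_1=I$, the inverse is $S_\lambda^{-1}=S_{1/\lambda}$ rather than $\lambda^{-1}S_{1/\lambda}$, and this does not affect the boundedness conclusion.
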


\begin{proof}
We only prove \eqref{eq:dilationM} and refer to \cite{BGOR2007,Klaus2023} for a proof of the other statements, which are standard results. 
For $0<\lambda\le1$, the function
$\lambda(\square_jf)(\lambda\cdot)$ has Fourier support in
$[\lambda j-2\lambda,\lambda j+2\lambda]$ and therefore meets at most a
fixed number of output unit boxes, independently of $j$ and $\lambda$.
Its $L^\infty$ norm is at most
$\lambda\norm{\square_jf}_\infty$.  Although many input boxes may fall into
one output box, summing first over output boxes and then over input boxes
counts each input box only a fixed number of times.  This proves
\eqref{eq:dilationM}.  For fixed $\lambda>1$, an input interval meets
$O(1+\lambda)$ output boxes, which gives a finite constant depending on
$\lambda$.  Applying this statement also to $1/\lambda$ proves boundedness
of both $S_\lambda$ and its inverse.
\end{proof}

Lemma~\ref{lem:modulation} makes the Duhamel map contractive on a ball whose
lifespan depends only on the size of the initial datum.  The resulting
continuation criterion is the bridge from the later determinant estimate to
global existence.

\begin{proposition}[Local theory]\label{prop:local}
For every $A>0$ there is $\delta=\delta(A)>0$ such that, if
$\norm{q_0}_M\le A$, equation \eqref{eq:nls} has a unique mild solution on
$[-\delta,\delta]$.  On this data ball the solution map is locally Lipschitz
into $C([-\delta,\delta];M)$.
If $(T_-,T_+)$ is the maximal lifespan, then
\begin{equation}\label{eq:blowupalternative}
 T_+<\infty\quad\Longrightarrow\quad
 \limsup_{t\uparrow T_+}\norm{q(t)}_M=\infty,
\end{equation}
and similarly at $T_-$.
\end{proposition}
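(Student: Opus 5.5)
The plan is the standard Picard iteration for the Duhamel map
\[
 \Phi(q)(t)=e^{it\partial_x^2}q_0-2i\int_0^te^{i(t-s)\partial_x^2}\bigl(|q|^2q\bigr)(s)\dd s
\]
on the closed ball $X_\delta=\{q\in C([-\delta,\delta];M):\sup_t\norm{q(t)}_M\le 2C_0A\}$, where $C_0=C\langle1\rangle^{1/2}$ comes from \eqref{eq:SchrM}. We take $\delta\le1$.

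\textbf{Step 1 (self-map and contraction).} By \eqref{eq:SchrM}, $\norm{e^{it\partial_x^2}q_0}_M\le C_0A$ for $|t|\le1$. Complex conjugation preserves $M$ with equal norm, since $\varphi$ is real and even, so $\overline{\square_j f}=\square_{-j}\bar f$. Hence \eqref{eq:Malgebra} gives $\norm{|q|^2q}_M\le C^2\norm q_M^3$. The same estimate applied to the trilinear difference gives
\[
 \norm{|q|^2q-|p|^2p}_M\le 3C^2\bigl(\norm q_M+\norm p_M\bigr)^2\norm{q-p}_M .
\]
Therefore $\Phi$ maps $X_\delta$ into itself and is a contraction once $\delta\lesssim (C_0A)^{-2}$, with constants depending only on $A$. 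For the integrand to make sense we need $s\mapsto e^{i(t-s)\partial_x^2}F(s)$ to be continuous in $M$ when $F\in C(I;M)$. This follows from the strong continuity of the group together with the uniform bound \eqref{eq:SchrM}, by a $3\varepsilon$ argument, so the integral is a Riemann integral in the Banach space $M$. The same argument shows $\Phi(q)\in C([-\delta,\delta];M)$.

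\textbf{Step 2 (uniqueness and Lipschitz dependence).} Uniqueness in the ball is immediate from the contraction. Unconditional uniqueness in $C(I;M)$ needs a separate argument. If $q,p$ are two mild solutions on $I$, both are bounded by some $B$ on a compact subinterval. The difference estimate of Step 1 then gives
\[
 \norm{(q-p)(t)}\lesssim_B \Bigl|\int_0^t\norm{(q-p)(s)}\dd s\Bigr|,
\]
and Gronwall yields $q=p$. For Lipschitz dependence, take two data $q_0,p_0$ in the $A$-ball with solutions $q,p$. Then
\[
 \sup_t\norm{q-p}\le C_0\norm{q_0-p_0}_M+\tfrac12\sup_t\norm{q-p}
\]
by the contraction, which gives the Lipschitz bound $2C_0$.

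\textbf{Step 3 (maximal solution and blow-up alternative).} Define the maximal lifespan as the union of all intervals carrying a mild solution. Unconditional uniqueness makes solutions on overlapping intervals agree, so they glue to a single solution. Time-translation invariance of the Duhamel formula (restarting at $t_0$ with datum $q(t_0)$, via the group property) lets the local result be reapplied at any time. Now suppose $T_+<\infty$ but $\norm{q(t)}_M\le A'$ for $t$ near $T_+$. Restart at $t_0=T_+-\delta(A')/2$. This extends the solution past $T_+$, and the concatenation is again a mild solution, which contradicts maximality. The argument at $T_-$ is symmetric.

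The only mildly delicate point is the continuity of the Duhamel integral in $M$, which is needed because the group is only strongly continuous and not norm continuous. The uniform bound \eqref{eq:SchrM} handles it.
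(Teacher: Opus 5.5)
Your proposal is correct and follows essentially the same route as the paper. The paper also contracts the Duhamel map on a ball of radius comparable to $A$ with $\delta\le1$ and reads off Lipschitz dependence from the same difference estimate. It gets unconditional uniqueness by applying that estimate on bounded compact subintervals, where your Gronwall step replaces its iteration over short intervals, and it proves the blow-up alternative by restarting the local theory within $\delta(A')$ of $T_+$ and gluing by uniqueness. The only cosmetic point is that the restart time must lie in the left neighborhood $(T_+-\epsilon,T_+)$ where $\|q(t)\|_M\le A'$ holds, for instance $t_0=T_+-\tfrac12\min(\epsilon,\delta(A'))$; this changes nothing in the argument.
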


\begin{proof}
Write $C_tM=C([-\delta,\delta];M)$.  On this space use the right side of
\eqref{eq:duhamel} as the fixed-point map $\Phi_{q_0}$.  Lemma~\ref{lem:modulation} gives,
on the ball of radius
$2C A$,
\[
 \norm{\Phi_{q_0}(q)}_{C_tM}\le CA+C\delta\norm q_{C_tM}^3
\]
and, for two data and two functions in the same ball,
\[
 \begin{aligned}
 \norm{\Phi_{q_0}(q)-\Phi_{\widetilde q_0}(\widetilde q)}_{C_tM}
 &\le C\norm{q_0-\widetilde q_0}_M\\
 &\quad+C\delta(\norm q_{C_tM}^2+\norm{\widetilde q}_{C_tM}^2)
       \norm{q-\widetilde q}_{C_tM}.
 \end{aligned}
\]
Here the same estimates hold for positive and negative times, because
\eqref{eq:SchrM} depends only on $|t|$ and a backward Duhamel integral is
estimated by its absolute length.  We decrease $\delta$ so that
$\delta\le1$.  The contraction theorem proves existence and uniqueness in
the chosen ball, as well as local Lipschitz dependence.  Any two mild
solutions are bounded on a sufficiently short common subinterval; applying
the same difference estimate there and iterating gives uniqueness in the
full class $C_tM$.  The same argument starting at any time gives a
lifespan depending only on the norm at that time.  This proves
\eqref{eq:blowupalternative}: if
$\norm{q(t_r)}_M\le A$ along $t_r\uparrow T_+<\infty$, local theory at each
$t_r$ gives a solution on $[t_r-\delta(A),t_r+\delta(A)]$.  Choose $r$ with
$t_r+\delta(A)>T_+$.  Uniqueness identifies this new solution with the old
one on their overlap, thereby extending the maximal solution past $T_+$, a
contradiction.
\end{proof}

Integer Galilean transformations align the equation with the unit frequency
boxes, while dilation makes the data small and rescales the time interval and
the observation window.  The two NLS symmetries used later are
\begin{align}
 (\cG_nq)(t,x)
 &=e^{-i(nx+n^2t)}q(t,x+2nt),\qquad n\in\Z,
 \label{eq:Galilean}\\
 p(s,x)&=\lambda q(\lambda^2s,\lambda x),\qquad \lambda>0.
 \label{eq:NLSscaling}
\end{align}
Both functions solve \eqref{eq:nls} whenever $q$ does; this follows by direct
substitution for smooth solutions and then from the Duhamel formula for mild
solutions.  In particular, the second transformation has initial value
$p(0)=S_\lambda q_0$.

The conservation law will first be proved for smooth solutions and then
passed to endpoint data.  We therefore record the required approximation and
regularity-persistence facts here, before they are used.

\begin{lemma}[Finite-frequency approximation]
\label{lem:frequency-truncation}
For $q\in M$ define the frequency cutoff
\begin{equation}\label{eq:finite-frequency-truncation}
 q^{(M_0)}=\sum_{|j|\le M_0}\square_jq.
\end{equation}
Then $q^{(M_0)}\to q$ in $M$ as $M_0 \to \infty$, and the cutoff maps are uniformly bounded on
$M$: for a constant $C_\varphi$ depending only on the fixed partition,
\begin{equation}\label{eq:truncation-uniform-bound}
 \sup_{M_0}\norm{q^{(M_0)}}_M\le C_\varphi\norm q_M.
\end{equation}
Every $q^{(M_0)}$ is smooth and bounded together with all its derivatives.
The NLS solutions with these initial data converge to the solution with datum
$q$ in $C(J;M)$ on their common contraction interval $J$, and on any longer
compact interval on which they have a common $M$ bound.
\end{lemma}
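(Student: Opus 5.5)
The argument has four parts: identify $q^{(M_0)}$ as a Fourier multiplier, prove the uniform bound, deduce convergence by a tail estimate, and get convergence of solutions from the Lipschitz theory of Proposition~\ref{prop:local}.

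\textbf{Multiplier form and uniform bound.} We have $q^{(M_0)}=\Psi_{M_0}(D)q$ with symbol $\Psi_{M_0}(\xi)=\sum_{|j|\le M_0}\varphi(\xi-j)$. Since $\supp\varphi\subset(-2,2)$, the composition $\square_k\square_j$ vanishes unless $|k-j|\le 3$. Hence
\[
 \square_k q^{(M_0)}=\sum_{|j|\le M_0,\ |j-k|\le3}\square_k\square_j q .
\]
Each $\square_k$ is convolution with $\mathcal F^{-1}(\varphi(\cdot-k))$, a modulated copy of $\mathcal F^{-1}\varphi$, so its $L^1$ norm is independent of $k$. Therefore $\norm{\square_k\square_jq}_\infty\le\norm{\mathcal F^{-1}\varphi}_{L^1}\norm{\square_jq}_\infty$. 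Summing over $k$ counts each $j$ at most $7$ times, which gives \eqref{eq:truncation-uniform-bound} with $C_\varphi=7\norm{\mathcal F^{-1}\varphi}_{L^1}$.

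\textbf{Convergence in $M$.} The same computation applied to $q-q^{(M_0)}=\sum_{|j|>M_0}\square_jq$ bounds $\norm{q-q^{(M_0)}}_M$ by $C_\varphi\sum_{|j|>M_0}\norm{\square_jq}_\infty$. This is the tail of a convergent series, so it tends to $0$.

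\textbf{Smoothness.} The symbol $\Psi_{M_0}$ is compactly supported and smooth, so $q^{(M_0)}$ has compactly supported Fourier transform. Each $\partial_x^k q^{(M_0)}$ is a finite sum of terms $\partial_x^k\square_jq$. Writing $\partial_x^k\square_j=\partial_x^k\widetilde\square_j\square_j$, where $\widetilde\square_j$ has a symbol equal to $1$ on $\supp\varphi(\cdot-j)$ and with compact support, Young's inequality gives
\[
 \norm{\partial_x^k\square_jq}_\infty\lesssim_{k,j}\norm{\square_jq}_\infty .
\]
So $q^{(M_0)}$ is smooth and bounded together with all its derivatives.

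\textbf{Convergence of solutions.} Let $A=C_\varphi\norm q_M$. Every datum $q^{(M_0)}$ and $q$ itself lie in the data ball of radius $A$, because $C_\varphi\ge1$ (take $M_0\to\infty$ in \eqref{eq:truncation-uniform-bound}). Proposition~\ref{prop:local} then gives a common contraction interval $J=[-\delta(A),\delta(A)]$ on which the solution map is Lipschitz. Hence the solutions converge in $C(J;M)$ by the previous step.

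For a longer compact interval $I$ on which all the solutions share a bound $\norm{\cdot}_M\le B$, cover $I$ by finitely many steps of length $\delta(B)$. On each step, restart the local theory at the left endpoint, where the data converge by the preceding step. The Lipschitz estimate from the proof of Proposition~\ref{prop:local}, applied on the ball of radius $2CB$, propagates convergence to the end of the step. A finite induction finishes the argument; uniqueness ensures the restarted solutions agree with the original ones.

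The only step needing care is this last iteration. One must observe that the local existence time and the Lipschitz constant depend only on the common bound $B$, not on the data themselves, so the number of steps is finite and fixed independently of $M_0$.
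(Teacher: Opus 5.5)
Your proposal is correct and follows the paper's route. The paper uses the same ingredients: bounded overlap of the boxes ($\square_k\square_j=0$ for $|k-j|\ge4$) for both the uniform bound and the vanishing $\ell^1$ tail, Bernstein's inequality for smoothness, and the difference estimate of Proposition~\ref{prop:local}, iterated over finitely many steps whose length depends only on the common bound. You supply details the paper leaves implicit: the explicit constant $C_\varphi$, the check that all data lie in one ball, and the identification of restarted solutions via uniqueness.
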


\begin{proof}
The first assertion is the vanishing of the $\ell^1$ tail in
\eqref{eq:Mdef}; the bounded overlap of the boxes changes only a fixed
constant and also gives \eqref{eq:truncation-uniform-bound}.  Compact Fourier
support and the convolution form of Bernstein's inequality give, for every
integer $r\ge0$,
\[
 \norm{\partial_x^rq^{(M_0)}}_\infty
 \le C_{r,M_0}\sum_{|j|\le M_0}\norm{\square_jq}_\infty<\infty.
\]
The difference estimate in Proposition~\ref{prop:local}
gives convergence on a time interval depending only on a common bound for
the data.  Iterating the same estimate proves the longer-interval assertion.
\end{proof}

For completeness, note that smoothness persists for as long as the $M$-solution
exists:  If, for $s\ge0$,
$\norm f_{M_s}=\sum_j\langle j\rangle^s\norm{\square_jf}_\infty$, then
\[
 \langle j_1+j_2+j_3\rangle^s
 \le C_s\sum_{r=1}^3\langle j_r\rangle^s
\]
and the same frequency convolution as in \eqref{eq:Malgebra} give
\[
 \norm{fgh}_{M_s}\le C_s\bigl(
 \norm f_{M_s}\norm g_M\norm h_M
 +\norm f_M\norm g_{M_s}\norm h_M
 +\norm f_M\norm g_M\norm h_{M_s}\bigr).
\]
The proof of \eqref{eq:SchrM} does not move frequency supports, so the same
argument after multiplication by $\langle j\rangle^s$ gives
$\norm{e^{it\partial_x^2}f}_{M_s}\lesssim_s
\langle t\rangle^{1/2}\norm f_{M_s}$.
On every interval on which $\norm q_M$ is bounded, the Duhamel estimate in
$M_s$ and subdivision into sufficiently short intervals give a Gronwall
bound for $\norm q_{M_s}$.  Thus no $M_s$ norm can blow up before the
$M$-lifespan ends.  All classical identities used below therefore apply to
the finite-frequency approximants throughout their lifespan.

Finite frequency support gives smooth bounded functions, but not spatial
decay.  The differential conservation identities below are therefore first
checked locally (equivalently, after spatial mollification).  Whenever an
integration by parts in $x$ is used, one may insert a compactly supported
cutoff, do the calculation, and pass to the stated windowed or distributional
identity.  The uniformly local estimates of Section~\ref{sec:windows}
control the terms in this passage; no decay at spatial infinity is assumed.

\section{Windows and uniformly local spaces}\label{sec:windows}

Functions in $M^{\infty,1}$ need not decay, so the global $L^2$ quantities
used in the determinant argument may be infinite.  We therefore localize
with unnormalized spatial windows and develop a functional calculus on
uniformly local $L^2$; these tools make the later graded features meaningful
for endpoint data.

Fix throughout a positive, even function $\chi\in\mathcal S(\R)$ which is
decreasing on $[0,\infty)$, bounded below on $[-2,2]$, and satisfies
\begin{equation}\label{eq:windowderivatives}
 |\chi'(x)|+|\chi''(x)|\le C_\chi\chi(x).
\end{equation}
Such a window exists: for example,
$\chi(x)=e^{-\sqrt{1+x^2}}$ has all these properties.  We also fix a
positive Schwartz window $\widetilde\chi$ which is bounded
below on a sufficiently large fixed neighborhood of the origin.  It is
used when a unit-scale multiplier slightly enlarges a localization; its
precise choice is immaterial.  We write
$\widetilde\chi_y(x)=\widetilde\chi(x-y)$.
Only positivity on a fixed core, rapid decay, and
\eqref{eq:windowderivatives} are used for $\chi$; monotonicity is a convenient
way to organize the coverings.  The window $\widetilde\chi$ serves only for
fixed unit-scale enlargements and may be replaced by a sufficiently wide
translate-independent multiple of $\chi$.  Neither window is the frequency
cutoff $\psi$ used in Section~\ref{sec:main}.
For $R\ge1$ and $y\in\R$ set
\begin{equation}\label{eq:window}
 \chi_{R,y}(x)=\chi\left(\frac{x-y}{R}\right).
\end{equation}
The window is deliberately not normalized by a power of $R$.
For a function $F$ with values in a Hilbert space $H$, put
\begin{align*}
 \norm F_{U_R}&=\sup_y\norm{\chi_{R,y}F}_{L^2},\\
 \norm F_{I_R}&=\sup_y\norm{\one_{[y-R,y+R]}F}_{L^2}.
\end{align*}
We write $L^2_{\uloc}(\R;H)$ for the space on which
$\norm F_{L^2_{\uloc}}:=\norm F_{U_1}$ is finite.

The first lemma of this section shows that smooth and sharp uniformly local norms are
equivalent and, frequency by frequency, identifies local $L^2$ mass with the
modulation norm.  This is how the determinant features will eventually
recover $\norm q_M$.

\begin{lemma}[Window comparison]\label{lem:windows}
Uniformly for $R\ge1$,
\begin{equation}\label{eq:windowcomparison}
 c\norm F_{I_R}\le\norm F_{U_R}\le C\norm F_{I_{2R}},
 \qquad
 \norm F_{U_R}^2\le CR\norm F_{U_1}^2.
\end{equation}
Moreover, for a scalar function $f$,
\begin{equation}\label{eq:modulation-uloc}
 \sum_j\sup_y\norm{\chi_{1,y}\square_jf}_{L^2}
 \asymp_\chi\norm f_M.
\end{equation}
\end{lemma}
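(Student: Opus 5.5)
The plan is to prove the window inequalities \eqref{eq:windowcomparison} by covering arguments that use only the stated properties of $\chi$, and then to obtain \eqref{eq:modulation-uloc} from Bernstein-type estimates for frequency-localized functions.

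\textbf{Lower bound.} Since $\chi$ is bounded below by some $c_0>0$ on $[-2,2]$, we have $\chi_{R,y}\ge c_0$ on $[y-2R,y+2R]\supset[y-R,y+R]$. Hence
\[
 \norm{\one_{[y-R,y+R]}F}_{L^2}\le c_0^{-1}\norm{\chi_{R,y}F}_{L^2}.
\]
Taking the supremum over $y$ gives $c\norm F_{I_R}\le\norm F_{U_R}$.

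\textbf{Upper bound.} Split $\R$ into the intervals $[y+2kR,y+2(k+1)R]$, $k\in\Z$, each of length $2R$. On the $k$-th interval, monotonicity of $\chi$ on $[0,\infty)$ gives
\[
 \chi_{R,y}\le\sup_{|s|\ge 2(|k|-1)}\chi(s)=:\chi^*_k,
\]
and $\chi^*_k$ is independent of $R$. Each interval lies in some $[z-2R,z+2R]$, so
\[
 \norm{\chi_{R,y}F}_{L^2}^2\le\sum_k(\chi^*_k)^2\norm F_{I_{2R}}^2.
\]
This series converges because $\chi$ is Schwartz, which gives $\norm F_{U_R}\le C\norm F_{I_{2R}}$.

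\textbf{The $\sqrt R$ bound.} The interval $[y-R,y+R]$ is covered by $O(R)$ unit intervals, and the lower bound at $R=1$ controls each of them by $\norm F_{U_1}$. Hence $\norm F_{I_R}^2\le CR\norm F_{U_1}^2$. Applying the upper bound with $2R$ in place of $R$, which is harmless, gives $\norm F_{U_R}^2\le CR\norm F_{U_1}^2$.

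\textbf{The easy direction of \eqref{eq:modulation-uloc}.} Since $\norm{\chi_{1,y}}_{L^2}=\norm\chi_{L^2}$, we have $\norm{\chi_{1,y}\square_jf}_{L^2}\le\norm\chi_{L^2}\norm{\square_jf}_\infty$. Summing over $j$ bounds the left side by $C\norm f_M$.

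\textbf{The reverse direction (main step).} We need $\norm{\square_jf}_\infty\lesssim\sup_y\norm{\chi_{1,y}\square_jf}_{L^2}$ uniformly in $j$. Let $\tilde\varphi\in C_c^\infty$ equal $1$ on $(-2,2)$, and put $\Phi_j=\mathcal F^{-1}\tilde\varphi(\cdot-j)$. Then $\square_jf=\Phi_j*\square_jf$, and $|\Phi_j|=|\Phi_0|$ is a Schwartz function independent of $j$, since modulation does not change the modulus. Splitting into unit intervals and using Cauchy--Schwarz,
\[
 |\square_jf(x)|\le\sum_k\int_{x-k}^{x-k+1}|\Phi_0(x-z)|\,|\square_jf(z)|\dd z
 \le\sum_k\sup_{|s-k|\le1}|\Phi_0(s)|\cdot\norm{\one_{[x-k,x-k+1]}\square_jf}_{L^2}.
\]
The lower bound at $R=1$ controls each local $L^2$ norm by $\sup_y\norm{\chi_{1,y}\square_jf}_{L^2}$, and the sum over $k$ of $\sup_{|s-k|\le1}|\Phi_0(s)|$ is finite. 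This gives the desired uniform bound.

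One point needs care: $f$ is only tempered, so the convolution identity must be justified. This is fine because $\square_jf$ is a smooth function of polynomial growth (compact Fourier support) and $\Phi_j$ is Schwartz, so the identity holds pointwise. If the local norms on the right are infinite, the inequality is trivial. Summing the pointwise bound over $j$ completes the proof of \eqref{eq:modulation-uloc}.
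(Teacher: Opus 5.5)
Your proof is correct and follows essentially the same route as the paper: the lower bound comes from positivity of $\chi$ on a fixed core, the upper bound from decomposing $\R$ into intervals of length comparable to $R$ weighted by a summable, $R$-independent envelope of $\chi$, and the reverse direction of \eqref{eq:modulation-uloc} from the reproducing identity $\square_jf=\Phi_j*\square_jf$ with a modulated Schwartz kernel and Cauchy--Schwarz on unit intervals. The differences from the paper are cosmetic: you use intervals of length $2R$ instead of $R$, the monotone envelope in place of $C_N\langle k\rangle^{-N}$, and you obtain the $\norm F_{U_R}^2\le CR\norm F_{U_1}^2$ bound by composing the upper bound with a unit-interval covering instead of a direct weighted sum.
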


\begin{proof}
Let $c_0=\min_{|s|\le1}\chi(s)>0$.  Then
$\chi_{R,y}\ge c_0$ on $[y-R,y+R]$, which proves the lower bound in
\eqref{eq:windowcomparison}.  For the reverse bound, use the intervals
$J_k=[y+kR,y+(k+1)R]$.  Each $J_k$ is contained in an interval of radius
$2R$, so its $L^2$ mass is at most $\norm F_{I_{2R}}^2$.  For every $N>2$,
\[
 \sup_{x\in J_k}\chi_{R,y}(x)^2\le C_N\langle k\rangle^{-N}.
\]
Summation in $k$ gives the desired estimate, with constants independent of
$R$ and $y$.  A decomposition into unit intervals gives
\[
 \int\chi_{R,y}^2|F|^2
 \le C\norm F_{U_1}^2
       \sum_{r\in\Z}\left\langle\frac rR\right\rangle^{-N}
 \le CR\norm F_{U_1}^2.
\]
This proves the window bounds.  To prove \eqref{eq:modulation-uloc}, put
$f_j=\square_jf$ and choose $\psi\in C_c^\infty$ equal to one on
$\supp\varphi$.  If
\[
 K_j=\mathcal F^{-1}\bigl(\psi(\cdot-j)\bigr),
\]
then $f_j=K_j*f_j$ and $K_j(x)=e^{ijx}K_0(x)$.  For
$I_k(x)=[x+k,x+k+1]$, the uniform Schwartz bounds for $K_j$ give, for
$N>2$,
\begin{align*}
 |f_j(x)|
 &\le \sum_{k\in\Z}
   \norm{K_j(x-\cdot)}_{L^2(I_k(x))}
   \norm{f_j}_{L^2(I_k(x))}\\
 &\le C_N\sum_{k\in\Z}\langle k\rangle^{-N}
   \sup_y\norm{\chi_{1,y}f_j}_2
 \le C\sup_y\norm{\chi_{1,y}f_j}_2.
\end{align*}
Here the lower bound for $\chi$ on a fixed neighborhood of the origin was
used in the second line.  Conversely,
\[
 \sup_y\norm{\chi_{1,y}f_j}_2
 \le \norm\chi_2\norm{f_j}_\infty.
\]
Summing these two inequalities in $j$ proves
\eqref{eq:modulation-uloc}.
\end{proof}

Later we apply rational functions of first-order graded operators to sources
which are only uniformly locally square integrable.  Finite propagation lets
us define these multipliers by spatial exhaustion without imposing decay.

\begin{lemma}[Uniformly local functional calculus]\label{lem:ulocfc}
Let $H$ be a separable complex Hilbert space and let $\mathcal B(H)$ denote
its bounded operators.  Let $V\in\mathcal B(H)$ be a fixed self-adjoint
operator satisfying
$0\le V\le I_H$ in the quadratic-form sense, where $I_H$ is the identity on
$H$.  Let $W:\R\to\mathcal B(H)$ be strongly measurable and essentially
bounded, with $W(x)=W(x)^*$ almost everywhere, and let
\[
 A=-iV\partial_x+W(x).
\]
Then the group $e^{-itA}$ has
propagation speed at most one, in the sense that for all Borel sets
$E,K\subset\R$,
\begin{equation}\label{eq:finite-propagation}
 \one_Ee^{-itA}\one_K=0
 \qquad\text{whenever}\qquad \operatorname{dist}(E,K)>|t|,
\end{equation}
and satisfies the bound
\begin{equation}\label{eq:ulocpropagation}
 \norm{e^{-itA}F}_{I_R}
 \le C\left(1+\frac{|t|}{R}\right)^{1/2}\norm F_{I_R}.
\end{equation}
If
\[
 a(s)=\int_\R e^{-its}\dd\mu_a(t)
\]
for a finite measure $\mu_a$ satisfying
\[
 \int_\R(1+|t|)^{1/2}\dd|\mu_a|(t)<\infty,
\]
then the global $L^2$ operator $a(A)$ has a canonical bounded realization
on $L^2_{\uloc}$.  It is defined by cutoff exhaustion, is independent of
that exhaustion, and satisfies
\begin{equation}\label{eq:uloccalculus}
 \norm{a(A)F}_{I_R}
 \le C\int_\R\left(1+\frac{|t|}{R}\right)^{1/2}
       \dd|\mu_a|(t)\norm F_{I_R}.
\end{equation}
The same estimates hold with $I_R$ replaced by $U_R$, with input and output
radii changed by at most a fixed factor independent of $R$.
\end{lemma}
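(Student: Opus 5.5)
The plan has four steps: establish finite propagation by a localized energy identity, deduce the uniformly local group bound by splitting the input into pieces, define $a(A)$ on $L^2_{\uloc}$ through the Fourier representation of $a$, and finally pass from sharp to smooth windows.

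\textbf{Self-adjointness and finite propagation.} First, $A$ is self-adjoint on $L^2(\R;H)$. This holds because $-iV\partial_x$ is self-adjoint: it is the Fourier multiplier $V\xi$ acting on $L^2(\R_\xi;H)$, with $V$ a bounded self-adjoint operator. The potential $W$ is a bounded self-adjoint perturbation, so Kato--Rellich applies. For finite propagation I would use the standard energy argument. Let $u(t)=e^{-itA}F$ with $F\in L^2$ supported in $K$. First take $F$ in the domain of $A$, then argue by density. For a Lipschitz weight $\eta(x)$ with $|\eta'|\le1$, consider
\[
 \frac{d}{dt}\int \phi\bigl(\eta(x)-t\bigr)\norm{u}_H^2\dd x .
\]
The $W$ term drops out since $W$ is self-adjoint. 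The $V\partial_x$ term gives, after integration by parts, the quantity $-\int\phi'\,\eta'\langle Vu,u\rangle$. Because $0\le V\le I_H$, this is bounded by $\int|\phi'|\,\norm u_H^2$. We choose $\phi$ nondecreasing and $\eta=\operatorname{dist}(\cdot,K)$. Then the derivative of the energy outside the light cone is nonpositive, and the energy starts at $0$. Hence $u(t)=0$ where $\operatorname{dist}(x,K)>|t|$, which is \eqref{eq:finite-propagation}.

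\textbf{The uniformly local group bound \eqref{eq:ulocpropagation}.} Fix $y$ and the interval $[y-R,y+R]$. Split $F$ into the piece on $[y-R-|t|,\,y+R+|t|]$ and the remainder. The remainder does not reach the interval, by \eqref{eq:finite-propagation}. The first piece is covered by $O(1+|t|/R)$ intervals of radius $R$, so its $L^2$ mass squared is at most $C(1+|t|/R)\norm F_{I_R}^2$. Unitarity of the group then gives the square-root factor.

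\textbf{Canonical realization of $a(A)$.} For $F\in L^2_{\uloc}$ define $a(A)F=\lim_{k}a(A)(\one_{[-k,k]}F)$ locally in $L^2$. On a fixed interval $E$, the contribution of $\one_{[-k,k]^c}F$ comes through $\int e^{-itA}\dd\mu_a(t)$, and by finite propagation only times $|t|\ge\operatorname{dist}(E,[-k,k]^c)$ contribute. Using \eqref{eq:ulocpropagation} with $R=1$, this contribution is bounded by a tail of $\int(1+|t|)^{1/2}\dd|\mu_a|$, and that tail tends to $0$. The sequence is therefore locally Cauchy, so the limit exists. It is independent of the exhaustion, since any two exhaustions agree on growing sets. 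It coincides with the $L^2$ operator on $L^2$. The bound \eqref{eq:uloccalculus} follows by integrating \eqref{eq:ulocpropagation} against $|\mu_a|$ for the truncations and passing to the limit; the truncations do not increase $\norm\cdot_{I_R}$. Finally, the $U_R$ versions follow from Lemma~\ref{lem:windows}, which compares $U_R$ with $I_R$ and $I_{2R}$ and accounts for the stated change of radii.

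\textbf{Main obstacle.} The delicate step is the rigorous energy identity under the weak hypotheses: $W$ is only strongly measurable and $V$ may be degenerate. I would handle it by working with $F$ in the domain of $A$, where $u\in C^1(\R;L^2)$ and $\partial_x(Vu)\in L^2$. I would also mollify the weight $\eta$ so that the integration by parts in $x$ is justified. No regularity of $W$ is needed, since $W$ enters only through the term $\Ima\langle Wu,u\rangle=0$, which vanishes pointwise.
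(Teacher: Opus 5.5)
Your proposal is correct and follows essentially the same route as the paper. The shared steps are: a localized energy identity with the light-cone weight $\phi(\operatorname{dist}(x,K)-t)$, in which the $W$ term drops out by self-adjointness and the $V$ term is controlled by $0\le V\le I_H$; splitting the input plus unitarity for the $I_R$ bound; cutoff exhaustion combined with finite propagation and the $(1+|t|)^{1/2}$-weighted tail of $|\mu_a|$ to build $a(A)$ on $L^2_{\uloc}$; and Lemma~\ref{lem:windows} for the $U_R$ version.

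There are two harmless slips. The transport term is actually $+\int\phi'\eta'\langle Vu,u\rangle_H$ rather than with a minus sign, which does not matter since you only use its absolute value against $-\int\phi'\|u\|_H^2$. Negative times should also be treated explicitly, for example by taking adjoints as the paper does.
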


\begin{proof}
We use the convention $\langle u,v\rangle=\int\overline u\,v$, so the
Hilbert-space inner product is linear in its second argument.  For $A$ the free part is the self-adjoint Fourier multiplier $\xi V$ with
domain $\{F:\xi V\widehat F\in L^2\}$, and $A$ is self-adjoint on this
domain by the bounded perturbation theorem; see
\cite[Chapter~VIII]{ReedSimon1980}. The multiplier
$\xi V$ is self-adjoint because its symbol is
self-adjoint for every real $\xi$; the possible kernel of $V$ merely produces
a zero multiplier on that subspace and causes no domain problem.  The bounded
perturbation theorem therefore applies even when $V$ is not invertible.

Let $u(t)=e^{-itA}F$.  For every bounded real Lipschitz function $\phi$,
\begin{equation}\label{eq:local-energy}
 \frac{\dd}{\dd t}\int\phi(x)|u(t,x)|_H^2\dd x
 =\int\phi'(x)\ip{Vu(t,x)}{u(t,x)}_H\dd x.
\end{equation}
For domain data and a smooth bounded $\phi$, this follows from
$u_t=-V u_x-iWu$ by integration by parts; the $W$ terms cancel because
$W=W^*$.  More explicitly,
$2\operatorname{Re}\langle u,-iWu\rangle_H=0$, while
$-2\operatorname{Re}\langle u,Vu_x\rangle_H
=-\partial_x\langle Vu,u\rangle_H$.
Here the integration by parts is justified in the graph domain as follows.
For $v\in\operatorname{Dom}(A_0)$, first convolve in $x$ with a smooth
compactly supported approximate identity $\rho_\epsilon$.  Both
$v*\rho_\epsilon\to v$ and
$V\partial_x(v*\rho_\epsilon)=(V\partial_xv)*\rho_\epsilon
\to V\partial_xv$ in $L^2$.  For fixed $\epsilon$, multiply by
$\eta(x/L)$, where $\eta\in C_c^\infty$ equals one near the origin.
As $L\to\infty$, this converges in the same graph norm: the additional
derivative term is bounded in $L^2$ by
$L^{-1}\|\eta'\|_\infty\|V\|\|v*\rho_\epsilon\|_2$.
Choosing $\epsilon\downarrow0$ and then sufficiently large $L$ gives
smooth compactly supported approximants.  Boundedness of $W$ makes the
graph norms of $A$ and $A_0$ equivalent, so the approximation also works
for $A$.  Multiplication by a bounded Lipschitz $\phi$ preserves this
domain, with
$V\partial_x(\phi v)=\phi V\partial_xv+\phi'Vv$.
Thus the integration-by-parts identity passes to every domain vector;
applied to $u(t)$ it proves \eqref{eq:local-energy} for domain data.

Approximate a bounded Lipschitz $\phi$ by smooth mollifications, with
uniform bounds on both the functions and their derivatives.  The
derivatives converge almost everywhere, so dominated convergence gives
the same identity in its time-integrated form.  Finally approximate
arbitrary $F\in L^2$ by domain data.  Unitarity gives convergence uniformly
in time in $L^2$, and boundedness of $\phi$ and $\phi'V$ permits passage
to the integrated identity.  Its right-hand integrand is continuous in
time by strong continuity of the group, which recovers the displayed
derivative identity.  Only boundedness and self-adjointness of $W$ are
used; no regularity of the measurable coefficient is required.
Cutoffs converging to
the indicator of a backward light cone prove finite propagation as follows.
First suppose that $F$ is supported in a closed set $K$, put
$d_K(x)=\operatorname{dist}(x,K)$, and let $\eta$ be a smooth nondecreasing
approximation to $\one_{(0,\infty)}$.  For $s\ge0$ use the time-dependent
weight $\phi_s(x)=\eta(d_K(x)-s)$.  Combining
\eqref{eq:local-energy} with $\partial_s\phi_s=-\eta'(d_K-s)$ gives
\[
 \frac{\dd}{\dd s}\int\phi_s|u(s)|_H^2\dd x
 =\int\eta'(d_K-s)
   \left[d_K'(x)\ip{Vu}{u}_H-|u|_H^2\right]\dd x\le0,
\]
because $|d_K'|\le1$ and $0\le V\le I_H$.  The expression vanishes at
$s=0$; approximation of the distance function and of the step function
therefore shows that $u(s)$ vanishes where $d_K>s$.  Taking adjoints gives
the same conclusion for negative times, and density proves
\eqref{eq:finite-propagation}.

Now let $J=[y-R,y+R]$ and
$J^{|t|}=[y-R-|t|,y+R+|t|]$.  By
\eqref{eq:finite-propagation} and unitarity,
\begin{align*}
 \norm{\one_Je^{-itA}F}_2^2
 &=\norm{\one_Je^{-itA}(\one_{J^{|t|}}F)}_2^2
 \le\norm{\one_{J^{|t|}}F}_2^2\\
 &\le C\left(1+\frac{|t|}{R}\right)\norm F_{I_R}^2,
\end{align*}
where the last line follows by covering $J^{|t|}$ by
$O(1+|t|/R)$ intervals of length $2R$.  Taking the supremum over $y$
proves \eqref{eq:ulocpropagation}.

For compactly supported $F$ define
\[
 a(A)F=\int e^{-itA}F\dd\mu_a(t).
\]
Minkowski's inequality and \eqref{eq:ulocpropagation} give
\eqref{eq:uloccalculus}.  For general uniformly local $F$, truncate first
in $x$ and then in the $t$-integral.  Finite propagation makes the truncated
operators Cauchy on compact spatial sets, while the integrable measure tail
controls the remainder.  More precisely, let $\eta_L\in C_c^\infty$ satisfy
$0\le\eta_L\le1$, $\eta_L=1$ on $[-L,L]$, and
$\supp\eta_L\subset[-2L,2L]$.  Given a compact set $K$ and $T>0$, if
$L,L'$ are sufficiently
large, then \eqref{eq:finite-propagation} gives
\[
 \one_K\int_{|t|\le T}e^{-itA}
       (\eta_L-\eta_{L'})F\dd\mu_a(t)=0.
\]
On the other hand, uniformly in $L$,
\[
 \norm{\one_K\int_{|t|>T}e^{-itA}\eta_LF\dd\mu_a(t)}_2
 \le C_K\norm F_{I_1}
       \int_{|t|>T}(1+|t|)^{1/2}\dd|\mu_a|(t),
\]
and the right side tends to zero as $T\to\infty$.  This gives an
exhaustion-independent limit.  It is the unique realization compatible with
all such cutoff exhaustions; no density of compactly supported functions in
$L^2_{\uloc}$ is asserted.  Lemma~\ref{lem:windows} transfers the estimates
to $U_R$ and also shows that $I_1$ gives an equivalent definition of
$L^2_{\uloc}$.  In particular, the limit is an equivalence class in
$L^2_{\rm loc}$; addition and scalar multiplication commute with every
cutoff limit, so the realization is linear.  The constants in the $U_R$
version depend only on the fixed window and on
$\int(1+|t|)^{1/2}\dd|\mu_a|$, not on $R$ or $y$.  Since the sharp norm
$I_1$ is equivalent to $U_1$, this also shows directly that
$L^2_{\uloc}$ is a Banach space.
\end{proof}

The determinant multipliers are products of three resolvents.  We next show
that the factorization, inverse identities, and cutoff limits survive on
nondecaying uniformly local data. Note that the superscript $(2)$ in the Lemma records that
$R_{a,\epsilon}^{(2)}$ is initially the usual resolvent on global $L^2$;
it is dropped for its uniformly local extension.

\begin{lemma}[Factorized resolvents on uniformly local data]
\label{lem:uloc-resolvent-factorization}
Let $A=-iV\partial_x+W(x)$ be as in Lemma~\ref{lem:ulocfc}, let
$c\in\R$, and put $B=A+c$.  For $a>0$ and
$\epsilon\in\{-1,1\}$, the global $L^2$ resolvent
\[
 R_{a,\epsilon}^{(2)}=(a+i\epsilon B)^{-1}
\]
has a canonical cutoff-compatible bounded realization $R_{a,\epsilon}$ on
$L^2_{\uloc}(\R;H)$.   It has the following
properties.
\begin{enumerate}[label=\textup{(\roman*)}]
\item For every $f\in L^2_{\uloc}$,
\begin{equation}\label{eq:uloc-resolvent-inverse}
 (a+i\epsilon B)R_{a,\epsilon}f=f
 \quad\hbox{in }\mathcal D'(\R;H).
\end{equation}
Moreover, if $v\in L^2_{\uloc}$ and
$(a+i\epsilon B)v=0$ in $\mathcal D'(\R;H)$, then $v=0$.
\item The resolvents $R_{a,\epsilon}$ with the same $\epsilon$ commute.
If
\[
 h_{\kappa,\epsilon}(s)
 =\frac{8\kappa^2}
 {(2\kappa+i\epsilon s)(4\kappa+i\epsilon s)
  (6\kappa+i\epsilon s)},
\]
then
\begin{equation}\label{eq:uloc-h-factorization}
 h_{\kappa,\epsilon}(B)
 =8\kappa^2R_{2\kappa,\epsilon}R_{4\kappa,\epsilon}
 R_{6\kappa,\epsilon}
\end{equation}
on $L^2_{\uloc}$, and this multiplier is injective there.
\item If $\eta_L\in C_c^\infty$, $0\le\eta_L\le1$, equals one on
$[-L,L]$, and is supported in $[-2L,2L]$, then for every compact
$K\subset\R$,
\begin{align}
 R_{a,\epsilon}^{(2)}(\eta_Lf)&\longrightarrow R_{a,\epsilon}f,
 \label{eq:resolvent-exhaustion}\\
 h_{\kappa,\epsilon}(B)(\eta_Lf)&\longrightarrow
 h_{\kappa,\epsilon}(B)f
 \label{eq:h-exhaustion}
\end{align}
in $L^2(K;H)$.  The same statement holds for every finite product
of the resolvents and the limits are independent of the exhaustion.
\end{enumerate}
\end{lemma}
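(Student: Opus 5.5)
The plan is to write each resolvent as a Laplace transform of the group and then apply Lemma~\ref{lem:ulocfc}. Since $B=A+c$ is self-adjoint and $a>0$,
\[
 (a+i\epsilon B)^{-1}=\int_0^\infty e^{-as}e^{-i\epsilon sB}\dd s
 =\int_0^\infty e^{-as}e^{-i\epsilon sc}e^{-i\epsilon sA}\dd s .
\]
This has the form $\int e^{-itA}\dd\mu(t)$ with $\dd\mu=e^{-a|t|}e^{-ict}\one_{\epsilon t>0}\dd t$. This measure has exponential moments, so $\int(1+|t|)^{1/2}\dd|\mu|<\infty$. Lemma~\ref{lem:ulocfc} then gives the canonical cutoff-compatible realization $R_{a,\epsilon}$, the bound \eqref{eq:uloccalculus}, and the exhaustion limit \eqref{eq:resolvent-exhaustion} in $L^2(K;H)$.

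For finite products, I compose the realizations. The product measure for a product of resolvents is a convolution of such one-sided exponential measures, again with exponential moments. So a finite product is itself of the form $a(A)$ covered by Lemma~\ref{lem:ulocfc}. On compactly supported data, its realization agrees with the composition of the $L^2$ resolvents. Cutoff compatibility and exhaustion independence then identify the composition of the uloc realizations with the realization of the product. The one point to check is that finite propagation lets me replace the inner output by a cutoff version at the cost of errors that vanish on $K$.

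Commutation for equal $\epsilon$ and the factorization \eqref{eq:uloc-h-factorization} follow the same way. Both hold on $L^2$ by functional calculus, since $h_{\kappa,\epsilon}(s)=8\kappa^2\prod_{b}(b\kappa+i\epsilon s)^{-1}$. The Fourier representation of $h_{\kappa,\epsilon}$ is exactly the triple convolution, so its realization coincides with that of the product.

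For the inverse identity (i), I take $f_L=\eta_Lf$, for which $(a+i\epsilon B)R^{(2)}_{a,\epsilon}f_L=f_L$ holds in $L^2$ and hence in $\mathcal D'$. The operator $a+i\epsilon B=a+\epsilon V\partial_x+i\epsilon(W+c)$ is a first-order differential operator with bounded coefficients. By \eqref{eq:resolvent-exhaustion}, $R^{(2)}f_L\to R f$ in $L^2_{\rm loc}$, and pairing with test functions passes to the limit, giving \eqref{eq:uloc-resolvent-inverse}.

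Injectivity is the step I expect to be the main obstacle, because $v$ is only uniformly local and the global $L^2$ theory does not apply directly. I would run a weighted energy argument with the exponentially decaying weight $\phi(x)=e^{-\delta\langle x-y\rangle}$, where $\delta<a$ (the precise requirement is $\delta\|V\|<a$, and $\|V\|\le1$). Suppose $(a+i\epsilon B)v=0$. Then $\epsilon V\partial_xv=-(a+i\epsilon(W+c))v$, so $V\partial_x v\in L^2_{\rm loc}$ and the identity
\[
 2\Rea\langle\phi v,(a+i\epsilon B)v\rangle
 =2a\int\phi|v|^2-\epsilon\int\phi'\langle Vv,v\rangle
\]
is justified by mollification, as in the proof of \eqref{eq:local-energy}. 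The $W+c$ term drops because it is self-adjoint, and the integral converges because $v\in L^2_{\uloc}$ and $\phi$ decays exponentially. Since $|\phi'|\le\delta\phi$ and $0\le V\le I$, the right side is at least $(2a-\delta)\int\phi|v|^2$. The left side is $0$, so $v=0$.

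Injectivity of $h_{\kappa,\epsilon}(B)$ then follows from the factorization, since each factor is injective: if $Rf=0$ then $f=(a+i\epsilon B)Rf=0$. The exhaustion statement \eqref{eq:h-exhaustion} is the product case already treated.
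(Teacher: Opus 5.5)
Your proposal is correct and follows essentially the same route as the paper: the Laplace representation of the resolvent combined with finite propagation from Lemma~\ref{lem:ulocfc} gives the canonical realization, products, commutation and exhaustion; passage to the cutoff limit gives \eqref{eq:uloc-resolvent-inverse}; and an exponentially weighted energy identity with weight $e^{-\delta\langle x-x_0\rangle}$ gives uniqueness. The remaining differences are cosmetic and both versions are valid: you pair with $\phi v$ instead of $\omega^2 v$, and you deduce injectivity of $h_{\kappa,\epsilon}(B)$ from injectivity of each resolvent factor rather than by applying the three differential factors in succession.
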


\begin{proof}
Since $B$ is self-adjoint and $|a+i\epsilon s|\ge a$ for real $s$, the
global resolvent exists.  The scalar identity
\[
 (a+i\epsilon s)^{-1}=\int_0^\infty e^{-at}e^{-i\epsilon ts}\dd t
\]
and the spectral theorem give the Laplace representation
\begin{equation}\label{eq:resolvent-laplace}
 R_{a,\epsilon}^{(2)}
 =\int_0^\infty e^{-at}e^{-i\epsilon tB}\dd t
\end{equation}
as a strong operator integral on $L^2$.  Adding the scalar $c$, changing
the sign of time, and changing the bounded potential do not change
the propagation speed.  Lemma~\ref{lem:ulocfc} therefore gives, for every
compact interval $K$,
\begin{equation}\label{eq:local-group-bound-for-resolvent}
 \norm{\one_Ke^{-i\epsilon tB}F}_2
 \le C_K(1+|t|)^{1/2}\norm F_{U_1}.
\end{equation}

For $f\in L^2_{\uloc}$, define $R_{a,\epsilon}f$ as the local limit of
$R_{a,\epsilon}^{(2)}(\eta_Lf)$.  To verify convergence, suppose $L>L'$
and put
\[
 d_{L',K}=\operatorname{dist}
 \bigl(K,\supp(\eta_L-\eta_{L'})\bigr).
\]
Finite propagation, \eqref{eq:resolvent-laplace}, and
\eqref{eq:local-group-bound-for-resolvent} imply
\begin{align}
 &\norm{\one_KR_{a,\epsilon}^{(2)}
       ((\eta_L-\eta_{L'})f)}_2\notag\\
 &\qquad\le C_K\norm f_{U_1}
 \int_{d_{L',K}}^\infty e^{-at}(1+t)^{1/2}\dd t.
 \label{eq:resolvent-cutoff-tail}
\end{align}
The right side tends to zero uniformly for $L>L'$.  This proves the
existence and cutoff-independence of the limit, as well as
$\norm{R_{a,\epsilon}f}_{U_1}\le C_a\norm f_{U_1}$.

On global $L^2$,
$(a+i\epsilon B)R_{a,\epsilon}^{(2)}F=F$.  Pass to the cutoff limit.
The operator $B$, being first order with bounded zeroth-order
coefficients, is continuous from $L^2_{\rm loc}$ into vector-valued
distributions.  This proves \eqref{eq:uloc-resolvent-inverse}.  We also need
the converse uniqueness statement.  Suppose
$(a+i\epsilon B)v=0$ in distributions and $v\in L^2_{\uloc}$.  Fix
$x_0\in\R$ and choose
\[
 \omega(x)=e^{-\delta\langle x-x_0\rangle},\qquad 0<\delta<a.
\]
Then $\omega v\in L^2$.  Take the real part of the equation paired with
$\omega^2v$.  The self-adjoint zeroth-order terms contribute only
imaginary quantities, whereas integration by parts in the transport term
gives
\[
 a\int\omega^2|v|_H^2
 =\epsilon\int\omega\omega'\ip{Vv}{v}_H.
\]
Because $0\le V\le I$ and $|\omega'|\le\delta\omega$, the absolute value
of the right side is at most
$\delta\int\omega^2|v|_H^2$.  Hence $v=0$.  To justify the calculation for a
distributional solution, convolve the equation in $x$, test the regularized
equation against a compactly truncated $\omega^2v$, and pass first in the
mollification and then in the cutoff.  The constant coefficient $V$ commutes
with convolution, while both $W(v*\rho_\nu)$ and $(Wv)*\rho_\nu$ converge to
$Wv$ in $L^2_{\rm loc}$.  Finally, $v\in L^2_{\uloc}$ implies
$\omega v\in L^2$, so the limiting boundary terms vanish.
For a compact interval $K$, the two convergence statements are simply
\[
 \|W(v*\rho_\nu-v)\|_{L^2(K)}
 \le\norm W_\infty\|v*\rho_\nu-v\|_{L^2(K)}\to0,
\]
after enlarging $K$ slightly, and
$\|(Wv)*\rho_\nu-Wv\|_{L^2(K)}\to0$ by the usual local
$L^2$ approximation identity.  Their difference is exactly the measurable
coefficient commutator that occurs in the mollified equation.

For $a,b>0$, Fubini's theorem in \eqref{eq:resolvent-laplace} gives on
global $L^2$
\[
 R_{a,\epsilon}^{(2)}R_{b,\epsilon}^{(2)}F
 =\int_0^\infty\!\int_0^\infty
 e^{-as-bt}e^{-i\epsilon(s+t)B}F\dd s\dd t.
\]
This is symmetric under interchange of $(a,s)$ and $(b,t)$.  The same
cutoff-tail argument, now with the convolution of the two exponential
measures, proves commutation and cutoff compatibility on $L^2_{\uloc}$.
Indeed, after restriction to a compact interval the integrand is bounded by
$C_K(1+s+t)^{1/2}e^{-as-bt}\norm F_{U_1}$, which is integrable on
$[0,\infty)^2$; this is the precise domination needed for Fubini.
It applies without change to every finite product.

Finally, the scalar factorization of $h_{\kappa,\epsilon}$, the spectral
theorem, and cutoff exhaustion prove \eqref{eq:uloc-h-factorization}.
Successive use of \eqref{eq:uloc-resolvent-inverse} gives
\begin{align*}
 &(6\kappa+i\epsilon B)(4\kappa+i\epsilon B)
 (2\kappa+i\epsilon B)h_{\kappa,\epsilon}(B)f
 =8\kappa^2f
\end{align*}
in distributions.  Therefore $h_{\kappa,\epsilon}(B)$ is injective.  Each
scalar resolvent has the one-sided exponential measure in the Laplace formula
above.  Hence the Fourier measure of $h_{\kappa,\epsilon}$ is the convolution
of three such measures and has every polynomial moment.  The same tail
argument proves \eqref{eq:h-exhaustion}.
The differential factors in the preceding display are legitimate: from
\eqref{eq:uloc-resolvent-inverse}, a resolvent output has the distributional
$B$-regularity needed to apply its corresponding factor, and the factors are
removed one at a time in the reverse order.  We never claim that an arbitrary
$L^2_{\uloc}$ function lies in the global operator domain of the unbounded
product.
\end{proof}

\section{Nondecaying diagonal Green functions}\label{sec:green}

The positive determinant density is built from diagonal values of the
Zakharov--Shabat resolvent.  Here we construct those values for bounded,
possibly nondecaying potentials; because ordinary Jost asymptotics are then
unavailable, the construction uses half-line $L^2$ Weyl lines.

For $\lambda>0$ introduce the Zakharov--Shabat operator
\begin{equation}\label{eq:Lax}
 L_\lambda(q)=
 \begin{pmatrix}\lambda-\partial_x&q\\-\bar q&\lambda+\partial_x\end{pmatrix}
 =\lambda I+\mathcal A(q).
\end{equation}

We first prove that this operator has an everywhere-defined inverse with
enough $H^1$ regularity to evaluate its output pointwise.  This is the basis
for constructing the Green kernel.

\begin{lemma}[Accretive resolvent]\label{lem:accretive}
For $q\in L^\infty$, $\mathcal A(q)$ is skew-adjoint on
$H^1(\R;\C^2)$.  The operator $L_\lambda(q):H^1\to L^2$ is bijective and
\begin{align}
 \norm{L_\lambda(q)^{-1}}_{L^2\to L^2}&\le\lambda^{-1},
 \label{eq:L2-resolvent}\\
 \norm{L_\lambda(q)^{-1}}_{L^2\to H^1}
 &\le C_{\lambda,\norm q_\infty}.\label{eq:H1-resolvent}
\end{align}
If $q_j\to q$ in $L^\infty$ and $\sup_j\norm{q_j}_\infty\le Q$, then
\begin{equation}\label{eq:resolvent-continuity}
 \norm{L_\lambda(q_j)^{-1}-L_\lambda(q)^{-1}}_{L^2\to H^1}
 \le C_{\lambda,Q}\norm{q_j-q}_\infty.
\end{equation}
\end{lemma}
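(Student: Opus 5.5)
We split $\mathcal A(q)=-\sigma\partial_x+Q$ with $\sigma=\operatorname{diag}(1,-1)$ and $Q=\begin{pmatrix}0&q\\-\bar q&0\end{pmatrix}$. Since $Q^*=-Q$ pointwise, multiplication by $Q$ is a bounded skew-adjoint operator on $L^2$. The operator $-\sigma\partial_x$ is skew-adjoint on $H^1$, because it is the Fourier multiplier $-i\xi\sigma$ with purely imaginary symbol. A bounded skew-adjoint perturbation preserves skew-adjointness on the same domain (bounded perturbation theorem applied to $i\mathcal A$), so $\mathcal A(q)$ is skew-adjoint on $H^1(\R;\C^2)$. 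Consequently, for $u\in H^1$,
\[
\Rea\langle u,L_\lambda(q)u\rangle=\lambda\norm u_2^2 ,
\]
which gives $\norm{L_\lambda u}_2\ge\lambda\norm u_2$. This shows injectivity and closed range. The range is dense because its orthogonal complement is the kernel of $L_\lambda^*=\lambda-\mathcal A$ on $H^1$, and the same accretivity argument kills that kernel. Alternatively, surjectivity follows from the spectral theorem for the self-adjoint operator $i\mathcal A$, since $\lambda>0$ lies off the imaginary axis. Either way we obtain bijectivity and \eqref{eq:L2-resolvent}.

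For \eqref{eq:H1-resolvent}, let $u=L_\lambda^{-1}f$. Then $\sigma\partial_xu=\lambda u+Qu-f$, and since $\sigma$ is invertible,
\[
\norm{u_x}_2\le(\lambda+\norm q_\infty)\norm u_2+\norm f_2\le\bigl(2+\norm q_\infty/\lambda\bigr)\norm f_2 .
\]
Combining this with the $L^2$ bound yields the $H^1$ estimate with $C_{\lambda,\norm q_\infty}$ explicit.

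For \eqref{eq:resolvent-continuity}, we use the second resolvent identity
\[
L_\lambda(q_j)^{-1}-L_\lambda(q)^{-1}=L_\lambda(q_j)^{-1}\bigl(Q-Q_j\bigr)L_\lambda(q)^{-1},
\]
which holds as an identity $L^2\to H^1$ because both inverses map $L^2$ bijectively onto the common domain $H^1$. Here $L_\lambda(q_j)^{-1}:L^2\to H^1$ is bounded by $C_{\lambda,Q}$ from the previous step, uniformly in $j$ thanks to $\sup_j\norm{q_j}_\infty\le Q$. The multiplication $Q-Q_j$ has $L^2$ operator norm at most $\norm{q_j-q}_\infty$, and $L_\lambda(q)^{-1}$ has $L^2$ norm at most $\lambda^{-1}$. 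Multiplying these bounds gives the claim.

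The only delicate point is the domain bookkeeping in the skew-adjointness claim, and specifically that the constant matrix $\sigma$ has no kernel, so the domain is exactly $H^1$. This is in contrast to the degenerate $V$ in Lemma~\ref{lem:ulocfc}. Everything else is routine.
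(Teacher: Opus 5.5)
Your proposal is correct and follows the paper's proof essentially step for step. Both arguments use the same splitting into a skew-adjoint diagonal transport part plus a bounded skew-adjoint potential, and the identity $\Rea\ip{u}{L_\lambda(q)u}=\lambda\norm u_2^2$ giving injectivity, closed range and dense range via the adjoint; both also use the component equation for the $H^1$ bound, and the resolvent identity with the $L^2\to H^1$ bound on one factor and the $L^2\to L^2$ bound on the other. The differences are cosmetic: you read the accretivity identity off from skew-adjointness instead of integrating by parts, and you make the constant explicit.
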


\begin{proof}
Let
\[
 A_0=\begin{pmatrix}-\partial_x&0\\0&\partial_x\end{pmatrix},
 \qquad
 Q(x)=\begin{pmatrix}0&q(x)\\-\overline{q(x)}&0\end{pmatrix}.
\]
On $L^2(\R;\C^2)$, $A_0$ has domain $H^1(\R;\C^2)$ and
$A_0^*=-A_0$, by the elementary adjoint identity
$\partial_x^*=-\partial_x$.  Multiplication by $Q$ is bounded and
$Q(x)^*=-Q(x)$ almost everywhere.  Therefore the bounded-perturbation
adjoint formula gives
\[
 \mathcal D((A_0+Q)^*)=\mathcal D(A_0^*)=H^1,\qquad
 (A_0+Q)^*=-(A_0+Q).
\]
Thus $\mathcal A(q)=A_0+Q$ is skew-adjoint; see also the bounded
perturbation theorem \cite[Chapter~VIII]{ReedSimon1980}.  In particular,
$L_\lambda(q)^*=\lambda I-\mathcal A(q)$ has exactly the same domain
$H^1$, a fact used in the range argument below.  To see explicitly
why the resolvent estimate does not
depend on the size of $q$, write $F=(f,g)^T\in H^1(\R;\C^2)$.  Integration by
parts gives
\[
\begin{split}
 \Rea\ip{F}{L_\lambda(q)F}
 &=\lambda\norm{F}_{L^2(\R;\C^2)}^2\\
 &\quad+\Rea\int_\R
   \bigl(-\bar f f_x+\bar g g_x+\bar f qg-\bar g\,\bar qf\bigr)\dd x\\
 &=\lambda\norm{F}_{L^2(\R;\C^2)}^2.
\end{split}
\]
Indeed, the derivative terms have zero real part, while, with
$z=\bar f qg$, the two potential terms are $z-\bar z$ and are therefore
purely imaginary.  Thus Cauchy--Schwarz yields
\begin{equation}\label{eq:accretive-lower-bound}
 \norm F_2\le \lambda^{-1}\norm{L_\lambda(q)F}_2.
\end{equation}
In particular, $L_\lambda(q)$ is injective.  Its range is closed as well: if
$L_\lambda(q)F_n$ converges in $L^2$, then
\eqref{eq:accretive-lower-bound} makes $F_n$ Cauchy in $L^2$, and the
closedness of $L_\lambda(q)$ identifies its limit as an element of the domain.
Moreover,
$L_\lambda(q)^*=\lambda I-\mathcal A(q)$ satisfies the same estimate, so
$\ker L_\lambda(q)^*=\{0\}$.  Since
\[
 \overline{\operatorname{Ran}L_\lambda(q)}
   =\bigl(\ker L_\lambda(q)^*\bigr)^\perp=L^2,
\]
the range is also dense and hence equals $L^2$.  This proves bijectivity;
applying \eqref{eq:accretive-lower-bound} to
$F=L_\lambda(q)^{-1}G$ proves \eqref{eq:L2-resolvent}.  Notice that no
$L^\infty$ bound for $q$ enters this argument beyond ensuring that the
off-diagonal multiplication operator is a bounded skew-adjoint perturbation.

The component equations imply
\[
 \norm{\partial_xF}_2
 \le\norm{L_\lambda(q)F}_2
      +(\lambda+\norm q_\infty)\norm F_2,
\]
which proves \eqref{eq:H1-resolvent} by choosing $F$ as above.  The identical component estimate
for $L_\lambda(q)^*$ also gives
\[
 \norm{(L_\lambda(q)^*)^{-1}}_{L^2\to H^1}
 \le C_{\lambda,\norm q_\infty}.
\]
 The resolvent identity
\[
 L_\lambda(q_j)^{-1}-L_\lambda(q)^{-1}
 =-L_\lambda(q_j)^{-1}\bigl(L_\lambda(q_j)-L_\lambda(q)\bigr)
   L_\lambda(q)^{-1}
\]
together with the $L^2\to H^1$ bound for the first factor and the
$L^2\to L^2$ bound for the last proves
\eqref{eq:resolvent-continuity}.
\end{proof}

A homogeneous solution will mean a solution of
\begin{equation}\label{eq:homogeneous-lax}
 \partial_x\binom{f_1}{f_2}
 =\begin{pmatrix}\lambda&q\\\bar q&-\lambda\end{pmatrix}
  \binom{f_1}{f_2},
\end{equation}
which is equivalent to $L_\lambda(q)F=0$.  The one-dimensional Sobolev
inequality $|f(x)|\le C\norm f_{H^1}$ makes point evaluation bounded on
$H^1(\R)$.  The next proposition uses this fact to construct the Green
kernel and fixes all sign conventions.

Because a bounded potential need not decay, Jost solutions characterized by
prescribed free asymptotics at $\pm\infty$ are not available in general.  We
therefore replace them by the corresponding Weyl lines, selected only by
half-line square integrability.  For the Jost-solution construction and the
corresponding Green-kernel formula in the decaying AKNS setting, see
\cite[Appendix~B]{KKL2023}.

\begin{proposition}[Weyl lines and the Green kernel]\label{prop:green}
For every $q\in L^\infty$ and $x\in\R$, there is a one-dimensional subspace
$E_-(x)\subset\C^2$ consisting of the values at $x$ of homogeneous solutions
square integrable on $(-\infty,x]$, and a one-dimensional subspace
$E_+(x)\subset\C^2$ consisting of the values at $x$ of homogeneous solutions
square integrable on $[x,\infty)$.  Choose nonzero global homogeneous
solutions $\ell=(a,b)^T$ and $r=(c,d)^T$ so that
\[
 E_-(x)=\operatorname{span}\{\ell(x)\},\qquad
 E_+(x)=\operatorname{span}\{r(x)\},
\]
and put $\Delta=ad-bc$.  Then $\Delta\ne0$ is constant and
\begin{equation}\label{eq:green-kernel}
 G_q(x,y;\lambda)=\frac1\Delta
 \begin{cases}
  \binom{a(x)}{b(x)}\bigl(d(y),c(y)\bigr),&x<y,\\[1mm]
  \binom{c(x)}{d(x)}\bigl(b(y),a(y)\bigr),&x>y.
 \end{cases}
\end{equation}
The kernel has the jump
\begin{equation}\label{eq:green-jump}
 \operatorname{diag}(-1,1)\,[G_q(y+,y)-G_q(y-,y)]=I.
\end{equation}
If $q_j\to q$ in $L^\infty$ with a common bound, then the one-sided
diagonal values of $G_{q_j}$ converge to those of $G_q$ uniformly in $x$.
\end{proposition}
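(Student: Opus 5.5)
The argument combines a dichotomy for the constant-coefficient transfer system with the accretive resolvent of Lemma~\ref{lem:accretive}.

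\textbf{Step 1: existence of the Weyl lines.} The homogeneous system \eqref{eq:homogeneous-lax} has coefficient matrix
\[
 M=\begin{pmatrix}\lambda&q\\ \bar q&-\lambda\end{pmatrix},
\]
which is Hermitian with trace zero. For a solution $F=(f_1,f_2)$ a direct computation gives
\[
 \partial_x\bigl(|f_1|^2-|f_2|^2\bigr)=2\lambda\bigl(|f_1|^2+|f_2|^2\bigr).
\]
Hence $|f_1|^2-|f_2|^2$ is strictly increasing along every nonzero solution. This is the cone property that replaces Jost asymptotics.

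To produce a solution that is $L^2$ on $[x,\infty)$, I would use the resolvent directly. Take $G\in L^2$ supported in $(-\infty,x-1]$ and put $F=L_\lambda(q)^{-1}G\in H^1$. On $[x-1,\infty)$ this $F$ is a homogeneous solution lying in $L^2$. It is nonzero for suitable $G$: otherwise $F$ would vanish on $[x-1,\infty)$, and taking $G$ orthogonal in $L^2$ to the kernel elements of the adjoint yields a contradiction with the density of the range. Extending $F$ by the ODE gives the global solution $r$, and the same construction on the left gives $\ell$.

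\textbf{Step 2: uniqueness and $\Delta\ne0$.} The monotone quantity settles dimension one. If two independent solutions were both $L^2$ near $+\infty$, every solution would be. Choose the solution with $|f_1|^2-|f_2|^2>0$ at $x$. Integrating the identity shows $|f_1|^2-|f_2|^2\ge c>0$ for all larger arguments, so $|F|^2$ is bounded below there and $F$ is not square integrable, a contradiction. The same reasoning shows that $E_+$ lies in the cone $|f_1|<|f_2|$ and $E_-$ in the opposite cone $|f_1|>|f_2|$. For $E_+$, for instance, the function $|f_1|^2-|f_2|^2$ is increasing and must tend to $0$ by integrability, so it is negative, and the symmetric argument handles $E_-$. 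The two cones are disjoint, so $\ell(x)$ and $r(x)$ are independent and $\Delta\ne0$. Finally, $\Delta$ is constant because the Wronskian of a system with trace-free matrix satisfies $\Delta'=(\operatorname{tr}M)\Delta=0$.

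\textbf{Step 3: the Green kernel formula.} I would check that the kernel \eqref{eq:green-kernel} solves $L_\lambda(q)G=\delta I$ in the sense of the jump relation \eqref{eq:green-jump}. The jump is a direct computation using $\Delta$. Next, $\int G_q(\cdot,y)H(y)\,dy$ lies in $H^1$ for $H\in L^2$. To see this, the exponential dichotomy estimates coming from the monotone quantity and Gronwall, with rates depending on $\lambda$ and $\|q\|_\infty$, give $|G_q(x,y)|\lesssim e^{-c|x-y|}$. The uniqueness statement of Lemma~\ref{lem:accretive} then identifies this operator with $L_\lambda(q)^{-1}$.

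\textbf{Step 4: continuity, the main obstacle.} I expect the uniform-in-$x$ convergence of the diagonal values to be the hardest step. I would represent the diagonal value through the line slopes. Set
\[
 g_+=\frac{c}{d}\quad\text{on }E_+,\qquad \frac{b}{a}\quad\text{on }E_-.
\]
Both slopes lie in the open unit disk by the cone property, and they solve Riccati equations. The quantitative point is to show that the slope is the unique bounded solution of its Riccati equation and that it depends Lipschitz-continuously on $q$ in sup norm, uniformly in $x$. Subtracting the Riccati equations for $q_j$ and $q$ produces a linear equation for the difference, whose coefficient has real part at least $2\lambda-2\|q\|_\infty\sup|g|$. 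This is not obviously positive. I would instead use the contraction coming from $|g|\le\|q\|_\infty/(\sqrt{\lambda^2+\|q\|_\infty^2}+\lambda)<1$, or fall back on the operator estimate \eqref{eq:resolvent-continuity} applied to localized sources $G$ near $x$. In that second route, Sobolev embedding gives pointwise convergence of the values, with constants uniform in $x$ by translation invariance of all the bounds.
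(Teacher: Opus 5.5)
\textbf{The gap is Step~4.} This is the last assertion of the proposition, and your proposal does not prove it.

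On your Riccati route, the Schur bound does not cure the sign problem you noticed. The lower bound $2\lambda-2Qr_\lambda(Q)$ for the real part of the linearized coefficient is nonpositive as soon as $Q\ge\sqrt3\,\lambda$, so a Euclidean Gronwall argument fails for large amplitude. What is missing is the right metric. The term $\bar q-qg^2$ is an infinitesimal $SU(1,1)$ M\"obius field, hence a Killing field of the Poincar\'e disk, while $-2\lambda g$ contracts hyperbolic length at rate at least $2\lambda$ regardless of $q$. Integrating from $-\infty$ inside a common Schur disk (and from $+\infty$ for $h=c/d$) gives $\sup_x|g_j-g|\lesssim_{\lambda,Q}\|q_j-q\|_\infty$. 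The diagonal values $(1-g_-h)^{-1}\bigl(\begin{smallmatrix}1&h\\ g_-&g_-h\end{smallmatrix}\bigr)$ and $(1-g_-h)^{-1}\bigl(\begin{smallmatrix}g_-h&h\\ g_-&1\end{smallmatrix}\bigr)$, with $g_-=b/a$ and $|1-g_-h|\ge1-r_\lambda(Q)^2$, then converge uniformly.

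On your operator route, \eqref{eq:resolvent-continuity} and Sobolev embedding only control responses to sources of bounded $L^2$ norm. A diagonal value of the kernel is the response to a $\delta$-source, which is not in $L^2$, and translation invariance of the constants does not address this. The paper's proof supplies exactly this step:
\begin{enumerate}[label=\textup{(\roman*)}]
\item The $L^2\to H^1$ bounds for $T=L_\lambda(q)^{-1}$ and $T^*$, via point evaluation and duality ($T:H^{-1}\to L^2$), give uniform $L^2$ bounds on the rows $G_q(x,\cdot)$ and the columns $G_q(\cdot,y)$.
\item Insert these into the kernel resolvent identity $G_j(x,y)-G(x,y)=-\int G_j(x,z)(L_j-L)(z)G(z,y)\dd z$, with $G_j=G_{q_j}$ and $L_j=L_\lambda(q_j)$.
\item Cauchy--Schwarz yields $\sup_{x,y}|G_j-G|\le C_{\lambda,Q}\|q_j-q\|_\infty$, and one then passes to the one-sided limits.
\end{enumerate}

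\textbf{Steps~1--3 are a workable alternative to the paper's construction.} The paper never uses the monotone quantity here. It defines $G_q$ by Riesz representation from $T$ and derives the jump from $L_\lambda G_q=\delta_yI$. It gets rank at most one of each trace from the vanishing Wronskian of half-line $L^2$ solutions, rank at least one from the jump, and transversality from injectivity of $L_\lambda(q)$. Your cone argument gives one-dimensionality and transversality more directly. Two local repairs are needed.

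In Step~1 the nonvanishing claim is not justified. On the line $\ker L_\lambda(q)^*=\{0\}$, and a kernel element of a half-line adjoint would be another Weyl solution, whose existence is the point at issue. Instead, take $G$ supported in $[x-2,x-1]$. Backward integration from $F(x-1)=0$ makes $G\mapsto F(x-2)$ onto $\C^2$. By your Step~2, the values at $x-2$ of solutions square integrable at $-\infty$ form at most a line, which gives the contradiction.

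In Step~3, the exponential bound on $G_q$ does not follow from the strict cone and Gronwall alone. It needs a uniform cone $|b|\le r_\lambda(Q)|a|$, $|c|\le r_\lambda(Q)|d|$, which you have not derived; it follows from the Riccati equation as in Proposition~\ref{prop:normalized}. The bound is in any case unnecessary. For compactly supported $H$, the formula gives a multiple of $\ell$ to the left of $\supp H$ and a multiple of $r$ to the right, hence an $H^1$ solution of $L_\lambda(q)F=H$. Bijectivity identifies it with $TH$.
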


\begin{proof}
Put $T=L_\lambda(q)^{-1}$.  For fixed $x\in\R$ and $i\in\{1,2\}$, the
Sobolev point-evaluation bound and \eqref{eq:H1-resolvent} give
\[
 |(TF)_i(x)|\le C\norm{TF}_{H^1}
 \le C_{\lambda,\norm q_\infty}\norm F_2.
\]
Thus $F\mapsto(TF)_i(x)$ is a bounded functional on
$L^2(\R;\C^2)$.  The Riesz representation theorem supplies an $L_y^2$
row, and the two rows define a measurable matrix kernel
$G_q(x,y;\lambda)$ such that
\begin{equation}\label{eq:green-operator-representation}
 (TF)(x)=\int_\R G_q(x,y;\lambda)F(y)\dd y,
\end{equation}
where $TF$ is taken in its continuous $H^1$ representative.  The same
estimate, uniformly in $x$, gives the row bound
\[
 \sup_{x\in\R}
 \norm{G_q(x,\cdot;\lambda)}_{L_y^2(\C^{2\times2})}
 \le C_{\lambda,\norm q_\infty}.
\]

The identical $L^2\to H^1$ estimate for
$T^*=(L_\lambda(q)^*)^{-1}$ implies by duality that
$T:H^{-1}\to L^2$ is bounded.  Since
$\delta_y e_j\in H^{-1}(\R;\C^2)$ with norm independent of $y$, define
the $j$th column of $G_q(\cdot,y;\lambda)$ to be
$T(\delta_y e_j)$.  Testing against compactly supported $L^2$ functions
shows that these columns agree almost everywhere with the rows in
\eqref{eq:green-operator-representation}, and
\[
 \sup_{y\in\R}
 \norm{G_q(\cdot,y;\lambda)}_{L_x^2(\C^{2\times2})}
 \le C_{\lambda,\norm q_\infty}.
\]
In distributions,
$L_{\lambda,x}G_q(x,y;\lambda)=\delta_yI$.  Hence every column belongs to
$H^1_{\rm loc}$ away from $x=y$, solves \eqref{eq:homogeneous-lax} there,
and has one-sided limits at the diagonal.  To compute the jump, write
$J=\operatorname{diag}(-1,1)$ and
$V_\lambda(x)=\left(\begin{smallmatrix}\lambda&q(x)\\
-\bar q(x)&\lambda\end{smallmatrix}\right)$, so that
$L_\lambda=J\partial_x+V_\lambda$.  Integrating over
$(y-\varepsilon,y+\varepsilon)$ gives
\[
 J\bigl[G_q(y+\varepsilon,y;\lambda)
       -G_q(y-\varepsilon,y;\lambda)\bigr]
 +\int_{y-\varepsilon}^{y+\varepsilon}
    V_\lambda(x)G_q(x,y;\lambda)\dd x=I.
\]
The integral tends to zero because $V_\lambda$ is bounded and each column
of $G_q(\cdot,y;\lambda)$ is locally integrable.  Letting
$\varepsilon\downarrow0$ yields \eqref{eq:green-jump}, or explicitly
\[
 G_q(y+,y;\lambda)-G_q(y-,y;\lambda)
 =\begin{pmatrix}-1&0\\0&1\end{pmatrix}.
\]
Here
\[
 G_q(y-,y;\lambda):=\lim_{x\uparrow y}G_q(x,y;\lambda),\qquad
 G_q(y+,y;\lambda):=\lim_{x\downarrow y}G_q(x,y;\lambda);
\]
the sign records the side from which the first spatial variable approaches
$y$.  Write
\[
 L_y=G_q(y-,y;\lambda),\qquad R_y=G_q(y+,y;\lambda).
\]
The columns of $L_y$ and $R_y$ extend to half-line $L^2$ solutions: they are
restrictions of the globally $L^2$ columns
$T(\delta_y e_j)$ and solve the homogeneous equation away from $y$.  Any
two half-line $L^2$ solutions $u,v$ at the same end are dependent.  Indeed, the
coefficient matrix in \eqref{eq:homogeneous-lax} has trace zero, so
$(u_1v_2-u_2v_1)'=0$.  Since $|u|^2+|v|^2$ is integrable on the relevant
half-line, divide that half-line into disjoint unit intervals $I_n$ tending
to the end.  Their integrals tend to zero, and choose $x_n\in I_n$ so that
\[
 |u(x_n)|^2+|v(x_n)|^2
 \le \int_{I_n}(|u|^2+|v|^2).
\]
This is a common sequence on which both solutions tend to zero.
The constant Wronskian therefore vanishes.  Thus both trace matrices have
rank at most one.
Neither can have rank zero, because
\[
 R_y-L_y=\operatorname{diag}(-1,1)
\]
by \eqref{eq:green-jump}; if one trace vanished, the other would have rank
two.  Hence both ranks are exactly one, and their ranges construct
$E_-(y)$ and $E_+(y)$.  They also span \emph{all} half-line $L^2$
solutions: compare any such solution with a nonzero trace column and repeat
the Wronskian argument.  Their values at $y$ are dependent, and uniqueness
for the bounded-coefficient ODE makes the solutions dependent on the whole
half-line.  This proves both existence and one-dimensionality asserted in
the proposition.

The lines $E_-(y)$ and $E_+(y)$ are transverse.  Otherwise their common vector
would generate a global $L^2$ element of the kernel of $L_\lambda(q)$, contradicting
Lemma~\ref{lem:accretive}.  Since the coefficient matrix is traceless,
$\Delta'=0$; transversality gives $\Delta\ne0$.  On $x<y$, each column of
the kernel is a multiple of $\ell(x)$, while on $x>y$ it is a multiple of
$r(x)$.  Solving the resulting two-by-two linear system imposed by
\eqref{eq:green-jump} gives the row coefficients in
\eqref{eq:green-kernel}.  Here is that calculation explicitly.  Write
$G(x,y)=\ell(x)\alpha(y)$ for $x<y$ and
$G(x,y)=r(x)\beta(y)$ for $x>y$, with row vectors $\alpha,\beta$.
Since the jump equation is equivalent to
$r(y)\beta(y)-\ell(y)\alpha(y)=\operatorname{diag}(-1,1)$,
\[
 \begin{pmatrix}a&c\\b&d\end{pmatrix}
 \binom{-\alpha}{\beta}
 =\operatorname{diag}(-1,1),\qquad
 \begin{pmatrix}a&c\\b&d\end{pmatrix}^{-1}
 =\frac1\Delta\begin{pmatrix}d&-c\\-b&a\end{pmatrix}.
\]
Thus $\alpha=(d,c)/\Delta$ and $\beta=(b,a)/\Delta$, including precisely
the order and signs displayed in \eqref{eq:green-kernel}.

For stability, write
$G_j=G_{q_j}(\cdot,\cdot;\lambda)$, $G=G_q(\cdot,\cdot;\lambda)$,
$L_j=L_\lambda(q_j)$, and $L=L_\lambda(q)$.  The kernel form of the
resolvent identity is
\[
 G_j(x,y)-G(x,y)
 =-\int_\R G_j(x,z)(L_j-L)(z)G(z,y)\dd z.
\]
To justify its pointwise use, first compose the operator resolvent identity
with point evaluation at $x$.  This gives the displayed formula as an
$L_y^2$ identity; for fixed $x,y$ the integral on the right is absolutely
defined by the $L_z^2$ row and column bounds.  On each of the regions $x<y$
and $x>y$, both sides have the locally absolutely continuous
representatives supplied by the homogeneous ODE.  Equality almost everywhere
therefore extends to every point in each region, after which the one-sided
diagonal limits may be taken.
The row and column bounds just proved, followed by Cauchy--Schwarz, yield
\[
 \sup_{x,y}|G_j(x,y)-G(x,y)|
 \le C_{\lambda,Q}\norm{q_j-q}_\infty.
\]
Taking the one-sided limits proves the last assertion.  We record also
the quantitative nondegeneracy needed below.  Applying the same resolvent
identity with one potential equal to zero, and using the row and column
bounds above, gives
\[
 \sup_x\bigl(\norm{L_x}+\norm{R_x}\bigr)\le C_{\lambda,Q}
\]
whenever \(\norm q_\infty\le Q\).  The trace pairs therefore lie in the set
of pairs with both ranks at most one, fixed difference
\(\operatorname{diag}(-1,1)\), and norms at most \(C_{\lambda,Q}\).
This is a closed and bounded subset of a finite-dimensional matrix space,
hence compact.  Neither member of such a pair can vanish, because the other
would then equal the invertible jump.
The nonzero singular value of each trace therefore has a positive lower
bound depending only on $\lambda$ and $Q$.  It follows, for example by
normalizing a column of maximal length, that uniform convergence of the
traces implies uniform convergence of their ranges.
\end{proof}

\paragraph{The forward ratio.}
The kernel construction uses both Weyl lines, but the main proof requires
only the ratio on the left line.

Every homogeneous solution satisfies
\begin{equation}\label{eq:Jmonotonicity}
 \partial_x(|f_1|^2-|f_2|^2)=2\lambda(|f_1|^2+|f_2|^2).
\end{equation}
The potential terms cancel because their real parts agree.

\begin{proposition}[Forward normalized Green function]\label{prop:normalized}
The left Weyl component $a$ never vanishes.  The ratio
\begin{equation}\label{eq:gpm}
 g_{\lambda,+}=b/a
\end{equation}
is independent of the spanning solution and satisfies the Ricatti equation
\begin{equation}\label{eq:riccati-plus}
 (g_{\lambda,+})_x=\bar q-2\lambda g_{\lambda,+}-qg_{\lambda,+}^2.
\end{equation}
If $Q=\|q\|_\infty$, then
\begin{equation}\label{eq:Schur}
 |g_{\lambda,+}(x)|\le r_\lambda(Q)
 :=\frac{Q}{\sqrt{\lambda^2+Q^2}+\lambda}<1.
\end{equation}
On bounded subsets of $L^\infty$ this ratio depends continuously on $q$ in
the uniform norm, locally uniformly in $\lambda>0$.
\end{proposition}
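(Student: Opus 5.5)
The argument rests on the monotonicity identity \eqref{eq:Jmonotonicity} applied to the left Weyl solution $\ell=(a,b)^T$ from Proposition~\ref{prop:green}.

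\emph{Step 1: $|b|<|a|$.} Put $\phi=|a|^2-|b|^2$. By \eqref{eq:Jmonotonicity}, $\phi$ is nondecreasing, with
\[
\phi'=2\lambda(|a|^2+|b|^2)\ge0 .
\]
Since $\ell$ is square integrable on $(-\infty,x]$, the sampling argument from the proof of Proposition~\ref{prop:green} gives a sequence $x_n\to-\infty$ with $\ell(x_n)\to0$. Hence $\phi(x_n)\to0$. Monotonicity then yields
\[
\phi(x)\ge\lim_n\phi(x_n)+2\lambda\int_{-\infty}^x|\ell|^2=2\lambda\int_{-\infty}^x|\ell|^2>0,
\]
because $\ell$ is a nonzero solution of a linear ODE and so cannot vanish identically on any interval. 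Thus $|a|>|b|\ge0$, and in particular $a$ never vanishes.

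\emph{Step 2: the ratio and its equation.} Independence of $g_{\lambda,+}=b/a$ from the spanning solution follows from the one-dimensionality of $E_-(x)$. The Riccati equation is a direct quotient-rule computation. Using $a'=\lambda a+qb$ and $b'=\bar q a-\lambda b$, one gets
\[
(b/a)'=\bar q-\lambda g-g(\lambda+qg)=\bar q-2\lambda g-qg^2 .
\]

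\emph{Step 3: the bound \eqref{eq:Schur}.} The same function $\phi$ gives a quantitative estimate. Dividing by $|a|^2$, write $u=|g|^2<1$ and $\psi=\log\phi=\log|a|^2+\log(1-u)$. We have
\[
(\log|a|^2)'=2\Rea(a'/a)=2\lambda+2\Rea(qg).
\]
It is cleaner to work directly with the Riccati equation:
\[
\tfrac12(u)'=\Rea(\bar g g')=\Rea(q\bar g)\,(1-|g|^2)\cdot\text{(sign check)}-2\lambda u .
\]
Computing precisely, $\Rea(\bar g\bar q)-2\lambda u-u\Rea(qg)$ equals $\Rea(q g)(1-u)-2\lambda u$ after conjugation symmetry. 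Hence
\[
\tfrac12u'\le |q|\sqrt u(1-u)-2\lambda u\le Q\sqrt u(1-u)-2\lambda u .
\]
The right side is negative exactly when $\sqrt u>r$, where $r=r_\lambda(Q)$ is the positive root of $Qs^2+2\lambda s-Q=0$, namely
\[
r=\frac{Q}{\sqrt{\lambda^2+Q^2}+\lambda}.
\]
So the region $\{|g|\le r\}$ is forward invariant for this scalar differential inequality. It remains to see that $g$ enters it: suppose $|g(x_0)|>r$. Then $|g|$ is decreasing to the left of $x_0$, and $|g|>r$ there, since a backward crossing is excluded by comparison. On this set $u'\le -c<0$ wherever $u$ stays a fixed distance above $r^2$. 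Integrating backward then forces $u$ to grow without bound as $x\to-\infty$, contradicting $u<1$. A compactness argument of this kind should give $\limsup_{x\to-\infty}u\le r^2$, and with it the claim everywhere. I expect this step to be the main obstacle. The comparison needs care because $q$ is only measurable: $u$ is absolutely continuous, so I would argue with the integrated inequality. A cleaner alternative is to use directly that $\sup_{y\le x}|g|$ is attained or approached, and to apply the inequality at near-maxima.

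\emph{Step 4: continuity.} The lines $E_-(x)$ depend continuously on $q$, uniformly in $x$, by the last assertion of Proposition~\ref{prop:green} on ranges of the traces. The ratio $b/a$ is a continuous function of the line on the set $\{|b|\le r|a|\}$, where the first coordinate is bounded away from zero after normalization. Hence uniform convergence $q_j\to q$ gives uniform convergence of $g$. Local uniformity in $\lambda$ follows from the same resolvent identity, since the constants in Lemma~\ref{lem:accretive} are locally bounded in $\lambda>0$.
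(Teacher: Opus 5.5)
Your proposal is correct and follows the paper's proof essentially step for step. The steps are: the monotonicity identity integrated from a sequence on which the left Weyl solution vanishes at $-\infty$; the quotient-rule Riccati equation; forward invariance of the disk of radius $r_\lambda(Q)$, with backward growth contradicting $|g|<1$; and continuity via uniform convergence of the Weyl lines on the region $|b|\le r|a|$. Working with $u=|g|^2$ and a uniform negative rate, instead of the paper's $R'\le-2\lambda(R-r)$ for $R=|g|$, is a harmless variant; your backward-integration contradiction already closes Step 3, so the hedged compactness/limsup remark is unnecessary.
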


\begin{proof}
Integrate \eqref{eq:Jmonotonicity} along the left Weyl solution from a
sequence tending to $-\infty$ on which that solution tends to zero.  This gives
\[
 |a(x)|^2-|b(x)|^2=2\lambda\int_{-\infty}^x|\ell(s)|^2\dd s>0.
\]
Thus $a\ne0$ and $|g_{\lambda,+}|<1$.  The quotient rule gives
\eqref{eq:riccati-plus}.  If $Q=0$, the free Weyl line gives $g=0$.
Otherwise $r=r_\lambda(Q)$ solves $Q(1-r^2)=2\lambda r$.
Where $R=|g_{\lambda,+}|>r$, its radial derivative obeys
\[
 R'\le Q(1-R^2)-2\lambda R\le-2\lambda(R-r).
\]
The disk of radius $r$ is forward invariant.  If $R(x_0)>r$, it follows
that $R(x)>r$ for every $x<x_0$ and
$R(x)-r\ge e^{2\lambda(x_0-x)}(R(x_0)-r)$, contradicting $R<1$.
This proves the bound without prescribing a boundary value at infinity.

Proposition~\ref{prop:green} gives uniform convergence of the left Weyl
lines under uniform convergence of bounded potentials.  A unit spanning
vector on such a line satisfies $|a|\ge(1+r^2)^{-1/2}$, so the map
$(a,b)\mapsto b/a$ is uniformly nonsingular.  The ratios therefore converge
uniformly.  On a compact interval of positive heights the kernel bounds and
the disk bound are uniform, giving the asserted height uniformity.
\end{proof}

\section{The forward positive density}\label{sec:density}

Fix $\kappa>0$ throughout the proof, and set
\begin{equation}\label{eq:heights}
 \lambda_a=a\kappa\quad(a=1,2,3),\qquad d=(1,-2,1).
\end{equation}
For $m\ge0$ define
\begin{equation}\label{eq:ST}
 S_m[q]=\sum_ad_ag_{\lambda_a,+}[q]^m,
 \qquad T_m[q]=\sum_ad_a\lambda_ag_{\lambda_a,+}[q]^m.
\end{equation}
The cancellations $\sum_ad_a=\sum_ad_a\lambda_a=0$ give $S_0=T_0=0$.
Set
\begin{align}
 E_\kappa[q]&=30\kappa\sum_{m\ge1}|S_m[q]|^2,
 \label{eq:positive-density}\\
 J_\kappa[q]&=60\kappa\sum_{m\ge1}
 \Ima[(2T_m[q]+qS_{m+1}[q])\overline{S_m[q]}].
 \label{eq:positive-current}
\end{align}
On $\|q\|_\infty\le Q$, the disk bound gives
$|S_m|+\kappa^{-1}|T_m|\le C_\kappa r^m$ with $r=r_\kappa(Q)<1$.
Both series therefore converge absolutely and uniformly, uniformly on each
bounded set of potentials.

For the cross-height identity put
\begin{equation}\label{eq:Cauchy-coefficients}
 a_{ab}=\frac{60d_ad_b}{a+b},\qquad
 c_a=\sum_ba_{ab},\qquad(c_1,c_2,c_3)=(5,-4,1).
\end{equation}
Define the real gauge
\begin{equation}\label{eq:forward-gauge}
 H_\kappa^+[q]=-\frac14\sum_{a,b}a_{ab}
 \log|1-g_{\lambda_a,+}\overline{g_{\lambda_b,+}}|.
\end{equation}
Every logarithm is well-defined by \eqref{eq:Schur}.

\begin{proposition}[Forward cross-height factorization]\label{prop:cross-height}
For every bounded potential,
\begin{equation}\label{eq:cross-height-identity}
 \frac12\sum_ac_a\Rea(qg_{\lambda_a,+})-\partial_xH_\kappa^+
 =E_\kappa\ge0
\end{equation}
in distributions.
\end{proposition}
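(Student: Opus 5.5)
The identity is a direct computation from the Riccati equation \eqref{eq:riccati-plus} at two heights, followed by a geometric-series expansion. I would first fix the regularity. For $q\in L^\infty$ with $Q=\|q\|_\infty$, each $g_a:=g_{\lambda_a,+}$ is locally absolutely continuous, since it is a quotient of components of a homogeneous solution of \eqref{eq:homogeneous-lax} with $a\neq0$. It satisfies \eqref{eq:riccati-plus} almost everywhere, and by \eqref{eq:Schur} it obeys $|g_a|\le r:=r_{\kappa}(Q)<1$; the smallest height $\lambda_1=\kappa$ gives the largest radius. Hence $|1-g_a\overline{g_b}|\ge 1-r^2>0$ for all $a,b$. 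The functions $\log|1-g_a\overline{g_b}|$ are therefore locally absolutely continuous, and the chain rule holds pointwise almost everywhere. So it suffices to verify \eqref{eq:cross-height-identity} almost everywhere, because the distributional derivative of a locally absolutely continuous function is its a.e.\ derivative.

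The key step is the two-height product rule. Write $g=g_a$, $h=g_b$, $\lambda=\lambda_a$, $\mu=\lambda_b$. Inserting \eqref{eq:riccati-plus} for $g$ and its conjugate for $\bar h$, I expect the terms to group as
\[
 (g\bar h)_x=(1-g\bar h)\bigl(\bar q\bar h+qg\bigr)-2(\lambda+\mu)g\bar h .
\]
Taking real parts of the logarithmic derivative then gives
\[
 \partial_x\log|1-g\bar h|
 =-\Rea(qg)-\Rea(qh)+2(\lambda+\mu)\Rea\frac{g\bar h}{1-g\bar h}.
\]
This is the only genuinely computational point, and the one I would check most carefully: the cancellation of the $q g^2\bar h$ and $\bar q g\bar h^2$ terms against the factor $(1-g\bar h)$ is what makes the potential enter linearly.

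Next I would sum against the weights. Multiply by $-\tfrac14 a_{ab}$ and sum over $a,b$. The linear terms give $\tfrac14\sum_{a,b}a_{ab}\bigl(\Rea(qg_a)+\Rea(qg_b)\bigr)$. By symmetry of $a_{ab}$ and the definition of $c_a$ in \eqref{eq:Cauchy-coefficients}, this equals $\tfrac12\sum_a c_a\Rea(qg_a)$. In the quadratic terms, $\lambda_a+\lambda_b=(a+b)\kappa$ cancels the denominator of $a_{ab}$, producing $-30\kappa\sum_{a,b}d_ad_b\Rea\frac{g_a\overline{g_b}}{1-g_a\overline{g_b}}$ with the overall sign absorbed.

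Finally I would expand $\frac{z}{1-z}=\sum_{m\ge1}z^m$ with $|z|\le r^2<1$. The expansion converges absolutely and uniformly, so the sums may be interchanged. Then
\[
 \sum_{a,b}d_ad_b\,g_a^m\overline{g_b}^{\,m}=|S_m|^2 ,
\]
which is real, so $\Rea$ can be dropped. This yields $\partial_xH_\kappa^+=\tfrac12\sum_ac_a\Rea(qg_a)-E_\kappa$, which is \eqref{eq:cross-height-identity}. Nonnegativity of $E_\kappa$ is immediate from \eqref{eq:positive-density}. The remaining bookkeeping is routine: the uniform bound $|S_m|\le 4r^m$ justifies the series manipulations and the a.e.\ to distributional passage.
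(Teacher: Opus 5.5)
Your proposal is correct and follows the paper's proof essentially step for step. The steps are the paired Riccati identity for $\partial_x\log(1-g_a\overline{g_b})$, weighting by $a_{ab}/4$ together with symmetry and the row sums $c_a$, the cancellation $a_{ab}(\lambda_a+\lambda_b)=60\kappa d_ad_b$, and the geometric-series expansion into $\sum_{m\ge1}|S_m|^2$. Your regularity justification (local absolute continuity of the ratios, the uniform disk bound $|g_a|\le r_\kappa(Q)<1$, and the passage from an almost-everywhere identity to a distributional one) is the paper's argument, only spelled out in more detail.
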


\begin{proof}
Write $g_a=g_{\lambda_a,+}$ and $D_{ab}=1-g_a\overline{g_b}$.
The Riccati equation gives
\begin{equation}\label{eq:paired-plus}
 \partial_x\log D_{ab}
 =-(qg_a+\bar q\overline{g_b})
 +2(\lambda_a+\lambda_b)\frac{g_a\overline{g_b}}{D_{ab}}.
\end{equation}
Indeed, the terms involving $q,\bar q$ in $(D_{ab})_x$ equal
$-(qg_a+\bar q\overline{g_b})D_{ab}$.  Take real parts, multiply by
$a_{ab}/4$, and sum.  Symmetry, the row sums, and
$a_{ab}(\lambda_a+\lambda_b)=60\kappa d_ad_b$ yield
\[
 \frac12\sum_ac_a\Rea(qg_a)
 =\partial_xH_\kappa^+
  +30\kappa\Rea\sum_{a,b}d_ad_b\frac{g_a\overline{g_b}}{D_{ab}}.
\]
Expanding the last quotient as a geometric series gives
\[
 \Rea\sum_{a,b}d_ad_b\frac{g_a\overline{g_b}}{1-g_a\overline{g_b}}
 =\sum_{m\ge1}\left|\sum_ad_ag_a^m\right|^2.
\]
The ratios are locally Lipschitz for bounded potentials.  Their uniform
disk bound justifies the logarithmic derivatives and the absolutely
convergent series directly almost everywhere and hence in distributions.
\end{proof}

For the classical calculations, a smooth $M$ solution means a solution
belonging to $C(I;M_s)$ for every $s\ge0$, where $M_s$ is defined after
Lemma~\ref{lem:frequency-truncation}.  Its spatial derivatives are bounded
on compact time intervals.  The time Lax matrix is
\begin{equation}\label{eq:time-lax}
 V_\lambda=i\begin{pmatrix}
 2\lambda^2-|q|^2&2\lambda q+q_x\\
 2\lambda\bar q-\bar q_x&-2\lambda^2+|q|^2
 \end{pmatrix}.
\end{equation}
With $U_\lambda$ the matrix in \eqref{eq:homogeneous-lax}, direct
multiplication gives $U_t-V_x+[U,V]=0$ precisely for NLS: its $(1,2)$
entry is $q_t-iq_{xx}+2i|q|^2q$.
The fundamental matrix of $F_t=V_\lambda F$ and its inverse are bounded
uniformly in $x$ on every compact time interval.  Compatibility preserves
the spatial equation, and these bounds preserve half-line $L^2$ solutions.
Thus the time equation transports the Weyl lines.  For $g=g_{\lambda,+}$,
the quotient rule yields
\begin{equation}\label{eq:gplus-time}
 g_t=i(2\lambda\bar q-\bar q_x-4\lambda^2g+2|q|^2g
       -2\lambda qg^2-q_xg^2).
\end{equation}

\begin{proposition}[Microscopic conservation law]\label{prop:micro-law}
For every mild solution $q\in C(I;M)$,
\begin{equation}\label{eq:micro-law}
 \partial_tE_\kappa[q]+\partial_xJ_\kappa[q]=0
\end{equation}
in $\mathcal D'(I\times\R)$.  The identity is classical for smooth
solutions.  If $q_j\to q$ in $C(I_0;M)$ on a compact interval $I_0$,
then $E_\kappa[q_j]\to E_\kappa[q]$ and $J_\kappa[q_j]\to J_\kappa[q]$
uniformly on $I_0\times\R$.
\end{proposition}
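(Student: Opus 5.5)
The proof has three parts: a pointwise computation for smooth solutions, uniform continuity of the density and current under $M$-convergence, and approximation of mild solutions by smooth ones.

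\textbf{Smooth solutions.} Let $q$ be a smooth $M$ solution. Write $g_a=g_{\lambda_a,+}$. The spatial Riccati equation \eqref{eq:riccati-plus} and the time equation \eqref{eq:gplus-time} hold pointwise. The disk bound \eqref{eq:Schur} holds uniformly on compact time intervals. Hence every power series in $g_a$ below converges absolutely, together with its $x$- and $t$-derivatives.

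Rather than differentiate $E_\kappa$ directly, I would use the cross-height identity \eqref{eq:cross-height-identity}, which writes
\[
 E_\kappa=\tfrac12\sum_a c_a\Rea(qg_a)-\partial_xH_\kappa^+ .
\]
This reduces the claim to two computations.

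First, the classical local law for $\Rea(qg)$. Using \eqref{eq:gplus-time} and the NLS equation, one checks
\[
 \partial_t(qg)=\partial_x\Phi_\lambda
\]
for a current $\Phi_\lambda$ that is polynomial in $q,\bar q,q_x,g,\lambda$. This is the standard transmission-coefficient density; it equals $\partial_t$ of $\partial_x\log a$ up to a derivative. Concretely, $qg=\partial_x\log a-\lambda$ and $(\log a)_t=V_{11}+V_{12}g$, so $\partial_t(qg)=\partial_x\bigl(V_{11}+V_{12}g\bigr)$.

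Second, $\partial_t H_\kappa^+$, computed from \eqref{eq:gplus-time}. We then get
\[
 J_\kappa=-\tfrac12\sum_ac_a\Rea\bigl(i(2\lambda_a q+q_x)g_a+\dots\bigr)+\partial_tH_\kappa^+ ,
\]
and the remaining task is to check that this expression equals \eqref{eq:positive-current}.

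This identification is the main algebraic obstacle. I would compute $\partial_t\log D_{ab}$ in analogy with \eqref{eq:paired-plus}. The terms containing $q_x$ and $\bar q_x$ should collect into
\[
 -\bigl(q_xg_a^{\,}\cdots\bigr)D_{ab},
\]
which cancel the $q_x$ terms from $\Phi$ via the row sums $c_a$. The terms containing $\lambda_a^2$ and $\lambda_a q$ should reorganize, after the geometric expansion used in Proposition~\ref{prop:cross-height}, into $\sum_m\Ima[(2T_m+qS_{m+1})\overline{S_m}]$. The weights $a_{ab}(\lambda_a+\lambda_b)=60\kappa d_ad_b$ and the cancellations $S_0=T_0=0$ are what make this work. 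I expect this bookkeeping to be the longest step. A cleaner route I would try is to verify directly that
\[
 \partial_t\frac{g_a\overline{g_b}}{D_{ab}}
\]
is a spatial derivative plus terms that cancel under $\sum d_ad_b$.

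\textbf{Uniform continuity.} Let $q_j\to q$ in $C(I_0;M)$. By \eqref{eq:Membed}, $q_j\to q$ in $L^\infty$ uniformly in $t$, with a common bound $Q$. Proposition~\ref{prop:normalized} then gives uniform convergence of each $g_{\lambda_a,+}$, uniformly on $I_0\times\R$ and for the three heights. Since $|g|\le r_{\kappa}(Q)<1$ uniformly, each term $S_m$ and $T_m$ converges uniformly. The tails are bounded by $C_\kappa r^m$, so $E_\kappa$ and $J_\kappa$ converge uniformly. Note that $J_\kappa$ contains no derivatives of $q$, which is essential here.

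\textbf{Mild solutions.} For a mild solution $q$ and any compact $I_0\subset I$, take the finite-frequency approximants from Lemma~\ref{lem:frequency-truncation}. Their solutions are smooth $M$ solutions by the persistence remark following that lemma, and they converge in $C(I_0;M)$ because they share a common $M$ bound, which holds for large $j$ by the local difference estimates. If the interval must be subdivided, I would restart the approximation at intermediate times.

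The identity for smooth solutions holds pointwise and hence in $\mathcal D'$. Testing \eqref{eq:micro-law} against $\phi\in C_c^\infty$ and using uniform convergence of $E_\kappa$ and $J_\kappa$ on $\supp\phi$, we pass to the limit and obtain \eqref{eq:micro-law} for $q$. Since $\phi$ is arbitrary, this holds on all of $I$.
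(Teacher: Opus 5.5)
Your proposal is correct and follows the paper's proof essentially step for step: the local law $\partial_t(qg)=\partial_x(V_{11}+V_{12}g)$ obtained from the Weyl component $a$, then differentiating the cross-height identity so that the current is $-\tfrac12\sum_ac_a\Rea\Phi_{\lambda_a}+\partial_tH_\kappa^+$ and identifying this with the series using the row sums, $a_{ab}(\lambda_a+\lambda_b)=60\kappa d_ad_b$ and the geometric expansion, and finally passing to mild solutions through finite-frequency approximants and uniform convergence of the derivative-free density and current. The bookkeeping you left open closes as you expected: the $q_x$-terms of $\partial_tD_{ab}$ are exactly $i(\bar q_x\overline{g_b}-q_xg_a)D_{ab}$, which after taking real parts and summing against $-\tfrac14a_{ab}$ gives $-\tfrac12\sum_ac_a\Ima(q_xg_a)$ and cancels the derivative terms coming from $-\tfrac12\sum_ac_a\Rea\Phi_{\lambda_a}$.
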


\begin{proof}
First take a smooth solution.  The forward Weyl component satisfies
\[
 a_x/a=\lambda+qg,\qquad
 a_t/a=i[2\lambda^2-|q|^2+(2\lambda q+q_x)g].
\]
Commuting derivatives of these quotients gives
\begin{equation}\label{eq:forward-rho-law}
 \partial_t(qg)+\partial_xj_\lambda^+=0,
 \qquad j_\lambda^+=-i[-|q|^2+(2\lambda q+q_x)g].
\end{equation}
This computation is local and does not require a global logarithm of $a$.
Differentiating \eqref{eq:cross-height-identity} gives a current
$\frac12\sum_ac_a\Rea j_{\lambda_a}^++\partial_tH_\kappa^+$.
We check that it equals \eqref{eq:positive-current}.

Write $g_a=g_{\lambda_a,+}$, $D_{ab}=1-g_a\overline{g_b}$, and
$X_{ab}=g_a\overline{g_b}/D_{ab}$.  Symmetry gives
\[
 \partial_tH_\kappa^+
 =\frac12\Rea\sum_{a,b}a_{ab}
       \frac{(g_a)_t\overline{g_b}}{D_{ab}}.
\]
The derivative terms in the other part of the current are
$\frac12\sum_ac_a\Ima(q_xg_a)$.  Those from this display are
\[
 \frac12\Ima\sum_{a,b}a_{ab}
 \frac{\bar q_x\overline{g_b}+q_xg_a^2\overline{g_b}}{D_{ab}}
 =-\frac12\sum_ac_a\Ima(q_xg_a).
\]
To see the cancellation, interchange $a,b$ in the conjugate first term and
use $g_a^2\overline{g_b}/D_{ab}=g_a/D_{ab}-g_a$.
For the remaining terms, $X_{ba}=\overline{X_{ab}}$ implies
\[
 \Ima\sum_{a,b}a_{ab}X_{ab}=0,\qquad
 2\Ima\sum_{a,b}a_{ab}\lambda_a^2X_{ab}
 =\Ima\sum_{a,b}a_{ab}(\lambda_a^2-\lambda_b^2)X_{ab}.
\]
The terms linear in $q$ combine to
\[
 \Ima\left[q\sum_{a,b}a_{ab}(\lambda_a+\lambda_b)
                  \frac{g_a}{D_{ab}}\right].
\]
Using $a_{ab}(\lambda_a+\lambda_b)=60\kappa d_ad_b$, the current becomes
\[
 60\kappa\Ima\sum_{a,b}d_ad_b
 \left[q\frac{g_a}{D_{ab}}+(\lambda_a-\lambda_b)X_{ab}\right].
\]
The geometric expansions are
\[
 \sum_{a,b}d_ad_b\frac{g_a}{D_{ab}}
   =\sum_{m\ge1}S_{m+1}\overline{S_m},\qquad
 \sum_{a,b}d_ad_b(\lambda_a-\lambda_b)X_{ab}
   =\sum_{m\ge1}(T_m\overline{S_m}-S_m\overline{T_m}).
\]
The grade-zero term vanishes by $S_0=0$.  Taking imaginary parts proves
the stated formula and the smooth conservation law.  All rearrangements
are justified by the uniform geometric majorant.

For mild solutions, use the finite-frequency approximating solutions of
Lemma~\ref{lem:frequency-truncation} on a common local lifespan.
Uniform convergence of the ratios and the disk bound imply uniform
convergence of both series defining $E_\kappa,J_\kappa$.  Testing the
smooth law against a compactly supported test function and passing to the
limit gives the distributional law.  Iteration covers any compact subset
of the existing lifespan.  The same uniform convergence argument gives the
stated continuity for arbitrary convergent paths, not only solution paths.
\end{proof}

The spatial Riccati equation also gives
\begin{equation}\label{eq:S-spatial}
 (S_m)_x=m\bar qS_{m-1}-2mT_m-mqS_{m+1},\qquad m\ge1.
\end{equation}
For a snapshot $p\in M$ let $M_np=e^{-inx}p$.  Define
\begin{align}
 B_{n,R}[p](y)&=\int\chi_{R,y}^2 E_\kappa[M_np]\dd x,
 \label{eq:Bnr}\\
 b_{n,R}[p](y)^2&=\int\chi_{R,y}^2\sum_{m\ge1}|S_m[M_np]|^2\dd x
              =\frac{B_{n,R}[p](y)}{30\kappa},\notag\\
 c_{n,R}[p](y)^2&=\int\chi_{R,y}^2\sum_{m\ge1}|T_m[M_np]|^2\dd x.
 \notag
\end{align}
Define their spatial suprema 
\begin{equation}\label{eq:beta-gamma}
 \beta_{n,R}(p)=\sup_y b_{n,R}[p](y),\qquad
 \gamma_{n,R}(p)=\sup_y c_{n,R}[p](y).
\end{equation}
The quadratic part of $B$ is
\begin{equation}\label{eq:Bnr-quadratic}
 B_{n,R}^{[2]}[p](y)=30\kappa
 \|\chi_{R,y}h_\kappa(D)\overline{M_np}\|_2^2,
 \qquad
 h_\kappa(s)=\frac1{2\kappa+is}-\frac2{4\kappa+is}+\frac1{6\kappa+is}.
\end{equation}
Indeed, this equation follows by linearizing the Riccati equation; higher grades and nonlinear
corrections contribute $O(\varepsilon^4)$ to $B[\varepsilon p]$,
uniformly in $y$ for fixed $p,n,R$.

Along a solution use $B_{n,R}(t,y),b_{n,R}(t,y),c_{n,R}(t,y)$ for the
same expressions with $M_np$ replaced by the full transform $q_n=\cG_nq$.
Constant-phase covariance of the ratios gives
\begin{equation}\label{eq:static-dynamic-window}
 B_{n,R}(t,y)=B_{n,R}[q(t)](y+2nt),
\end{equation}
and the analogous identities for $b,c$.  Thus their suprema are snapshot
functionals.  The conservation law yields
\begin{equation}\label{eq:B-flux}
 \partial_tB_{n,R}(t,y)
 =\int(\chi_{R,y}^2)'J_\kappa[q_n](t,x)\dd x.
\end{equation}
For mild solutions this is initially distributional in time;
Section~\ref{sec:active-flux} proves its uniform continuous-time version.

Use the calibration
\begin{equation}\label{eq:saturated-calibration}
 \widehat\Theta_\kappa(B)=\kappa\sqrt{\frac B{\kappa+B}},\qquad B\ge0,
\end{equation}
and define
\begin{equation}\label{eq:KR}
 K_R(p)=\sum_n\sup_y\widehat\Theta_\kappa(B_{n,R}[p](y)),
 \qquad k_R(p)=K_R(p)/\kappa.
\end{equation}
The sum is initially allowed to be infinite.  Since the calibration is
increasing,
\begin{equation}\label{eq:exact-summand}
 k_R=\sum_n\frac{\sqrt{30}\,\beta_{n,R}}{\sqrt{1+30\beta_{n,R}^2}},
\end{equation}
and hence
\begin{equation}\label{eq:k-beta}
 k_R\asymp\sum_n\min(\beta_{n,R},1).
\end{equation}

\begin{lemma}[Raw localized flux bound]\label{lem:raw-flux-bound}
For every $p\in M$, $n\in\Z$, $R\ge1$, and $y\in\R$,
\begin{equation}\label{eq:raw-flux-bound}
 \left|\int(\chi_{R,y}^2)'J_\kappa[M_np]\dd x\right|
 \le\frac{C_\chi\kappa}{R}b_{n,R}[p](y)
       \big(c_{n,R}[p](y)+\|p\|_\infty b_{n,R}[p](y)\big).
\end{equation}
The same bound holds for the dynamic boxes along every mild solution.
\end{lemma}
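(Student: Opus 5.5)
Since $J_\kappa[M_np]=60\kappa\sum_{m\ge1}\Ima[(2T_m+qS_{m+1})\overline{S_m}]$ with $q=M_np$, $|q|=|p|$, the plan is a pointwise Cauchy--Schwarz estimate in the grade index $m$, followed by Cauchy--Schwarz in $x$ against the window derivative. The pointwise bound is
\[
 |J_\kappa[M_np]|\le 60\kappa\Bigl(2\bigl(\textstyle\sum_m|T_m|^2\bigr)^{1/2}
 +\|p\|_\infty\bigl(\textstyle\sum_m|S_{m+1}|^2\bigr)^{1/2}\Bigr)
 \bigl(\textstyle\sum_m|S_m|^2\bigr)^{1/2}.
\]
The sums $\sum_{m\ge1}|S_{m+1}|^2\le\sum_{m\ge1}|S_m|^2$ are trivially comparable by dropping the $m=1$ term. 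All series converge absolutely by the disk bound \eqref{eq:Schur}, which was already noted after \eqref{eq:positive-current}.

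Next comes the window. Write $(\chi_{R,y}^2)'=2\chi_{R,y}\chi_{R,y}'$, and note $|\chi_{R,y}'(x)|=R^{-1}|\chi'((x-y)/R)|\le C_\chi R^{-1}\chi_{R,y}(x)$ by \eqref{eq:windowderivatives}. Hence $|(\chi_{R,y}^2)'|\le 2C_\chi R^{-1}\chi_{R,y}^2$. Inserting the pointwise bound and applying Cauchy--Schwarz in $x$ to the products $\chi_{R,y}(\sum|T_m|^2)^{1/2}\cdot\chi_{R,y}(\sum|S_m|^2)^{1/2}$ and $\chi_{R,y}^2\sum|S_m|^2$ gives exactly $C_\chi\kappa R^{-1}b_{n,R}[p](y)\bigl(c_{n,R}[p](y)+\|p\|_\infty b_{n,R}[p](y)\bigr)$, by the definitions following \eqref{eq:Bnr}. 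Integrability is not an issue: $\chi$ is Schwartz and the integrands are bounded, so all integrals are finite.

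For the dynamic boxes, $q_n=\cG_nq(t)$ equals $M_n$ applied to a translated snapshot times the unimodular constant $e^{-in^2t}$. Constant-phase covariance of the ratios (used for \eqref{eq:static-dynamic-window}) shows that $|S_m|,|T_m|$ and $J_\kappa$ transform compatibly: a constant phase $e^{i\theta}$ multiplies $g$ by $e^{-i\theta}$, and both $\overline{S_m}T_m$ and $q S_{m+1}\overline{S_m}$ are invariant. The translation only shifts $y$, and $\|q_n\|_\infty=\|q(t)\|_\infty$. The same argument therefore applies verbatim at every fixed $t$, since it is purely a snapshot estimate.

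I do not expect a serious obstacle. The only care point is the phase bookkeeping in the dynamic case: checking that $J_\kappa$ is invariant under $q\mapsto e^{i\theta}q$. This follows since $g\mapsto e^{-i\theta}g$ sends $S_m\mapsto e^{-im\theta}S_m$, $T_m\mapsto e^{-im\theta}T_m$, so each summand is unchanged.
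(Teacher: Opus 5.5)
Your proposal is correct and follows the paper's proof essentially step for step. The paper also uses a pointwise Cauchy--Schwarz bound in the grade index $m$, with the left shift acting as an $\ell^2$ contraction, then $|(\chi_{R,y}^2)'|\le C_\chi R^{-1}\chi_{R,y}^2$ from \eqref{eq:windowderivatives}, then Cauchy--Schwarz in $x$, and finally translation plus constant-phase covariance for the dynamic boxes. In fact the dynamic case needs no covariance at all: your pointwise bound holds for any bounded potential, and the dynamic quantities are defined directly with $q_n=\cG_nq$ in place of $M_np$.
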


\begin{proof}
The left shift is a contraction on $\ell^2$, so pointwise
\[
 |J_\kappa[M_np]|
 \le C\kappa\big(|T|_{\ell^2}|S|_{\ell^2}
                    +\|p\|_\infty|S|_{\ell^2}^2\big).
\]
Use $|(\chi_{R,y}^2)'|\le C_\chi R^{-1}\chi_{R,y}^2$ and
Cauchy--Schwarz in $x$.  The dynamic statement follows by translation and
constant-phase covariance.  No differentiability of the potential is used.
\end{proof}

\section{An exact nonlinear spectral frame}\label{sec:fock}

The powers of the Riccati ratio admit a self-adjoint linearization on a
graded sequence space. We call this structure a Fock realization and introduce it in this section. It gives an elegant way of estimating in later sections. 

Let $\mathsf Ne_m=me_m$ on $\ell^2(\mathbb N)$, put $D=-i\partial_x$,
and set $A_0=D\mathsf N^{-1}$ on $\mathcal H=L^2(\R;\ell^2)$.
With $X_0=0$, define
\begin{equation}\label{eq:Aplus}
 (A_q^+X)_m=\frac DmX_m+i\bar qX_{m-1}-iqX_{m+1}.
\end{equation}
The free operator is self-adjoint on
\[
 \operatorname{Dom}(A_0)
 =\left\{X\in\mathcal H:\sum_{m\ge1}m^{-2}\|DX_m\|_2^2<\infty\right\}.
\]
The two shifts are adjoints of each other, and their sum has norm at most
$2\|q\|_\infty$.  Thus $A_q^+$ is self-adjoint on the same domain.
Its velocity operator $\mathsf N^{-1}$ is a positive contraction, so
Lemmas~\ref{lem:ulocfc}--\ref{lem:uloc-resolvent-factorization} apply.

The two rational filters are
\begin{equation}\label{eq:h-t-filters}
 h_\kappa(s)=\frac{8\kappa^2}
 {(2\kappa+is)(4\kappa+is)(6\kappa+is)},
 \qquad t_\kappa(s)=-\frac{is}{2}h_\kappa(s).
\end{equation}
The partial fractions of $h_\kappa$ are those in
\eqref{eq:Bnr-quadratic}.  The numerator $8\kappa^2$ is the remainder
after cancellation of the constant and affine height moments.

\begin{lemma}[Self-adjoint realization of the features]\label{lem:fock-realization}
For $p\in M$ and $\lambda>0$, in $L^2_{\uloc}(\R;\ell^2)$,
\begin{equation}\label{eq:weyl-resolvent-plus}
 V_{\lambda,+}[p]:=(g_{\lambda,+}[p]^m)_{m\ge1}
 =(2\lambda+iA_p^+)^{-1}(\bar p e_1).
\end{equation}
Consequently
\begin{equation}\label{eq:determinant-fock-plus}
 (S_m[p])_{m\ge1}=h_\kappa(A_p^+)(\bar p e_1),\qquad
 (T_m[p])_{m\ge1}=t_\kappa(A_p^+)(\bar p e_1).
\end{equation}
\end{lemma}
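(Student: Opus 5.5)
The plan is to verify the resolvent identity \eqref{eq:weyl-resolvent-plus} directly. I will show that the explicit sequence $V=V_{\lambda,+}[p]$ lies in $L^2_{\uloc}(\R;\ell^2)$ and solves $(2\lambda+iA_p^+)V=\bar pe_1$ in $\mathcal D'(\R;\ell^2)$. The uniqueness part of Lemma~\ref{lem:uloc-resolvent-factorization}(i) then identifies $V$ with the canonical realization $R_{2\lambda,1}(\bar pe_1)$, applied to $B=A_p^+$ with $c=0$ and $\epsilon=1$. The hypotheses are met because, as noted after \eqref{eq:Aplus}, $A_p^+=-i\mathsf N^{-1}\partial_x+W$ with $W$ bounded, self-adjoint and of norm at most $2\|p\|_\infty$.

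\emph{Membership.} By Proposition~\ref{prop:normalized}, $|g_{\lambda,+}|\le r=r_\lambda(\|p\|_\infty)<1$. Hence $|V(x)|_{\ell^2}^2\le r^2/(1-r^2)$, so $V\in L^\infty(\R;\ell^2)\subset L^2_{\uloc}$, and $\bar pe_1\in L^2_{\uloc}$ trivially.

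\emph{The equation.} Since $g=g_{\lambda,+}$ is locally Lipschitz and satisfies the Riccati equation \eqref{eq:riccati-plus}, each power $g^m$ is locally Lipschitz. Using $iD=\partial_x$, we get almost everywhere
\[
 \tfrac{i D}{m}g^m=g^{m-1}g_x=\bar pg^{m-1}-2\lambda g^m-pg^{m+1}.
\]
The $m$-th component of $(2\lambda+iA_p^+)V$ is therefore
\[
 2\lambda g^m+\bigl(\bar pg^{m-1}-2\lambda g^m-pg^{m+1}\bigr)-\bar pX_{m-1}+pg^{m+1},
 \qquad X_{m-1}=g^{m-1}\ \ (m\ge2),\quad X_0=0 .
\]
For $m\ge2$ this vanishes. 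For $m=1$ only $\bar pg^0=\bar p$ survives, because the convention $X_0=0$ removes the shift term. Thus componentwise $(2\lambda+iA_p^+)V=\bar pe_1$.

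The main technical point is upgrading this componentwise identity to an identity in $\mathcal D'(\R;\ell^2)$, in the sense used in Lemma~\ref{lem:uloc-resolvent-factorization}. The transport part $\mathsf N^{-1}\partial_x V$ has components $g^{m-1}g_x$, which are bounded by $C r^{m-1}$ uniformly in $x$. Hence the $\ell^2$-valued series converge absolutely in $L^\infty_{\rm loc}$, and pairing with $\ell^2$-valued test functions can be done term by term. No domain issue arises, since we never claim $V$ lies in the global $L^2$ domain. Uniqueness then gives $V=R_{2\lambda,1}(\bar pe_1)$.

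\emph{Consequences.} For \eqref{eq:determinant-fock-plus}, apply the first identity at $\lambda_a=a\kappa$, i.e.\ with the resolvents $R_{2a\kappa,1}$, and sum with weights $d_a$. The scalar partial fractions in \eqref{eq:Bnr-quadratic} give
\[
 h_\kappa(s)=\sum_ad_a(2\lambda_a+is)^{-1}.
\]
By the spectral theorem on $L^2$ and cutoff exhaustion \eqref{eq:resolvent-exhaustion}--\eqref{eq:h-exhaustion}, this partial-fraction sum agrees on $L^2_{\uloc}$ with the factorized $h_\kappa(A_p^+)$ of \eqref{eq:uloc-h-factorization}. Hence $(S_m)=h_\kappa(A_p^+)(\bar pe_1)$.

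For $T$, the scalar identity
\[
 \frac{\lambda}{2\lambda+is}=\frac12-\frac{is}{2}\,\frac1{2\lambda+is},
\]
together with $\sum_ad_a=0$, gives
\[
 \sum_ad_a\lambda_a(2\lambda_a+is)^{-1}=t_\kappa(s).
\]
So $t_\kappa(A_p^+)$ is realized on $L^2_{\uloc}$ as the finite combination $\sum_a d_a\lambda_aR_{2\lambda_a,1}$. It is consistent with the exhaustion definition of Lemma~\ref{lem:ulocfc}, since $t_\kappa$ has Fourier measure with all moments as a combination of one-sided exponentials. Applying this combination to $\bar pe_1$ yields $(T_m)=t_\kappa(A_p^+)(\bar pe_1)$.
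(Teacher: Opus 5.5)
Your proposal is correct and follows essentially the same route as the paper. You use the Schur bound for uniformly local membership, multiply the Riccati equation by $g^{m-1}$ to obtain $(2\lambda+iA_p^+)V_{\lambda,+}=\bar pe_1$ componentwise, invoke the uniqueness in Lemma~\ref{lem:uloc-resolvent-factorization}(i), and sum over the three heights with the partial-fraction identities for $h_\kappa$ and $t_\kappa$. Your extra checks are details the paper leaves implicit: the upgrade to an identity in $\mathcal D'(\R;\ell^2)$ via the geometric majorant, and the agreement of the partial-fraction combinations with the factorized and exhaustion realizations.
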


\begin{proof}
The Schur bound gives
\[
 \sum_{m\ge1}|g_{\lambda,+}|^{2m}
 \le\frac{r_\lambda(\|p\|_\infty)^2}
          {1-r_\lambda(\|p\|_\infty)^2}
 \le\frac{\|p\|_\infty}{2\lambda}.
\]
Thus $V_{\lambda,+}\in L^2_{\uloc}$.  Multiplication of the Riccati
equation by $g^{m-1}$ gives, componentwise,
\begin{equation}\label{eq:moment-plus}
 (2\lambda+iA_p^+)V_{\lambda,+}=\bar p e_1.
\end{equation}
For $m\ge2$ the neighboring powers cancel the shift terms; at $m=1$ the
missing grade zero leaves the source.  Uniformly local uniqueness in
Lemma~\ref{lem:uloc-resolvent-factorization} identifies the solution with
the displayed resolvent.  Sum over the three heights and use
\[
 \sum_a\frac{d_a}{2\lambda_a+is}=h_\kappa(s),\qquad
 \sum_a\frac{d_a\lambda_a}{2\lambda_a+is}
 =-\frac{is}{2}h_\kappa(s),
\]
where the second identity uses $\sum_ad_a=0$.
\end{proof}

The unitary gauge $(\mathcal U_nX)_m=e^{imnx}X_m$ gives
\begin{equation}\label{eq:covariance-plus}
 \mathcal U_n^{-1}A_{M_nq}^+\mathcal U_n=A_q^++n,
 \qquad \mathcal U_n^{-1}(\overline{M_nq}\,e_1)=\bar q e_1.
\end{equation}
Indeed, differentiating the grade-$m$ phase gives $mn$, which becomes
$n$ after division by $m$, and the phases in both shifts cancel.
Define the gauged features
\begin{equation}\label{eq:features-plus}
 F_n^+=h_\kappa(A_q^++n)(\bar q e_1),\qquad
 G_n^+=t_\kappa(A_q^++n)(\bar q e_1).
\end{equation}
Their norms are
\begin{equation}\label{eq:b-feature}
 \beta_{n,R}=\|F_n^+\|_{U_R},\qquad
 \gamma_{n,R}=\|G_n^+\|_{U_R}.
\end{equation}
For dynamic windows the centers are translated by $2nt$ as in
\eqref{eq:static-dynamic-window}; the spatial suprema are unchanged.

\begin{proposition}[Feature summability]\label{prop:feature-summability}
For every $q\in M$ and fixed $R\ge1$,
\begin{equation}\label{eq:feature-summability}
 \sum_n(\beta_{n,R}+\gamma_{n,R})<\infty.
\end{equation}
In particular, $K_R(q)<\infty$.
\end{proposition}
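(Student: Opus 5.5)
We prove the summability by a resolvent expansion in powers of $q$. Frequency localization of $q$ then produces the decay in $n$ through the filter $h_\kappa(s+n)\sim\langle n\rangle^{-3}$.

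\emph{Step 1: reduce to fixed $R$ and unit windows.} By Lemma~\ref{lem:windows}, $\norm F_{U_R}^2\le CR\norm F_{U_1}^2$, so it suffices to show $\sum_n(\|F_n^+\|_{U_1}+\|G_n^+\|_{U_1})<\infty$ with a constant depending on $R$.

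\emph{Step 2: split off the free part.} We use the factorization \eqref{eq:uloc-h-factorization}, which writes $h_\kappa(A_q^++n)=8\kappa^2\prod_a R_{2a\kappa}$ with $R_{c}=(c+i(A_q^++n))^{-1}$. The resolvent identity with respect to $A_0+n$ reads
\[
R_c=R_c^0-R_c^0\,iP\,R_c,
\qquad
(PX)_m=i\bar qX_{m-1}-iqX_{m+1}.
\]
Substituting this into each factor of the product gives a leading free term plus remainders. The free term is $h_\kappa(D+n)\bar q$ in grade one. Since $\bar q=\sum_j\square_j\bar q$, the contribution at frequency $j$ has symbol size $\lesssim\langle n+j\rangle^{-3}$ on the box $\supp\varphi(\cdot-j)$. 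By \eqref{eq:modulation-uloc}, together with Lemma~\ref{lem:ulocfc} to handle the multiplier on uniformly local data (up to a Schwartz tail in the box), this gives
\[
\|h_\kappa(D+n)\bar q\|_{U_1}\lesssim\sum_j\langle n+j\rangle^{-3}\|\square_j\bar q\|_\infty .
\]
The right-hand side is summable in $n$, with sum $\lesssim\|q\|_M$. The filter $t_\kappa(s)=-\tfrac{is}2h_\kappa(s)$ decays only like $\langle s\rangle^{-2}$, but this is still summable, so the same bound holds for $G_n^+$.

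\emph{Step 3: the remainder.} This is the main obstacle. After one expansion, every remainder term has the form
\[
R^0_{c_1}\,P\,(\text{bounded operator})\,\bar q e_1 ,
\]
where the outer free resolvent $R^0_{c_1}=(c_1+i(D/m+n))^{-1}$ acts at grade $m\ge2$. In grade $m$ its symbol is $(c_1+i(\xi/m+n))^{-1}$, so decay in $n$ requires frequency control of the input at scale $m$. I would iterate the Duhamel-type expansion to obtain an explicit grade-$m$ series. At grade $m$ the exact feature is $\sum_a d_a g_a^m$, and by the Schur bound \eqref{eq:Schur} it is geometrically small in $m$. To exploit this I would use the exact components rather than the expansion: the pre-gauge features are $S_m[M_{-n}q]$-type functions. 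Equation \eqref{eq:S-spatial} states that $S_m$ solves a first-order equation with coefficient $2mT_m$-type terms. Since $\mathcal U_n$ shifts by $mn$ at grade $m$, this should give
\[
\|F^+_{n,m}\|\lesssim r^{m}\min\bigl(1,(mn)^{-1}\|q\|_M\text{-weighted}\bigr).
\]
Alternatively, I would truncate $q$ to finite frequencies $q^{(M_0)}$ via Lemma~\ref{lem:frequency-truncation}. Then a nonstationary-phase argument, namely repeated integration by parts of the multiplier $(c+i(D/m+n))^{-1}$ against inputs with Fourier support in $|\xi|\lesssim mM_0$, gives $O(\langle n\rangle^{-2})$ decay whenever $|n|\gg M_0$, uniformly in $m$ up to the factor $r^m$. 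The tail $q-q^{(M_0)}$ is controlled linearly in its $M$-norm through Step 2 combined with Lipschitz dependence of $F_n^+$ on $q$. For fixed $q$, summability over $n$ is all that is required, not a uniform estimate, so this soft truncation argument suffices.

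\emph{Step 4: conclude $K_R(q)<\infty$.} Finiteness of $K_R(q)$ follows from \eqref{eq:k-beta}, since $k_R\lesssim\sum_n\beta_{n,R}$.
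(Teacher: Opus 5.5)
Your Steps 1, 2 and 4 are fine, and Step 2 is essentially the linear part of the paper's argument. The gap is in Step 3, and neither route you sketch closes it.

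\textbf{The first route.} The bound $\|F^+_{n,m}\|\lesssim r^m\min(1,C/(m|n|))$ you aim for is at best linear in a resonance envelope such as $A_n=\sum_j a_j/(\kappa+|j-n|)$, where $a_j=\|\square_jq\|_\infty$. Such an envelope is in general not summable in $n$: a single frequency box already gives $A_n\sim a_0/|n|$.

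\textbf{The truncation route.} Lipschitz dependence of each $F_n^+$ on $q$ controls $F_n^+[q]-F_n^+[q^{(M_0)}]$ only uniformly in $n$, that is, in $\ell^\infty_n$. Step 2 gives $\ell^1_n$ control only of the \emph{linear} part of this difference. Nothing bounds the $n$-sum of the nonlinear part, and that is where the difficulty lies. The tail $q-q^{(M_0)}$ carries mass at arbitrarily large frequencies $j$ and resonates with the indices $n\approx j$. Those are exactly the indices where your large-$|n|$ decay for the band-limited part is unavailable. The $\ell^1_n$-continuity you would need is proved in the paper only afterwards, in Proposition~\ref{prop:feature-continuity}, and it uses the envelope estimate described below. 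In addition, your Fourier-support claim at grade $m$ is false. In the Born series the grade-$m$ component collects words in $q,\bar q$ of every length $m+2k$, and the exact component $\sum_a d_ag_a^m$ is not band-limited at all.

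\textbf{The missing idea.} The nonlinear remainder is \emph{quadratic} in $A_n$. Since $A=v_\kappa*a$ with $v_\kappa(n)=(\kappa+|n|)^{-1}\in\ell^2$, Young's inequality gives $\|A\|_{\ell^2}\le C_\kappa N$, where $N=\|q\|_M$, and hence $\sum_nA_n^2<\infty$.

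The paper obtains this without any Born series, as follows.
\begin{enumerate}
\item For $p_n=M_nq$ it solves the Riccati equation as the fixed point of $u\mapsto\mathcal R_\lambda(\overline{p_n}-p_nu^2)$ in the algebra $M$. Here $\mathcal R_\lambda=(2\lambda+\partial_x)^{-1}$ and $\|\mathcal R_\lambda\overline{p_n}\|_M\le C_\kappa A_n$.
\item When $NA_n\le c_\kappa$ this map is a contraction. It yields $\|g_{\lambda,+}[p_n]-\mathcal R_\lambda\overline{p_n}\|_M\le C_\kappa NA_n^2$ and $\|S_m\|_M+\|T_m\|_M\le C_\kappa(C_\kappa A_n)^m$.
\item Consequently, at every such good index, $\beta_{n,1}+\gamma_{n,1}\le C_\kappa\sum_j\langle n-j\rangle^{-2}a_j+C_\kappa(1+N)A_n^2$, which is summable in $n$.
\item Because $A\in\ell^2$, the exceptional set $\{n:NA_n>c_\kappa\}$ has at most $C_\kappa N^4$ elements. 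On it the disk bound gives $\beta_{n,1}+\gamma_{n,1}\le C\sqrt N$.
\end{enumerate}

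This split into good and exceptional indices, driven by the $\ell^2$ envelope, is what makes large data with infinitely many frequency boxes tractable. Your proposal has no substitute for it.
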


\begin{proof}
It suffices to take $R=1$ by Lemma~\ref{lem:windows}.  Put
$a_j=\|\square_jq\|_\infty$, $N=\sum_ja_j$, and
\begin{equation}\label{eq:tail-envelope}
 A_n=\sum_j\frac{a_j}{\kappa+|j-n|}.
\end{equation}
The case $N=0$ is immediate.  For $p_n=M_nq$ let
$\mathcal R_\lambda=(2\lambda+\partial_x)^{-1}$, whose one-sided
convolution kernel has $L^1$ norm $(2\lambda)^{-1}$.  Thus, at the three
fixed heights,
\begin{equation}\label{eq:riccati-resolvent-bounds}
 \|\mathcal R_\lambda f\|_M\le C_\kappa\|f\|_M,
 \qquad \|\mathcal R_\lambda\overline{p_n}\|_M\le C_\kappa A_n.
\end{equation}
For the second bound, the localized multiplier on the conjugate of the
$j$th input box has size $O_\kappa((\kappa+|j-n|)^{-1})$, as do its first
two derivatives.  The compactly supported multiplier estimate
$\|\check a\|_1\lesssim\|a\|_1+\|a''\|_1$ gives the same bound for
its kernel.  Summing the boxes proves the assertion.

When $NA_n\le c_\kappa$, the Riccati map
\[
 u\longmapsto\mathcal R_\lambda(\overline{p_n}-p_nu^2)
\]
is a contraction on a ball of radius $C_\kappa A_n$ in $M$.
Indeed, the map and its difference are bounded by
$C_\kappa A_n+C_\kappa N\|u\|_M^2$ and
$C_\kappa N(\|u\|_M+\|v\|_M)\|u-v\|_M$, respectively.
Since $A_n\le N/\kappa$ and $A_n\le c_\kappa/N$, we also have
$A_n^2\le c_\kappa/\kappa$; decrease $c_\kappa$ to make the ball
uniformly small.  The fixed point is the Weyl ratio: solve
$a_x=(\lambda+p_nu)a$, $b=ua$.  The good-index condition gives
$\|p_nu\|_\infty\le\lambda/2$, so $(a,b)$ is square integrable at
$-\infty$.  Uniqueness of the Weyl line identifies $u$.
It follows that
\begin{equation}\label{eq:riccati-error-bounds}
 \|g_{\lambda,+}[p_n]\|_M\le C_\kappa A_n,
 \qquad \|g_{\lambda,+}[p_n]-\mathcal R_\lambda\overline{p_n}\|_M
 \le C_\kappa NA_n^2.
\end{equation}

The linear approximations to $S_1$ and $T_1$ are obtained by summing
$\mathcal R_{\lambda_a}\overline{p_n}$ with coefficients $d_a$ and
$d_a\lambda_a$, respectively.  Their Fourier multipliers are
\[
 \sum_a\frac{d_a}{2\lambda_a+i\xi}
 =\frac{8\kappa^2}{(2\kappa+i\xi)(4\kappa+i\xi)(6\kappa+i\xi)},
 \qquad
 \sum_a\frac{d_a\lambda_a}{2\lambda_a+i\xi}
 =-\frac{i\xi}{2}\sum_a\frac{d_a}{2\lambda_a+i\xi}.
\]
Thus a frequency box centered at $j$ contributes with weights bounded
by $C_\kappa\langle j-n\rangle^{-3}$ for $S_1$ and
$C_\kappa\langle j-n\rangle^{-2}$ for $T_1$.  Applying the same
localized multiplier estimate and adding the nonlinear remainder gives
\begin{align}
 \|S_1[p_n]\|_{U_1}&\le C_\kappa(k_3*a)_n+C_\kappa NA_n^2,
 \label{eq:first-grade-feature}\\
 \|T_1[p_n]\|_{U_1}&\le C_\kappa(k_2*a)_n+C_\kappa NA_n^2,
 \label{eq:first-grade-companion}
\end{align}
where $k_r(j)=\langle j\rangle^{-r}$.
The algebra estimate gives higher-grade bounds
\[
 \|S_m[p_n]\|_M+\|T_m[p_n]\|_M\le C_\kappa(C_\kappa A_n)^m.
\]
Choose the good threshold so that $C_\kappa A_n\le1/2$.  Then
\begin{equation}\label{eq:higher-grade-feature}
 \left(\sum_{m\ge2}
   (\|S_m[p_n]\|_{U_1}^2+\|T_m[p_n]\|_{U_1}^2)\right)^{1/2}
 \le C_\kappa A_n^2.
\end{equation}
Combining these estimates gives, at every good index,
\begin{equation}\label{eq:good-feature-envelope}
 \beta_{n,1}+\gamma_{n,1}
 \le C_\kappa(k_2*a)_n+C_\kappa(1+N)A_n^2.
\end{equation}
Discrete Young's inequality gives $\|A\|_{\ell^2}\le C_\kappa N$.
Thus the exceptional set $\{n:NA_n>c_\kappa\}$ has at most
$C_\kappa N^4$ elements.  On that set the disk bound directly gives
$\beta_{n,1}+\gamma_{n,1}\le C_{\kappa,\chi}\sqrt N$.
Summing there and using \eqref{eq:good-feature-envelope} on its complement
proves the result.  No uniform smallness of $q$ is assumed.
\end{proof}

\begin{proposition}[Continuity and uniform tails of the features]
\label{prop:feature-continuity}
For every fixed $R\ge1$, the map
\begin{equation}\label{eq:feature-l1-continuity}
 q\longmapsto
 \bigl((\beta_{n,R}(q))_n,(\gamma_{n,R}(q))_n\bigr)
\end{equation}
is continuous from $M$ to $\ell^1(\Z)\times\ell^1(\Z)$.  In particular,
$K_R:M\to[0,\infty)$ is continuous.  If $q(t)\in C(I;M)$ and $I_0$ is a
compact subinterval of $I$, then the $\ell^1$ tails of both feature
sequences tend to zero uniformly for $t\in I_0$.
\end{proposition}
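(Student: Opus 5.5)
The plan is to combine pointwise continuity of each feature norm with the quantitative tail bound from the proof of Proposition~\ref{prop:feature-summability}, and conclude by a dominated (Vitali-type) convergence argument in $\ell^1$.

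\emph{Step 1: single index $n$.} For fixed $n$, I first show that $\beta_{n,R}(q)$ and $\gamma_{n,R}(q)$ are continuous in $q\in M$. If $q_j\to q$ in $M$, then by \eqref{eq:Membed} $q_j\to q$ in $L^\infty$ with a common bound $Q$. Proposition~\ref{prop:normalized} gives uniform convergence of $g_{\lambda_a,+}[M_nq_j]$ at the three heights. The disk bound \eqref{eq:Schur} supplies the uniform geometric majorant $C_\kappa r_\kappa(Q)^m$. Hence $(S_m)_m$ and $(T_m)_m$ converge in $L^\infty(\R;\ell^2)$, exactly as in the proof of Proposition~\ref{prop:micro-law}. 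Since $\|\chi_{R,y}\|_2\lesssim R^{1/2}$ uniformly in $y$, the quantities $b_{n,R}(y)$ and $c_{n,R}(y)$ converge uniformly in $y$. Therefore their suprema converge.

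\emph{Step 2: uniform tails.} I extract from the proof of Proposition~\ref{prop:feature-summability} a domination that is uniform on $M$-neighbourhoods. By Lemma~\ref{lem:windows} it suffices to take $R=1$, at the cost of a factor $C R^{1/2}$. That proof bounds $\beta_{n,1}+\gamma_{n,1}$ by the envelope \eqref{eq:good-feature-envelope} on good indices and by $C\sqrt N$ on at most $C_\kappa N^4$ exceptional indices. The exceptional indices, however, move with $q$, so I would refine the argument.

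Write $a^{(j)}$ for the box sequence of $q_j$. Then $\|a^{(j)}-a\|_{\ell^1}\le C\|q_j-q\|_M\to0$ (up to box-overlap constants, cf. \eqref{eq:truncation-uniform-bound}), so $a^{(j)}\to a$ in $\ell^1$. Consequently $k_2*a^{(j)}\to k_2*a$ in $\ell^1$, and $A^{(j)}\to A$ in $\ell^2$ by Young's inequality. The family of envelopes $C(k_2*a^{(j)})_n+C(1+N_j)(A^{(j)}_n)^2$ therefore converges in $\ell^1$ and so has uniformly small tails. The exceptional sets satisfy $\{N_jA^{(j)}_n>c_\kappa\}\subset\{A_n>c_\kappa/(2N_j)\}\cup\{|A^{(j)}_n-A_n|>c_\kappa/(2N_j)\}$. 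The first set is finite and fixed. The second has cardinality at most $C N_j^2\|A^{(j)}-A\|_{\ell^2}^2\to0$, so it is eventually empty. Thus for large $j$ all exceptional indices lie in a fixed finite set $E$. Outside $E$ the features are dominated by an $\ell^1$-convergent family. The Brezis--Lieb/Vitali argument, together with the pointwise convergence of Step 1, then gives $\ell^1$ convergence. Continuity of $K_R$ follows because the summand map in \eqref{eq:exact-summand} is $1$-Lipschitz in $\beta$ up to a constant.

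\emph{Step 3: time-uniform tails.} For $q\in C(I;M)$, the image $q(I_0)$ is compact in $M$. A continuous map into $\ell^1$ sends it to a compact set, and compact subsets of $\ell^1$ have uniformly small tails.

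\emph{Main obstacle.} The hardest step is Step 2, namely controlling the moving exceptional set. The key is the $\ell^2$ convergence of $A^{(j)}$, and I would have to check that the constants in \eqref{eq:good-feature-envelope} depend only on an upper bound for $N$.
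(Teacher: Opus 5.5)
Your proposal is correct and follows essentially the paper's argument: the paper also combines the good-index envelope \eqref{eq:good-feature-envelope} with $\ell^1$ convergence of $k_2*a$ and $\ell^2$ convergence of $A$, uses $\|A'-A\|_{\ell^\infty}\le\|A'-A\|_{\ell^2}$ (the same point as your Chebyshev count) to make every index outside a fixed finite set good, and uses the same compactness argument for tails that are uniform in time. The only difference is the single-index step: the paper applies the resolvent identity to the three factors of $h_\kappa(A_q^++n)$ on $U_R$, while you use the uniform continuity of the Weyl ratios from Proposition~\ref{prop:normalized} together with the geometric majorant. Both routes work, and your caveat about the constants is harmless, since the $N$-dependence in \eqref{eq:good-feature-envelope} is explicit.
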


\begin{proof}
For a fixed $n$, continuity in $U_R$ follows from the resolvent identity
for each of the three factors in $h_\kappa(A_q^++n)$.  The multiplier $t_\kappa$ is a finite linear combination of the same
resolvents by the identities in the proof of
Lemma~\ref{lem:fock-realization}.  The source converges in $U_R$, multiplication
by the difference of two potentials has norm controlled by their
$L^\infty$ distance, and Lemma~\ref{lem:ulocfc} bounds every remaining
resolvent.  Taking a uniformly local norm and then a spatial supremum shows
that each $\beta_{n,R}$ and $\gamma_{n,R}$ is continuous.  Concretely,
\[
 |\beta_{n,R}(q')-\beta_{n,R}(q)|
 \le \|F_n^+(q')-F_n^+(q)\|_{U_R},
\]
and the analogous inequality holds for $\gamma$.  Thus the supremum over all
translations $y$ is already part of the norm estimate and does not require
pointwise convergence uniformized afterward.

It remains to make this uniform in the tail.  Let $q'$ range in a small
$M$-neighborhood of $q$, and use primes for the sequences in
\eqref{eq:tail-envelope}.  There is a common bound $N_*$.  Moreover,
\begin{align*}
 \norm{k_2*(a'-a)}_{\ell^1}&\le C\norm{q'-q}_M,\\
 \norm{A'-A}_{\ell^2}&\le C_\kappa\norm{q'-q}_M.
\end{align*}
Choose a finite set $F$ so that the $\ell^1$ tail of $k_2*a$, the $\ell^2$
tail of $A$, and $N_*\sup_{n\notin F}A_n$ are all sufficiently small.
Since $\norm{A'-A}_{\ell^\infty}\le\norm{A'-A}_{\ell^2}$, after shrinking
the neighborhood every index outside $F$ obeys
the good condition $N_*A_n'\le c_\kappa$.  The estimate
\eqref{eq:good-feature-envelope}, together with
\[
 \norm{(A')^2-A^2}_{\ell^1}
 \le\norm{A'-A}_{\ell^2}(\norm{A'}_{\ell^2}+\norm A_{\ell^2}),
\]
now makes the feature tails uniformly small.  Continuity on the finite set
$F$ proves \eqref{eq:feature-l1-continuity} for $R=1$, and
Lemma~\ref{lem:windows} proves it for fixed $R$.

By \eqref{eq:exact-summand}, the scalar calibration is a Lipschitz
function of $\beta_{n,R}$, uniformly in $n$; hence $K_R$ is continuous.
Finally, the continuous image of a compact time interval in $\ell^1$ is
compact.  To see that a compact subset has uniformly vanishing tails, cover
it by a finite $\varepsilon$-net and choose one finite tail cutoff that works
for every center of the net.
\end{proof}

Choose once and for all a real nonnegative
$\theta\in C_c^\infty(\R)$ with $\sum_n\theta(s+n)=1$; the function
$\varphi$ in Section~\ref{sec:main} is one choice.  Since $h_\kappa$ has
no real zeros, define
\begin{equation}\label{eq:rn}
 r_n(s)=\frac{\theta(s+n)}{h_\kappa(s+n)},\qquad
 \phi_n(s)=r_n(s)h_\kappa(s+n)=\theta(s+n).
\end{equation}
These symbols are translates of fixed smooth compactly supported functions.
For a Fourier--Stieltjes multiplier
$a(s)=\int e^{-its}\dd\mu_a(t)$, write
\[
 \|a\|_{\mathcal W_\sigma}
 =\int(1+|t|)^\sigma\dd|\mu_a|(t).
\]
For the smooth symbols here,
$\dd\mu_a(t)=(2\pi)^{-1}\widehat a(-t)\dd t$.

\begin{lemma}[Localized spectral multipliers]\label{lem:rational-overlap}
Uniformly in $n,k,\ell\in\Z$,
\begin{align}
 \|(\cdot+n)^r r_n\|_{\mathcal W_1}&\le C_\kappa,
       \qquad r=0,1,2,\label{eq:rn-wiener}\\
 \|h_\kappa(\cdot+k)\phi_n\|_{\mathcal W_{1/2}}
       &\le C_\kappa\langle n-k\rangle^{-3},\label{eq:frame-overlap}\\
 \|t_\kappa(\cdot+n)r_\ell\|_{\mathcal W_1}
       &\le C_\kappa\langle n-\ell\rangle^{-2}.
       \label{eq:companion-wiener}
\end{align}
\end{lemma}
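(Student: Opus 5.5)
The plan is to reduce all three estimates to one elementary sufficient condition for Wiener-type norms, and then to verify that condition by translating everything to a fixed frame and using the explicit rational form of $h_\kappa$ and $t_\kappa$.

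\textbf{Step 1: reduction to derivative bounds.} For a symbol $a$ we have $\dd\mu_a(t)=(2\pi)^{-1}\widehat a(-t)\dd t$. The weighted Cauchy--Schwarz inequality then gives
\[
 \|a\|_{\mathcal W_\sigma}\le C\Bigl(\int(1+|t|)^{2\sigma+2}|\widehat a(t)|^2\dd t\Bigr)^{1/2}
 \le C\bigl(\|a\|_{L^2}+\|a''\|_{L^2}\bigr),
\]
valid for $\sigma\le 1$ because $2\sigma+2\le 4$. If $a$ is supported in an interval of fixed length, this is further bounded by $C(\|a\|_\infty+\|a''\|_\infty)$. It therefore suffices to show that each symbol in the statement has sup norm, together with its first two derivatives, bounded by the claimed decay factor on a support of bounded length.

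\textbf{Step 2: translation to a fixed frame.} Modulation by $e^{-itn}$ does not change $|\mu_a|$, so translating the variable $s$ leaves every $\mathcal W_\sigma$ norm invariant. Substituting $u=s+n$, estimate \eqref{eq:rn-wiener} becomes a statement about the single $n$-independent function $u^r\theta(u)/h_\kappa(u)$. Since $h_\kappa$ has no real zeros, this function is smooth and compactly supported, so the bound is immediate.

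\textbf{Step 3: the overlap estimate \eqref{eq:frame-overlap}.} After the same substitution, the symbol is $h_\kappa(u+k-n)\theta(u)$, with $u$ ranging over $\supp\theta$. Set $m=k-n$. For $|u|\le C$, the explicit formula shows that $|\partial_u^j h_\kappa(u+m)|\le C_\kappa\langle m\rangle^{-3-j}\le C_\kappa\langle m\rangle^{-3}$ for $j=0,1,2$. The Leibniz rule with the fixed derivatives of $\theta$ then gives derivative bounds of order $\langle m\rangle^{-3}$, and Step 1 concludes.

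\textbf{Step 4: the companion estimate \eqref{eq:companion-wiener}.} Write $t_\kappa(s+n)r_\ell(s)=t_\kappa(s+n)\theta(s+\ell)/h_\kappa(s+\ell)$ and set $u=s+\ell$ and $m=n-\ell$. The symbol becomes $t_\kappa(u+m)\,\theta(u)/h_\kappa(u)$. The second factor is a fixed smooth compactly supported function. For the first factor, $t_\kappa(v)=-\tfrac{iv}{2}h_\kappa(v)$ satisfies $|\partial^j t_\kappa(v)|\le C_\kappa\langle v\rangle^{-2-j}$. Hence the symbol has its first two derivatives bounded by $C_\kappa\langle m\rangle^{-2}$ on $\supp\theta$, and Step 1 gives the claim.

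\textbf{Main obstacle.} The only step requiring care is the derivative decay of the rational functions, uniformly in $\kappa$-dependent constants. Each derivative of a factor $(2a\kappa+iv)^{-1}$ gains one power of $\langle v\rangle^{-1}$, and $|2a\kappa+iv|\ge c_\kappa\langle v\rangle$. Differentiating the product form of $h_\kappa$ term by term therefore gives these derivative bounds.
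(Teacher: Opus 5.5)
Your proposal is correct and follows essentially the same route as the paper. Both arguments reduce to derivative bounds for the translated, compactly supported products, use that translation only modulates the measure and so leaves $\|\cdot\|_{\mathcal W_\sigma}$ unchanged, and read off the $\langle n-k\rangle^{-3}$ and $\langle n-\ell\rangle^{-2}$ decay from the rational structure of $h_\kappa$ and $t_\kappa$ on a bounded support.

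The only difference is cosmetic. You bound $\|a\|_{\mathcal W_\sigma}$ by Cauchy--Schwarz and Plancherel, which needs two $L^2$ derivatives. The paper instead uses four integrations by parts and the bound $\sum_{j\le 4}\|a^{(j)}\|_1$.
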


\begin{proof}
For a smooth symbol supported in a fixed compact interval, Fourier
inversion and four integrations by parts give
\[
 \|a\|_{\mathcal W_1}
 \lesssim\sum_{j=0}^4\|a^{(j)}\|_1.
\]
Translation of the symbol only modulates its Fourier transform.  The first
bound therefore follows by translating $s+n$ to zero.  On the support of
$\phi_n$, $s+n$ is bounded and every derivative of $h_\kappa(s+k)$ is
$O_\kappa(\langle k-n\rangle^{-3})$.  On the support of $r_\ell$, every
derivative of $t_\kappa(s+n)$ is
$O_\kappa(\langle n-\ell\rangle^{-2})$.  Apply the displayed estimate
to the translated products.  This also proves all fixed polynomial-moment
variants used below.
\end{proof}

\begin{proposition}[Endpoint dual frame]\label{prop:dual-frame}
For every $q\in M$,
\begin{equation}\label{eq:dual-frame-plus}
 \bar q e_1=\sum_n r_n(A_q^+)F_n^+
\end{equation}
with absolute convergence in $L^2_{\uloc}(\R;\ell^2)$.
\end{proposition}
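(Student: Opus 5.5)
The plan is to reduce the identity to the scalar partition of unity $\sum_n\theta(s+n)=1$, applied through the uniformly local functional calculus of Lemma~\ref{lem:ulocfc}.

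\textbf{Step 1: termwise identity.} By \eqref{eq:features-plus} and Lemma~\ref{lem:uloc-resolvent-factorization}, $F_n^+=h_\kappa(A_q^++n)(\bar qe_1)$ is the product of three uniformly local resolvents of $B=A_q^++n$. The symbol $r_n$ is a translate of a fixed smooth compactly supported function, so by \eqref{eq:rn-wiener} $r_n(A_q^+)$ has the bounded uniformly local realization of Lemma~\ref{lem:ulocfc}. I will show
\[
 r_n(A_q^+)F_n^+=\phi_n(A_q^+)(\bar qe_1),\qquad \phi_n(s)=\theta(s+n).
\]
On global $L^2$ this is the spectral theorem for the product $r_n(s)h_\kappa(s+n)=\phi_n(s)$, where all factors are functions of the same self-adjoint operator. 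To pass to $L^2_{\uloc}$, apply it to $\eta_L\bar qe_1$ and take the cutoff limit. The exhaustion statements \eqref{eq:h-exhaustion} and the cutoff-tail argument of Lemma~\ref{lem:ulocfc} apply to the composition, because the Fourier measures of $r_n$ and $h_\kappa(\cdot+n)$ both have a finite $(1+|t|)^{1/2}$ moment. Their convolution therefore does too, and the composed operator equals the operator of the product symbol, uniformly in $n$.

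\textbf{Step 2: absolute convergence.} For this I would not rely on summing $\|\phi_n\|_{\mathcal W_{1/2}}$, which is only uniformly bounded and not summable. Instead I will bound $\|r_n(A_q^+)F_n^+\|_{U_1}\le C_\kappa\|F_n^+\|_{U_{c}}$ using \eqref{eq:uloccalculus} with \eqref{eq:rn-wiener}. By Lemma~\ref{lem:windows} and Proposition~\ref{prop:feature-summability}, the right-hand sides are summable in $n$, so the series converges absolutely in $L^2_{\uloc}$.

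\textbf{Step 3: identifying the sum.} It remains to show that $\sum_n\phi_n(A_q^+)(\bar qe_1)=\bar qe_1$ in $L^2_{\uloc}$. For a compactly supported source $G=\eta_L\bar qe_1\in L^2$, the spectral theorem gives $\sum_n\phi_n(A_q^+)G=G$ strongly in $L^2$, since $\sum_n\theta(s+n)=1$ pointwise and the partial sums are bounded by $\sup|\sum\theta|$. Restricted to a compact set $K$, we then exchange the limits in $L$ and in the sum over $n$. For each fixed $n$, the cutoff limit $L\to\infty$ converges on $K$ by Step 1. Uniformly in $L$, the tail over $|n|>N$ must be small on $K$; this is the main obstacle.

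\textbf{Handling the tail.} I plan to write $\phi_n(A_q^+)(\eta_L\bar qe_1)=r_n(A_q^+)h_\kappa(A_q^++n)(\eta_L\bar qe_1)$. I would then control $\|\one_Kh_\kappa(A_q^++n)(\eta_L\bar qe_1)\|$ uniformly in $L$ by a summable sequence. The idea is to rerun the envelope argument of Proposition~\ref{prop:feature-summability} with the truncated potential source. Alternatively, I would split $h_\kappa(A_q^++n)(\eta_L f)=\eta_LF_n^++[h_\kappa(A_q^++n),\eta_L]f$, noting that the commutator is $O(L^{-1})$ on $K$ and decays in $n$ by the resolvent-commutator structure. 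If this proves delicate, the fallback is to show the identity in distributions. Both sides are in $L^2_{\uloc}$; applying the injective operator $h_\kappa(A_q^+)$ from Lemma~\ref{lem:uloc-resolvent-factorization} reduces the claim to an identity whose summands, by \eqref{eq:frame-overlap}, carry the extra factor $\langle n\rangle^{-3}$. That factor makes the interchange of sum and cutoff limit trivially dominated. Injectivity then yields \eqref{eq:dual-frame-plus}.
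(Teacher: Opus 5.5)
Your fallback in Step~3 is exactly the paper's proof. The paper applies the injective operator $h_\kappa(A_q^++k)$ for one fixed $k$; you take $k=0$. In both, the overlap bound \eqref{eq:frame-overlap} makes $\sum_n h_\kappa\phi_n=h_\kappa$ converge in $\mathcal W_{1/2}$, so the uniformly local calculus gives $h_\kappa(A_q^+)(W-\bar q e_1)=0$, and injectivity finishes. Together with your Steps~1--2, which also match the paper, this is a complete proof.

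You should make the fallback the main argument. The direct route needs the tail over $|n|>N$ to be small on compact sets uniformly in the cutoff $L$. You have not proved that, and the proposed decay in $n$ of the commutator $[h_\kappa(A_q^++n),\eta_L]$ is not justified. That route is unnecessary anyway.
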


\begin{proof}
Put $A=A_q^+$ and $g=\bar q e_1$.  Feature summability and
\eqref{eq:rn-wiener} give an absolutely convergent sum $W$ on the right.
Products of the Fourier-measure multipliers agree with products of their
symbols on uniformly local data: apply Fubini to the group integrals and
use finite propagation and the integrable measure tails to pass through
spatial exhaustion.  Thus the finite partial sums are $\Phi_N(A)g$, where
$\Phi_N=\sum_{|n|\le N}\phi_n$.
For fixed $k$, the overlap bound and $\sum_n\phi_n=1$ give
\[
 \|h_\kappa(A+k)(g-\Phi_N(A)g)\|_{U_1}
 \le C_\kappa\|g\|_{U_1}
       \sum_{|n|>N}\langle n-k\rangle^{-3}\longrightarrow0.
\]
The scalar identity underlying this bound converges in
$\mathcal W_{1/2}$, so its use on uniformly local data is justified by
Lemma~\ref{lem:ulocfc}.  Passing to the limit gives
$h_\kappa(A+k)(g-W)=0$.  Injectivity from
Lemma~\ref{lem:uloc-resolvent-factorization} implies $W=g$.
\end{proof}

\section{Weighted resolvent observation and coercivity}
\label{sec:observation-coercivity}

The positive principal velocities of the graded operator give a stationary
estimate that controls the first component without any amplitude loss.

\begin{lemma}[Weighted resolvent observation]\label{lem:one-way}
Let $A=-i\mathsf N^{-1}\partial_x+W(x)$ with $W=W^*$ bounded and
strongly measurable.  For $a>0$, $c\in\R$, and $f\in L^2_{\uloc}$,
\begin{equation}\label{eq:spectral-observation}
 \|((a\pm i(A+c))^{-1}f)_1\|_\infty
 \le C_{a,\chi}\|f\|_{U_1}.
\end{equation}
The constant is independent of $W$ and $c$.
\end{lemma}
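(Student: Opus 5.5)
The plan is to prove \eqref{eq:spectral-observation} by a one-sided, exponentially weighted version of the local energy identity \eqref{eq:local-energy}, evaluated at a single point. The key structural fact is that the velocity operator is $\mathsf N^{-1}$ and its first diagonal entry equals one. Hence the quantity
\[
 \Phi(x)=\ip{\mathsf N^{-1}u(x)}{u(x)}_{\ell^2}=\sum_{m\ge1}\frac{|u_m(x)|^2}{m}
\]
satisfies $|u_1|^2\le\Phi\le|u|_{\ell^2}^2$. Here $u=(a\pm i(A+c))^{-1}f$ is the uniformly local realization from Lemma~\ref{lem:uloc-resolvent-factorization}. It therefore suffices to bound $\Phi(x_0)$ pointwise by $\|f\|_{U_1}^2$.

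First I derive the transport identity for $\Phi$. By \eqref{eq:uloc-resolvent-inverse}, in the case of the sign $+$ we have $(a+ic)u+\mathsf N^{-1}\partial_xu+iWu=f$ in distributions. Pairing with $u$ and taking real parts, the terms $ic$ and $iW$ drop out because $c$ is real and $W=W^*$. This gives
\[
 \partial_x\Phi=2\Rea\ip{u}{f}_{\ell^2}-2a|u|_{\ell^2}^2 .
\]
For the sign $-$ the right side changes sign. I treat the $+$ case by integrating from $-\infty$ up to $x_0$; the $-$ case is symmetric, integrating from $+\infty$ down to $x_0$.

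Next I fix $x_0$ and the weight $w(x)=e^{-a(x_0-x)}$ on $(-\infty,x_0]$, so that $w'=aw$. Then
\[
 \partial_x(w\Phi)=w\bigl(a\Phi+2\Rea\ip uf-2a|u|^2\bigr)\le w\bigl(2|u||f|-a|u|^2\bigr)\le \frac{w|f|^2}{a}.
\]
Integrating over $[x_r,x_0]$ gives $\Phi(x_0)\le w(x_r)\Phi(x_r)+a^{-1}\int_{-\infty}^{x_0}w|f|^2$. Because $u\in L^2_{\uloc}$, the function $w|u|^2$ is integrable on $(-\infty,x_0]$. Hence along some sequence $x_r\to-\infty$ we have $w\Phi(x_r)\to0$; this is the same sequence-selection device used in the Wronskian argument of Proposition~\ref{prop:green}. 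Splitting into unit intervals then yields
\[
 a^{-1}\int_{-\infty}^{x_0}e^{-a(x_0-x)}|f|^2\le C_{a,\chi}\|f\|_{U_1}^2 .
\]
This constant depends on neither $W$ nor $c$, which proves \eqref{eq:spectral-observation}.

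The main obstacle is justifying the pointwise identity for the infinite graded system, since $u$ is only a distributional $L^2_{\uloc}$ solution. Higher grades carry $\partial_xu_m=m(\cdots)$ with unbounded factors $m$, so $\Phi$ is not obviously absolutely continuous. I would first mollify in $x$ and then pass to the limit, exactly as in the uniqueness part of Lemma~\ref{lem:uloc-resolvent-factorization}. The constant operator $\mathsf N^{-1}$ commutes with convolution, and the commutator $W(u*\rho_\nu)-(Wu)*\rho_\nu\to0$ in $L^2_{\rm loc}$. I would also test the weighted inequality against a smooth cutoff approximating $\one_{(x_r,x_0]}$, so that it is first obtained in integrated form. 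Only at the end would I recover the pointwise value at $x_0$. This last step uses that $u_1$ itself is continuous: its equation is $\partial_xu_1=f_1-(a+ic)u_1-i(Wu)_1$ with right side in $L^2_{\rm loc}$, so $u_1\in H^1_{\rm loc}$. Since $\Phi$ and $|u_1|^2$ then agree with their Lebesgue-point values almost everywhere, the bound extends to every $x_0$ by continuity of $u_1$.
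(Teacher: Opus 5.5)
Your proposal is correct and follows the paper's proof essentially step for step: the same energy $\langle \mathsf N^{-1}u,u\rangle$ squeezed between $|u_1|^2$ and $|u|^2$, the same stationary identity $\Phi'=2\Rea\langle u,f\rangle-2a|u|^2$, an exponential weight on $(-\infty,x_0]$ (the paper's choice $\delta=a$), the boundary term killed along a sequence, summation over unit intervals, and reflection to $[x_0,\infty)$ for the minus sign. Your mollification argument is a harmless variant of the paper's cutoff justification. Once the distributional identity holds, its right side lies in $L^1_{\rm loc}$, so $\Phi$ itself has a locally absolutely continuous representative, as the paper notes, and your closing Lebesgue-point detour is unnecessary.
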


\begin{proof}
Let $V=\mathsf N^{-1}$ and $u=(a+i(A+c))^{-1}f$, using the uniformly
local resolvent.  Its equation is
$Vu_x+au+i(W+c)u=f$.  The stationary energy identity is
\[
 j'+2a|u|^2=2\Rea\langle u,f\rangle,
 \qquad j=\langle Vu,u\rangle,\qquad |u_1|^2\le j\le|u|^2.
\]
This holds distributionally for the graph-domain representatives.  One
justification is to localize $u$ with a compact spatial cutoff and use the
graph approximation from Lemma~\ref{lem:ulocfc}: $Vu_x$ is locally in
$L^2$ by the equation, and the bounded self-adjoint zeroth-order term
cancels in the real part.  The resulting identity shows that $j$ has a
locally absolutely continuous representative.  The first component of the
equation also gives $u_1\in H^1_{\mathrm{loc}}$.

Fix $0<\delta<2a$.  Integrate with weight $e^{\delta(y-x)}$ on
$(-\infty,x]$.  The weighted boundary term at $-\infty$ vanishes along
a sequence because $u\in L^2_{\uloc}$; all integrals converge by the
same exponential weight.  Since $0\le j\le|u|^2$,
\[
 j(x)+(2a-\delta)\int_{-\infty}^x e^{\delta(y-x)}|u(y)|^2\dd y
 \le2\Rea\int_{-\infty}^x
                 e^{\delta(y-x)}\langle u(y),f(y)\rangle\dd y.
\]
Young's inequality gives
\[
 |u_1(x)|^2\le j(x)
 \le\frac1{2a-\delta}\int_{-\infty}^x
                          e^{\delta(y-x)}|f(y)|^2\dd y.
\]
Choose $\delta=a$ and sum the exponentially weighted unit intervals.
Lemma~\ref{lem:windows} bounds the result by $C_{a,\chi}\|f\|_{U_1}^2$.
For the minus resolvent the energy equation is
$-j'+2a|u|^2=2\Rea\langle u,f\rangle$; integrate instead on
$[x,\infty)$ with weight $e^{-\delta(y-x)}$.
\end{proof}

\begin{proposition}[Nonlinear Bernstein estimate]\label{prop:observation}
For every $q\in M$, $n\in\Z$, and $R\ge1$,
\begin{equation}\label{eq:nonlinear-Bernstein}
 \|(r_n(A_q^+)F_n^+)_1\|_\infty\le C_{\kappa,\chi}\beta_{n,R}.
\end{equation}
\end{proposition}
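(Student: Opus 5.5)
The plan is to write $r_n$ as a combination of resolvents of $A_q^++n$ applied to a localized spectral multiplier, and then apply the weighted resolvent observation, Lemma~\ref{lem:one-way}, to the output. Put $A=A_q^++n$ and $s'=s+n$. Since $h_\kappa(s')^{-1}=(8\kappa^2)^{-1}(2\kappa+is')(4\kappa+is')(6\kappa+is')$, we have
\[
 r_n(s)=\theta(s')/h_\kappa(s')=\frac{2\kappa+is'}{8\kappa^2}(4\kappa+is')(6\kappa+is')\theta(s')
 =(2\kappa+is')^{-1}\,\rho(s'),
\]
where $\rho(s')=(8\kappa^2)^{-1}(2\kappa+is')^2(4\kappa+is')(6\kappa+is')\theta(s')$ is a fixed smooth compactly supported symbol. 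Because $\theta$ is compactly supported, $\rho$ does not depend on $n$ except through the translation.

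\textbf{Step 1 (factorization).} Using the product rule for Fourier-measure multipliers on uniformly local data, exactly as in the proof of Proposition~\ref{prop:dual-frame}, we write
\[
 r_n(A_q^+)F_n^+=(2\kappa+iA)^{-1}\bigl(\rho(A)F_n^+\bigr).
\]
This uses the realization $R_{2\kappa,+}$ of Lemma~\ref{lem:uloc-resolvent-factorization} with $B=A_q^++n$, and commutation of multipliers, justified via the Laplace and group representations and cutoff exhaustion.

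\textbf{Step 2 (observation).} Apply Lemma~\ref{lem:one-way} with $a=2\kappa$, $c=n$, and $W$ the bounded self-adjoint shift part of $A_q^+$. This gives
\[
 \|(r_n(A_q^+)F_n^+)_1\|_\infty\le C_{\kappa,\chi}\|\rho(A)F_n^+\|_{U_1}.
\]
The key point is that the constant is independent of $W$ and $n$, so no dependence on $\|q\|_\infty$ enters.

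\textbf{Step 3 (uniformly local bound).} Since $\rho(\cdot+n)$ is a translate of a fixed Schwartz-type symbol, its $\mathcal W_{1/2}$ norm is bounded by $C_\kappa$, uniformly in $n$, by the argument of Lemma~\ref{lem:rational-overlap}. Lemma~\ref{lem:ulocfc} at $R=1$ then gives $\|\rho(A)F_n^+\|_{U_1}\le C_\kappa\|F_n^+\|_{U_{1}}$, up to the fixed change of radius allowed there.

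\textbf{Step 4 (passing to scale $R$).} The windows satisfy $\chi_{1,y}\le C\chi_{R,y}$ for $R\ge1$, because $\chi$ is even and decreasing on $[0,\infty)$, so $\chi((x-y)/R)\ge\chi(x-y)$. Hence $\|F\|_{U_1}\le C\|F\|_{U_R}$, and for the slightly enlarged radius produced in Step 3 one uses Lemma~\ref{lem:windows}. Combined with \eqref{eq:b-feature}, $\|F_n^+\|_{U_R}=\beta_{n,R}$, this yields \eqref{eq:nonlinear-Bernstein}.

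\textbf{Main obstacle.} I expect the main difficulty to be Step 1, the rigorous splitting of the unbounded-symbol factor $(2\kappa+is')$ in front of a bounded compactly supported multiplier on $L^2_{\uloc}$. The resolvent is defined by exhaustion, and $\rho(A)$ is defined through its Fourier measure.

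I would first verify the identity on compactly supported inputs $\eta_LF_n^+$, where it is the global $L^2$ spectral theorem. I would then pass to the limit locally, using \eqref{eq:resolvent-exhaustion} together with the tail bounds of Lemma~\ref{lem:ulocfc} for $\rho$. An alternative avoids the factorization entirely: apply $(2\kappa+iA)$ to both sides, and invoke the uniqueness statement in Lemma~\ref{lem:uloc-resolvent-factorization}(i).
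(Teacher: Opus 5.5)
Your proposal is correct and follows the paper's proof essentially step for step. The paper also factors $r_n(s)=(2\kappa+i(s+n))^{-1}b(s+n)$ with the fixed compactly supported symbol $b(s)=(2\kappa+is)\theta(s)/h_\kappa(s)$, which is your $\rho$. It then bounds $b(A_q^++n)$ on $U_1$ uniformly in $q,n$ via Lemma~\ref{lem:ulocfc}, applies Lemma~\ref{lem:one-way} with $a=2\kappa$, and concludes with $\|F_n^+\|_{U_1}\le\|F_n^+\|_{U_R}=\beta_{n,R}$. Your discussion of the factorization on uniformly local data and of the fixed radius change in the $U_R$ calculus fills in details that the paper only asserts.
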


\begin{proof}
Fix $a=2\kappa$ and put
$b(s)=(a+is)\theta(s)/h_\kappa(s)$.  This is one fixed smooth compactly
supported symbol, and
\[
 r_n(A)=(a+i(A+n))^{-1}b(A+n),\qquad A=A_q^+.
\]
Lemma~\ref{lem:ulocfc} bounds $b(A+n)$ on $U_1$ uniformly in $q,n$.
Apply Lemma~\ref{lem:one-way} and use
$\|F_n^+\|_{U_1}\le\|F_n^+\|_{U_R}=\beta_{n,R}$.
Feature summability then gives absolute convergence of the first
components of the dual frame in $L^\infty$; their sum agrees with the
uniformly local reconstruction.
\end{proof}

\begin{corollary}[Amplitude observability]\label{cor:amplitude}
Let $Q=\norm q_\infty$ and $\cL_R=\sum_n\beta_{n,R}$.  Then
\begin{equation}\label{eq:Q-L}
 Q\le C_{\kappa,\chi}\cL_R.
\end{equation}
Furthermore,
\begin{equation}\label{eq:Q-k}
 Q\le C_{\kappa,\chi}(k_1+k_1^2)
 \le C_{\kappa,\chi}(k_R+k_R^2).
\end{equation}
\end{corollary}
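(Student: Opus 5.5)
The first bound \eqref{eq:Q-L} comes from reading off the first component of the endpoint dual frame. By Proposition~\ref{prop:dual-frame}, $\bar q e_1=\sum_n r_n(A_q^+)F_n^+$ with absolute convergence in $L^2_{\uloc}$. Proposition~\ref{prop:observation} shows that the first components of the terms converge absolutely in $L^\infty$, with sum equal to $\bar q$ almost everywhere. The triangle inequality and \eqref{eq:nonlinear-Bernstein} then give
\[
 Q=\|\bar q\|_\infty\le\sum_n\|(r_n(A_q^+)F_n^+)_1\|_\infty\le C_{\kappa,\chi}\sum_n\beta_{n,R}=C_{\kappa,\chi}\cL_R .
\]
This holds for every $R\ge1$, in particular for $R=1$.

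For \eqref{eq:Q-k} I would split the frequency sum into small and large features. Apply \eqref{eq:Q-L} with $R=1$. By \eqref{eq:exact-summand}, $\beta_{n,1}\lesssim$ the $n$-th summand of $k_1$ whenever $\beta_{n,1}\le1$. Hence the indices with $\beta_{n,1}\le1$ contribute at most $Ck_1$. The indices with $\beta_{n,1}>1$ each contribute at least a fixed constant $c$ to $k_1$, so there are at most $k_1/c$ of them. For these I need an amplitude-type bound on $\beta_{n,1}$ itself, uniform in $n$. The disk bound from the proof of Lemma~\ref{lem:fock-realization} gives
\[
 \sum_m|S_m|^2\le C\sum_a\sum_m|g_{\lambda_a,+}|^{2m}\le C\,Q/\kappa
\]
pointwise. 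Integrating against $\chi_{1,y}^2$ yields $\beta_{n,1}\le C_{\kappa,\chi}\sqrt Q$, since $M_n$ does not change $|p|$.

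Combining the two parts gives
\[
 Q\le C(k_1+k_1\sqrt Q).
\]
By Young, $Ck_1\sqrt Q\le Q/2+C'k_1^2$, so absorbing the $Q/2$ term yields $Q\le C(k_1+k_1^2)$. This absorption requires $Q<\infty$, which holds because $q\in M\subset L^\infty$. The final inequality $k_1\le Ck_R$ uses monotonicity in $R$. Since $\chi$ is decreasing on $[0,\infty)$, we have $\chi_{1,y}\le\chi_{R,y}$ pointwise for $R\ge1$, so $B_{n,1}\le B_{n,R}$. The calibration is increasing, so $k_1\le k_R$, and the polynomial bound transfers.

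The main obstacle is the large-feature count. Each large feature must be controlled by $\sqrt Q$ and not by a quantity growing in $R$, which is why I work at $R=1$ and only afterwards compare with $k_R$. If the $\sqrt Q$ estimate turned out too lossy, I would instead count large features using \eqref{eq:k-beta} and bound each by $\sup_n\beta_{n,1}$, which leads to the same quadratic absorption.
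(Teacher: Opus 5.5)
Your proposal is correct and follows the paper's proof essentially step for step. The steps match: the first component of the dual frame with the nonlinear Bernstein estimate gives $Q\le C\cL_R$; the disk (Schur) bound with the geometric series gives $\sup_n\beta_{n,1}\le C\sqrt Q$; the small/large split gives $\cL_1\le C(1+\sqrt Q)k_1$; and monotonicity of the unnormalized decreasing window gives $k_1\le k_R$. The only cosmetic difference is that you close with Young's inequality and absorption, whereas the paper solves the quadratic inequality in $x=\sqrt Q$ explicitly.
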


\begin{proof}
Take the first component in \eqref{eq:dual-frame-plus}, use
Proposition~\ref{prop:observation}, and sum.  This proves \eqref{eq:Q-L}.
The Schur estimate and the geometric series in
\eqref{eq:positive-density} give
\begin{equation}\label{eq:beta-cap}
 \sup_n\beta_{n,1}^2\le C_{\kappa,\chi}Q.
\end{equation}
Split $\cL_1$ into boxes with $\beta_{n,1}\le1$ and
$\beta_{n,1}>1$.  Equation \eqref{eq:k-beta} and
\eqref{eq:beta-cap} give
\[
 \cL_1\le C(1+\sqrt Q)k_1.
\]
Combine this with \eqref{eq:Q-L} and solve the quadratic inequality in
$x=\sqrt Q$: from $x^2\le Ck_1(1+x)$ one obtains
$x\le C(\sqrt{k_1}+k_1)$ and hence $Q\le C(k_1+k_1^2)$.  This proves the
first estimate in \eqref{eq:Q-k}.  Since the
windows are unnormalized and decreasing, $\chi_{R,y}\ge\chi_{1,y}$ for
$R\ge1$, so $k_1\le k_R$.
\end{proof}

We now prove that the saturated functional controls the complete modulation
norm.  The first-component estimate alone controls only $L^\infty$; the
following argument recovers the outer $\ell^1$ frequency summation.

\begin{lemma}[Uniformly local box calculus]\label{lem:local-box-calculus}
There are fixed constants $C,C_0$ with the following properties.  Let
$c\in\R$ and $j,k\in\Z$.
\begin{enumerate}[label=\textup{(\roman*)}]
\item If $f,F\in L^2_{\uloc}$, $(D+c)f=F$ in distributions, and
$|j+c|\ge4$, then
\begin{equation}\label{eq:off-box}
 \sup_y\norm{\chi_{1,y}\square_jf}_2
 \le\frac C{|j+c|}
 \sup_y\norm{\widetilde\chi_yF}_2.
\end{equation}
For $|j+c|<4$ one has instead the direct bound
$\sup_y\norm{\chi_{1,y}\square_jf}_2\le C\norm f_{U_1}$.
\item If $q\in M$, $f\in L^2_{\uloc}$, and
\[
 \alpha_p=\sup_y\norm{\chi_{1,y}\square_pq}_2,
\]
then
\begin{equation}\label{eq:local-product}
 \norm{\square_k(qf)}_{L^2_{\uloc}}
 \le C\sum_p\alpha_p
 \sum_{|j-(k-p)|\le C_0}
 \norm{\square_jf}_{L^2_{\uloc}}.
\end{equation}
The same estimates hold with any of the finitely many enlarged windows
which arise from the fixed frequency cutoffs.
\end{enumerate}
\end{lemma}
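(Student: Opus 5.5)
Both parts are fixed-scale statements about frequency-localized functions. I would prove them by writing each $\square_j$ as convolution with a Schwartz kernel $K_j=e^{ij\cdot}K_0$ and combining this with the uniformly local bookkeeping of Lemma~\ref{lem:windows}.

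For (i), write $\square_jf=\square_j m_{j,c}(D)(D+c)f$. Here $m_{j,c}(\xi)=\widetilde\psi(\xi-j)/(\xi+c)$, with $\widetilde\psi$ a fixed cutoff equal to one on $\supp\varphi\subset(-2,2)$ and supported in $(-3,3)$. When $|j+c|\ge4$, on the support we have $|\xi+c|\ge|j+c|-3\ge|j+c|/4$. So $m_{j,c}$ and its first two derivatives are $O(|j+c|^{-1})$ on a fixed compact interval. The estimate $\|\check a\|_1\lesssim\|a\|_1+\|a''\|_1$, or better a weighted version $\|\langle x\rangle^N\check a\|_1\lesssim\sum_{r\le N+2}\|a^{(r)}\|_1$ to retain spatial decay, then shows that $\square_jm_{j,c}(D)$ is convolution with a kernel $k$ satisfying $|k(x)|\le C_N|j+c|^{-1}\langle x\rangle^{-N}$.

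Since $(D+c)f=F$ holds in distributions and $f\in L^2_{\uloc}$ has a tempered extension, the identity $\square_jf=k*F$ holds. I would check this by testing against Schwartz functions, using that $\square_j$ has a compactly supported multiplier. Then $\|\chi_{1,y}(k*F)\|_2$ is bounded by decomposing $F$ over unit intervals, exactly as in the proof of \eqref{eq:modulation-uloc}. The weight $\langle x\rangle^{-N}$ sums the interval contributions, giving $\sup_y\|\widetilde\chi_yF\|_2$ (indeed even the $U_1$ norm of $F$). The enlarged window $\widetilde\chi$ is used only so that the interval masses are controlled. For $|j+c|<4$, the kernel $K_j$ alone has the same Schwartz decay and gives $C\|f\|_{U_1}$.

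For (ii), decompose $q=\sum_p\square_pq$ and $f=\sum_j\square_jf$ locally. The latter is understood via the uniformly local calculus, or by first truncating $f$ spatially and passing to the limit using finite overlap. By Fourier support, $\square_k(\square_pq\,\square_jf)=0$ unless $|k-p-j|\le C_0$ with $C_0=6$, say. For the surviving terms I would use $\|\square_k(uv)\|_{L^2_{\uloc}}\le C\|u\|_\infty\|v\|_{L^2_{\uloc}}$, which follows from the kernel bound above. Combined with $\|\square_pq\|_\infty\le C\alpha_p$ (proved in Lemma~\ref{lem:windows}), this yields \eqref{eq:local-product}. Summation in $p$ converges since $q\in M$, so $\sum_p\alpha_p<\infty$ by \eqref{eq:modulation-uloc}.

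The main technical point, which I expect to be the only obstacle, is justifying that the distributional identities and the box decomposition of $f$ converge for merely uniformly local $f$. I would handle this by working with $\eta_Lf$ and passing to local $L^2$ limits, using the rapid kernel decay uniformly in $j,k,p$. The statement about enlarged windows follows since every argument used only rapid decay and positivity on a fixed core of the window.
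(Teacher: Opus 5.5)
Your proposal is correct and takes essentially the paper's route. In (i) the paper also writes $\square_jf$ as the off-resonant multiplier $\varphi(\xi-j)/(\xi+c)$ applied to $F$ and bounds its kernel by $C|j+c|^{-1}$. The only difference there is that it uses just the $L^1$ norm of the kernel with Minkowski's inequality on the sharp norm $\norm{\cdot}_{I_1}$ and Lemma~\ref{lem:windows}, where you use pointwise kernel decay and a unit-interval decomposition. In (ii) it uses, as you do, the local Bernstein bound $\norm{\square_pq}_\infty\le C\alpha_p$, the finite frequency overlap $|k-p-j|\le C_0$, and uniform boundedness of $\square_k$ on the uniformly local norm. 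Your explicit justification of the distributional identity and of the box decomposition for merely uniformly local $f$ supplies detail the paper leaves implicit.
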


\begin{proof}
Put $J_y=[y-1,y+1]$, so that $\norm F_{I_1}
=\sup_y\norm{\one_{J_y}F}_2$.
In the off-resonant case,
\[
 \square_jf
 =\mathcal F^{-1}\left[
 \frac{\varphi(\xi-j)}{\xi+c}\widehat F(\xi)\right].
\]
This follows directly in distributions from
$(\xi+c)\widehat f=\widehat F$: on the support of
$\varphi(\xi-j)$ the factor $\xi+c$ is nonzero, so multiplication by its
smooth reciprocal is legitimate.  A homogeneous resonant distribution is
supported at $\xi=-c$ and is killed by this off-resonant cutoff.  In the
central boxes we do not invert $D+c$ at all; we simply use
$\square_jf=\check\varphi_j*f$ and its uniform $L^1$ kernel bound.
After writing $\xi=j+\eta$, the denominator is $j+c+\eta$.  Repeated
integration by parts shows that the convolution kernel $K_{j,c}$ of this
multiplier satisfies
\[
 \norm{K_{j,c}}_{L^1}
 \le C|j+c|^{-1}
\]
uniformly when $|j+c|\ge4$.  Convolution is bounded on the interval
uniformly local norm without any enlargement: Minkowski's inequality gives
\[
 \norm{\one_{J_y}(K*F)}_2
 \le\int_\R|K(z)|\norm{\one_{J_y-z}F}_2\dd z
 \le\norm K_1\norm F_{I_1}.
\]
Taking the spatial supremum and using Lemma~\ref{lem:windows} to pass
between $I_1$, $U_1$, and the finitely enlarged fixed windows proves
\eqref{eq:off-box}.  In the central boxes the projection kernel itself has
a uniform $L^1$ norm, which gives the direct estimate on $I_1$.

For the product, choose a fixed $\vartheta\in C_c^\infty$ which equals one on
$\supp\varphi$ and put $\widetilde\square_p=\vartheta(D-p)$.  Then
$\square_pq=\widetilde\square_p\square_pq$, and the convolution-kernel
argument used in \eqref{eq:modulation-uloc}, with a fixed enlargement of
$J_y$, gives the precise local Bernstein estimate
\[
 \norm{\square_pq}_\infty
 \le C\sup_y\norm{\one_{[y-C_0,y+C_0]}\square_pq}_2
 \le C\alpha_p.
\]
Now use these fattened projections to write
\[
 \square_k(qf)=
 \sum_{\substack{p,j\in\Z\\|k-p-j|\le C_0}}
 \square_k\bigl((\square_pq)(\square_jf)\bigr).
\]
It follows directly that
\[
 \norm{(\square_pq)(\square_jf)}_{I_1}
 \le C\alpha_p\norm{\square_jf}_{I_1}.
\]
The output projection is convolution with a kernel having a uniform
$L^1$ norm, hence is bounded on $I_1$ by the first calculation in this
proof.  Take the spatial supremum, sum the finitely overlapping frequency
indices, and transfer back to the fixed-window norm by
Lemma~\ref{lem:windows}.  This proves \eqref{eq:local-product}.
\end{proof}

The preceding box calculus upgrades amplitude observability to control of the
outer \(\ell^1\) frequency sum.  Combining the two now yields the coercive
estimate used in the endpoint continuation argument.

\begin{theorem}[Coercivity of the saturated determinant]\label{thm:coercivity}
For every $q\in M$ and $R\ge1$,
\begin{equation}\label{eq:saturated-coercivity}
 \norm q_M
 \le C_{\kappa,\chi}
 k_R(q)(1+k_R(q))^9.
\end{equation}
In particular, $K_R(q)=0$ if and only if $q=0$.
\end{theorem}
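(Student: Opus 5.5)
By Lemma~\ref{lem:windows}, it suffices to bound $\sum_k\sup_y\norm{\chi_{1,y}\square_k q}_2$, and by the monotonicity $k_1\le k_R$ it suffices to treat $R=1$; we write $\beta_n=\beta_{n,1}$ and $Q=\norm q_\infty$. The plan is to read off the box $\square_k q$ from the features near $n=-k$ (or $n=k$, according to the conjugation convention), using the first-grade equation. For $X=F_n^+$ the first component satisfies, by Lemma~\ref{lem:fock-realization} and the resolvent identities in Lemma~\ref{lem:uloc-resolvent-factorization},
\[
 (6\kappa+i(A_q^++n))(4\kappa+i(A_q^++n))(2\kappa+i(A_q^++n))F_n^+=8\kappa^2\bar q e_1 .
\]
Projecting to grade one, $\bar q$ equals $(8\kappa^2)^{-1}$ times a cubic polynomial in $D+n$ applied to $(F_n^+)_1$, plus terms in which the shifts $i\bar q X_0=0$ and $-iqX_2$ appear. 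The key point is that each such correction carries a factor $q$ multiplying a grade-$\ge2$ component, and those components are controlled by $\beta_n$.

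The order of steps is as follows. First, invert the cubic polynomial only on boxes near resonance, and use Lemma~\ref{lem:local-box-calculus}(i) off resonance. Concretely, write $\bar q e_1=\sum_n r_n(A_q^+)F_n^+$ (Proposition~\ref{prop:dual-frame}) and apply $\square_j$ to the first component of each term. For the $n$-th term we expect
\[
 \sup_y\norm{\chi_{1,y}\square_j(r_n(A)F_n^+)_1}_2\lesssim \langle j+n\rangle^{-2}\beta_n + (\text{commutator terms}).
\]
To prove this, write $(D+n)$ acting on the first component as the first row of $A+n$, which equals $(A+n)$ minus the shift term $-iq X_2$. Applying this twice gives two powers of decay, and the error terms contain products $q\cdot(\text{grade-2 component})$. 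Second, treat those products with the local product estimate \eqref{eq:local-product}. This produces convolutions $\alpha*\norm{\square_\cdot(\cdot)}$, where $\alpha_p\lesssim\norm{\square_p q}_\infty$ is again the unknown. Close this as a polynomial inequality: the bound for $Y:=\norm q_M$ takes the form
\[
 Y\lesssim \sum_n\beta_n(1+Q)^{a}+ (\text{small})\cdot Y,
\]
or, after iterating at most a fixed number of times, a purely polynomial bound in $\cL_1$ and $Q$. Here $\langle\cdot\rangle^{-2}$ is summable in $j$, so the main terms sum to $C\cL_1$.

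Third, convert from $\cL_1$ to $k_1$ exactly as in Corollary~\ref{cor:amplitude}: $\cL_1\le C(1+\sqrt Q)k_1$ and $Q\le C(k_1+k_1^2)$. Collecting powers gives $\norm q_M\lesssim k_1(1+k_1)^9$; the particular exponent just records how many factors $(1+Q)^{1/2}$ occur. The final assertion then follows because $K_R=0$ forces $\norm q_M=0$, while conversely $q=0$ gives $g\equiv0$ and hence $E_\kappa=0$.

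The main obstacle is the second step. The grade-two coupling $-iqX_2$ reintroduces $q$ at full amplitude, and a naive bound $\norm{qX_2}\le Q|X|$ loses the outer $\ell^1$ frequency sum. I would try to avoid a Gronwall-type closure in $Y$. Instead, I would apply Lemma~\ref{lem:one-way} to the higher grades as well: for the grade-$m$ components the velocity is $1/m$, and the weighted resolvent bound holds uniformly in $W$. I would then estimate $\square_j(qX_2)$ by putting the full $q$ in $L^\infty$ (bounded by $Q$) and letting the frequency localization fall on $X_2$, whose box decay comes from the same off-resonant inversion at velocity $1/2$. This yields a decay factor $\langle j+n\rangle^{-1}$ times $Q\beta_n$; together with one direct power it is summable in $j$ only after a second iteration, which is the source of the additional polynomial factors in $Q$.
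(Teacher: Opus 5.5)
\textbf{There is a genuine gap: your second step does not close.} Your first step matches the paper. You expand $\bar q e_1=\sum_\ell r_\ell(A_q^+)F_\ell^+$ and invert $D+\ell$ twice off resonance on the first component, which gives the main term $\langle\ell-n\rangle^{-2}\beta_{\ell,1}$. The $m=2$ component equation then gives $(u_\ell^{(r)})_2$ the box decay $(1+Q)\beta_{\ell,1}\langle j+2\ell\rangle^{-1}$.

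The trouble comes after the local product estimate \eqref{eq:local-product} and summation over output boxes. The coupling term is $C(1+Q)\sum_p\alpha_p\mathcal Q_p$, where $\mathcal Q=\mathcal K*\boldsymbol\beta$ and $\mathcal K(d)\lesssim\log(2+|d|)/\langle d\rangle$. This kernel lies in $\ell^2$ but not in $\ell^1$. Write $N=\sum_p\alpha_p\asymp\norm q_M$ for your $Y$. Your ``(small)$\cdot Y$'' step needs the coefficient $C(1+Q)\sup_p\mathcal Q_p$ to be small. That coefficient has size $(1+Q)\cL_1$, which is not small for large data. Iterating a linear inequality whose coefficient is at least one, a fixed number of times, does not help.

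\textbf{Your fallback fails.} You cannot put all of $q$ in $L^\infty$ and let $\square_j$ fall on $X_2$, because multiplication by a non-localized $q$ does not commute with frequency projection. The box $\square_j(qX_2)$ collects all pairs of boxes with $p+j'\approx j$. The only $L^\infty$-based bound is $\norm{\square_j(qX_2)}_{L^2_{\uloc}}\lesssim Q\norm{X_2}_{U_1}$, which has no decay in $j$, so the sum over output boxes diverges. Also, the decay of $X_2$ is centered at $-2\ell$ while the output box is $-n$. Only the frequency $p$ of $q$ links the two, which is why the unknowns $\alpha_p$ must reappear. Lemma~\ref{lem:one-way} does not help here: it gives uniform pointwise bounds, not box decay.

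\textbf{The missing idea.} Use only $\norm{\mathcal Q}_{\ell^2}\le C\cL_1$ together with the amplitude cap $\alpha_p\le CQ$. The paper proceeds in three moves:
\begin{enumerate}
\item It absorbs into the left side the indices where $\mathcal Q_p\le 1/(2C(1+Q))$.
\item It bounds the number of remaining indices by Chebyshev: $|\mathcal B|\le C(1+Q)^2\cL_1^2$.
\item On those indices it uses $\alpha_p\le CQ$ and Cauchy--Schwarz.
\end{enumerate}
A shorter route to the same bound is $\sum_p\alpha_p\mathcal Q_p\le\norm{(\alpha_p)}_{\ell^2}\norm{\mathcal Q}_{\ell^2}$, with $\norm{(\alpha_p)}_{\ell^2}^2\le\norm{(\alpha_p)}_{\ell^\infty}N\le CQN$, followed by Young's inequality. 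Either way one gets $N\le C\bigl[\cL_1+Q(1+Q)^2\cL_1^2\bigr]$, and your third step ($Q,\cL_1\le Ck_1(1+k_1)$) then yields the exponent $9$.
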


\begin{proof}
It is enough to prove the estimate for $R=1$.  Put
\[
 \alpha_p=\sup_y\norm{\chi_{1,y}\square_pq}_2,
 \qquad
 N=\sum_p\alpha_p\asymp_\chi\norm q_M,
 \qquad
 L=\sum_\ell\beta_{\ell,1},
 \qquad Q=\norm q_\infty.
\]
The compact frequency support of each $\square_pq$, together with the
uniform $L^1$ bounds for the projection kernels, gives
\begin{equation}\label{eq:alpha-cap}
 \sup_p\alpha_p\le C_\chi Q.
\end{equation}

Let $g=\bar q e_1$ and define the forward frame pieces
\begin{equation}\label{eq:u-ell}
 u_\ell=\theta(A_q^++\ell)g,
 \qquad
 u_\ell^{(r)}=(A_q^++\ell)^ru_\ell,\quad r=0,1,2.
\end{equation}
Then $g=\sum_\ell u_\ell$ by the scalar frame identity.  Moreover,
\[
 u_\ell=r_\ell(A_q^+)F_\ell^+,\qquad
 u_\ell^{(r)}=[(s+\ell)^r r_\ell(s)](A_q^+)F_\ell^+.
\]
The multipliers in brackets are translates of fixed smooth compactly
supported symbols.  Lemmas~\ref{lem:rational-overlap} and
\ref{lem:ulocfc} give
\begin{equation}\label{eq:adapted-bounds}
\sum_{r=0}^2\norm{u_\ell^{(r)}}_{U_1}
 \le C_{\kappa,\chi}\beta_{\ell,1}.
\end{equation}
The identities defining these powers hold in distributions on uniformly
local data.  Indeed, they hold on spatially truncated $L^2$ sources; the
localized symbols for successive powers converge locally in $L^2$, and
$A_q^+$ maps $L^2_{\mathrm{loc}}$ continuously into distributions.
Passing to the limit one factor at a time justifies the component equations
below without asserting membership in a global $L^2$ operator domain.

If $X=u_\ell^{(r)}$ and $Y=u_\ell^{(r+1)}$, the $m$th component of
$Y=(A_q^++\ell)X$ is
\begin{equation}\label{eq:component-equation}
 \left(\frac Dm+\ell\right)X_m
 =Y_m-i\bar qX_{m-1}+iqX_{m+1}.
\end{equation}
We shall use the two estimates in Lemma~\ref{lem:local-box-calculus}.

Apply \eqref{eq:component-equation} with $m=2$, first for $r=0$ and then
for $r=1$.  After multiplication by two, the equation is
\[
 (D+2\ell)(u_\ell^{(r)})_2
 =2(u_\ell^{(r+1)})_2
  -2i\bar q(u_\ell^{(r)})_1
  +2iq(u_\ell^{(r)})_3.
\]
Its right side has $U_1$ norm at most
$C(1+Q)\beta_{\ell,1}$ by \eqref{eq:adapted-bounds}.  Apply
\eqref{eq:off-box}; in the central boxes use the direct part of
Lemma~\ref{lem:local-box-calculus}.  We obtain
\begin{equation}\label{eq:second-component}
 \sup_y\norm{\chi_{1,y}\square_j(u_\ell^{(r)})_2}_2
 \le C_{\kappa,\chi}
 \frac{(1+Q)\beta_{\ell,1}}
      {\langle j+2\ell\rangle},
 \qquad r=0,1.
\end{equation}
Here the finitely many boxes with $j=-2\ell+O(1)$ are bounded directly by
\eqref{eq:adapted-bounds}.  On those boxes
$\langle j+2\ell\rangle\le C$, so the direct bound
$C\beta_{\ell,1}$ is at most the right side of
\eqref{eq:second-component}.  Off the central set,
$|j+2\ell|^{-1}\lesssim\langle j+2\ell\rangle^{-1}$.
This combines the two cases into the single Japanese-bracket estimate.

Next apply \eqref{eq:component-equation} twice with $m=1$.  The first
component satisfies the two exact equations
\begin{align*}
 (D+\ell)(u_\ell)_1
 &=(u_\ell^{(1)})_1+iq(u_\ell)_2,\\
 (D+\ell)(u_\ell^{(1)})_1
 &=(u_\ell^{(2)})_1+iq(u_\ell^{(1)})_2.
\end{align*}
Fix an output box $-n$.  When $|\ell-n|$ is larger than the fixed overlap
constant, $(D+\ell)^{-1}\square_{-n}$ denotes the bounded multiplier
$\varphi(\xi+n)/(\xi+\ell)$, and similarly for the second power.  Applying
$\square_{-n}$ to the preceding two equations gives
\begin{align*}
 \square_{-n}(u_\ell)_1
={}&(D+\ell)^{-2}\square_{-n}(u_\ell^{(2)})_1
 +i(D+\ell)^{-2}\square_{-n}\bigl(q(u_\ell^{(1)})_2\bigr)\\
 &+i(D+\ell)^{-1}\square_{-n}\bigl(q(u_\ell)_2\bigr).
\end{align*}
The same bounds follow from \eqref{eq:off-box}.  In the finitely many
boxes with $\ell-n=O(1)$, use \eqref{eq:adapted-bounds}; the first term on
the right below then dominates the direct estimate.  This yields
\begin{align}
 &\sup_y\norm{\chi_{1,y}\square_{-n}(u_\ell)_1}_2
 \notag\\
 &\quad\le C\frac{\beta_{\ell,1}}{\langle\ell-n\rangle^2}
 +\frac C{\langle\ell-n\rangle}
  \norm{\square_{-n}(q(u_\ell)_2)}_{L^2_{\uloc}}
 \notag\\
 &\qquad
 +\frac C{\langle\ell-n\rangle^2}
  \norm{\square_{-n}(q(u_\ell^{(1)})_2)}_{L^2_{\uloc}}.
 \label{eq:first-component-two-inversions}
\end{align}
To estimate the two products, \eqref{eq:local-product} restricts the
$p$th box of $q$ to boxes
\[
 j=-n-p+O(1)
\]
of $(u_\ell^{(r)})_2$.  At those boxes the denominator in
\eqref{eq:second-component} is
$\langle j+2\ell\rangle\asymp
 \langle2\ell-n-p\rangle$, up to a fixed finite convolution.  Thus, for
$r=0,1$,
\begin{equation}\label{eq:product-charge}
 \norm{\square_{-n}(q(u_\ell^{(r)})_2)}_{L^2_{\uloc}}
 \le C(1+Q)\beta_{\ell,1}
 \sum_p\frac{\alpha_p}{\langle2\ell-n-p\rangle}.
\end{equation}
The frequency calculation behind the index relation is
\[
 \underbrace{p}_{\square_pq}
 +\underbrace{j}_{\square_j(u_\ell^{(r)})_2}
 =\underbrace{-n}_{\text{output center}}+O(1),
\]
so $j=-n-p+O(1)$ and
$j+2\ell=2\ell-n-p+O(1)$.  The $O(1)$ constants are only the fixed widths
of the three cutoff supports and are absorbed by the Japanese brackets.
Since $\bar q=\sum_\ell(u_\ell)_1$ and the cutoff is real and even,
$\square_{-n}\bar q=\overline{\square_nq}$.  Summing
\eqref{eq:first-component-two-inversions} gives
\begin{equation}\label{eq:almost-coercive}
 \alpha_n\le C\sum_\ell
 \frac{\beta_{\ell,1}}{\langle\ell-n\rangle^2}
 +C(1+Q)\sum_{\ell,p}
 \frac{\beta_{\ell,1}\alpha_p}
 {\langle\ell-n\rangle\langle2\ell-n-p\rangle}.
\end{equation}

To sum the second kernel, put
\begin{equation}\label{eq:harmonic-kernel}
 \mathcal K(d)=\sum_{r\in\Z}
 \frac1{\langle r\rangle\langle r-d\rangle}.
\end{equation}
Splitting into $|r|\le|d|/2$, $|r-d|\le|d|/2$, and the complement gives
\begin{equation}\label{eq:harmonic-bound}
 \mathcal K(d)\le C\frac{\log(2+|d|)}{\langle d\rangle},
 \qquad \mathcal K\in\ell^2(\Z).
\end{equation}
Let $\boldsymbol\beta=(\beta_{\ell,1})_{\ell\in\Z}$ and put
$\mathcal Q=\mathcal K*\boldsymbol\beta$.  Summation of
\eqref{eq:almost-coercive} yields
\begin{equation}\label{eq:good-bad-start}
 N\le CL+C(1+Q)\sum_p\alpha_p\mathcal Q_p,
 \qquad \norm{\mathcal Q}_{\ell^2}\le CL.
\end{equation}
Indeed, for fixed $\ell,p$ set $r=\ell-n$.  Then
$2\ell-n-p=r+\ell-p$, and hence
\[
 \sum_n\frac1{\langle\ell-n\rangle
                   \langle2\ell-n-p\rangle}
 =\sum_r\frac1{\langle r\rangle\langle r+\ell-p\rangle}
 =\mathcal K(\ell-p).
\]
Therefore the full triple sum is
$\sum_p\alpha_p(\mathcal K*\boldsymbol\beta)_p$; this is where the factor
two in the original denominator disappears after the change of variables.

Choose
\[
 \eta=\frac1{2C(1+Q)},\qquad
 \cG=\{p:\mathcal Q_p\le\eta\},\qquad
 \mathcal B=\Z\setminus\cG.
\]
The contribution of $\cG$ is absorbed into the left side of
\eqref{eq:good-bad-start}, since
$C(1+Q)\sum_{p\in\cG}\alpha_p\mathcal Q_p\le N/2$.
Chebyshev gives
\begin{equation}\label{eq:bad-count}
 |\mathcal B|\le C(1+Q)^2L^2.
\end{equation}
By \eqref{eq:alpha-cap}, Cauchy--Schwarz, and
\eqref{eq:good-bad-start},
\[
 \sum_{p\in\mathcal B}\alpha_p\mathcal Q_p
 \le CQ\sum_{p\in\mathcal B}\mathcal Q_p
 \le C Q|\mathcal B|^{1/2}\norm{\mathcal Q}_{\ell^2}
 \le C Q(1+Q)L^2.
\]
We have proved the exact-frame box estimate
\begin{equation}\label{eq:box-coercivity}
 N\le C\bigl[L+Q(1+Q)^2L^2\bigr].
\end{equation}

It remains to express the right side through $k=k_1$.  The proof of
Corollary~\ref{cor:amplitude} gives
\[
 L\le C(1+\sqrt Q)k,
 \qquad Q\le C L.
\]
Young's inequality applied to
$Q\le C(1+\sqrt Q)k$ gives
\begin{equation}\label{eq:Q-L-by-k}
 Q\le Ck(1+k),\qquad L\le Ck(1+k).
\end{equation}
Insert these bounds in \eqref{eq:box-coercivity}.  Since
$1+Q\le C(1+k)^2$,
\[
 N\le C\bigl[k(1+k)+k^3(1+k)^7\bigr]
 \le Ck(1+k)^9.
\]
The last exponent is deliberately rounded up for a simple uniform
polynomial.  No later step uses the number $9$ itself; the argument only
needs some fixed polynomial dependence on $k$.
Use \eqref{eq:modulation-uloc}.  Finally $k_1\le k_R$, which proves
\eqref{eq:saturated-coercivity}.  If $k_R=0$, all positive features vanish;
the first component of the dual frame gives $q=0$.
\end{proof}

\section{Large-window spectral estimates}\label{sec:large-window-spectral}

The raw current contains unsaturated positive and companion features, whereas
$k_R$ records only their saturated sum.  Spreading prevents isolated feature
spikes, and a companion convolution reduces all remaining current terms to
summable expressions in $k_R$. These estimates are uniform in the window
radius; all constants below are independent of $q$ and $R\ge1$.

The first lemma shows that one very large $\beta_n$ cannot remain invisible
to the saturated sum: nearby frame centers must also carry mass.

\begin{lemma}[Spreading of adapted features]\label{lem:spreading}
For all $m,n\in\Z$,
\begin{equation}\label{eq:reverse-spreading}
 \beta_{n,R}\le C_{\kappa,\chi}
 \langle m-n\rangle^3\beta_{m,R}.
\end{equation}
If $k=k_R(q)$, then
\begin{equation}\label{eq:beta-spreading-consequences}
 \sup_n\beta_{n,R}\le C(1+k)^3,
 \qquad
 \cL_R=\sum_n\beta_{n,R}\le Ck(1+k)^3.
\end{equation}
\end{lemma}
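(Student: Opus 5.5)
The natural route to \eqref{eq:reverse-spreading} is the dual frame of Proposition~\ref{prop:dual-frame}, transported between spectral shifts. Put $A=A_q^+$ and $g=\bar q e_1$. By Lemma~\ref{lem:uloc-resolvent-factorization} and the product rule for Fourier-measure multipliers on uniformly local data (used in the proof of Proposition~\ref{prop:dual-frame}), we can write
\[
 F_n^+=h_\kappa(A+n)g=\sum_\ell h_\kappa(A+n)r_\ell(A)F_\ell^+ .
\]
The individual multipliers here have the form $[h_\kappa(\cdot+n)\theta(\cdot+\ell)/h_\kappa(\cdot+\ell)](A)$. By the argument of Lemma~\ref{lem:rational-overlap}, their $\mathcal W_{1/2}$ norms are $\le C_\kappa\langle n-\ell\rangle^{-3}$, and Lemma~\ref{lem:ulocfc} in its $U_R$ form would then give $\beta_{n,R}\le C\sum_\ell\langle n-\ell\rangle^{-3}\beta_{\ell,R}$. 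That is a forward spreading estimate, but it is not what is asked: the claim bounds $\beta_n$ by a single $\beta_m$.

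The plan is therefore instead to write $F_n^+$ as a bounded multiplier applied to $F_m^+$ alone. Since $h_\kappa$ has no real zeros,
\[
 F_n^+=\frac{h_\kappa(A+n)}{h_\kappa(A+m)}\,F_m^+,\qquad
 \frac{h_\kappa(s+n)}{h_\kappa(s+m)}
 =\prod_{a=1}^3\frac{2a\kappa+i(s+m)}{2a\kappa+i(s+n)}
 =\prod_{a=1}^3\Bigl(1+\frac{i(m-n)}{2a\kappa+i(s+n)}\Bigr).
\]
Expanding the product, each term is $(m-n)^k$ times a product of at most three resolvents $(2a\kappa+i(A+n))^{-1}$, for $k=0,\dots,3$. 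Each such resolvent is a one-sided exponential Laplace transform with every polynomial moment finite, uniformly in $n$. Lemma~\ref{lem:ulocfc}, applied with $U_R$ windows, bounds each term by $C_\kappa|m-n|^k\|F_m^+\|_{U_R}$. This gives $\beta_{n,R}\le C\langle m-n\rangle^3\beta_{m,R}$.

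The identity itself has to be justified on $L^2_{\uloc}$. Lemma~\ref{lem:uloc-resolvent-factorization} shows that $h_\kappa(A+m)=8\kappa^2\prod_a R_{2a\kappa}$, with all resolvents commuting. Composing with the corresponding inverse factors via \eqref{eq:uloc-resolvent-inverse} gives
\[
 \prod_a(2a\kappa+i(A+m))\,F_m^+=8\kappa^2 g
\]
in distributions. Applying $8^{-1}\kappa^{-2}h_\kappa(A+n)$, i.e. the resolvents at shift $n$, and expanding $A+m=(A+n)+(m-n)$ reproduces the expansion above. This uses the uniqueness statement of Lemma~\ref{lem:uloc-resolvent-factorization}(i) one factor at a time. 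I expect this bookkeeping, namely avoiding any claim that $F_m^+$ lies in a global operator domain, to be the main technical point; the estimate itself is mechanical. A further concern is that the constant in the $U_R$ version of Lemma~\ref{lem:ulocfc} must not depend on $R$. That holds because $(1+|t|/R)^{1/2}\le(1+|t|)^{1/2}$ and the measures have finite moments.

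For the consequences \eqref{eq:beta-spreading-consequences}, set $B=\sup_n\beta_{n,R}$, which is finite by Proposition~\ref{prop:feature-summability}, and pick $n$ with $\beta_{n,R}\ge B/2$. Reverse spreading gives $\beta_{m,R}\ge cB\langle m-n\rangle^{-3}$ for every $m$. Then by \eqref{eq:k-beta},
\[
 k\gtrsim\sum_m\min\bigl(cB\langle m-n\rangle^{-3},1\bigr)\gtrsim \min(B,B^{1/3}).
\]
If $B\ge1$ this yields $B\le Ck^3$, and otherwise $B\le1$; either way $B\le C(1+k)^3$. Finally, the terms with $\beta_{m,R}\le1$ contribute at most $Ck$ to $\cL_R$. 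The remaining terms, those with $\beta_{m,R}>1$, are each at most $B$, and by \eqref{eq:k-beta} there are at most $Ck$ of them. Hence $\cL_R\le Ck+CkB\le Ck(1+k)^3$.
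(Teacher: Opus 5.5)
Your proposal is correct and follows the paper's proof. The paper likewise writes $F_n^+=R_{nm}(A_q^+)F_m^+$ with $R_{nm}(s)=h_\kappa(s+n)/h_\kappa(s+m)=\prod_{a\in\{2\kappa,4\kappa,6\kappa\}}\bigl(1+\frac{i(m-n)}{a+i(s+n)}\bigr)$, bounds the moment of its Fourier measure by $C_\kappa\langle m-n\rangle^3$ (via partial fractions into simple resolvents rather than your expansion into resolvent products, which works equally well), applies Lemma~\ref{lem:ulocfc}, and derives \eqref{eq:beta-spreading-consequences} from \eqref{eq:k-beta} by the same counting of indices around a near-maximal feature; your justification of the identity on $L^2_{\uloc}$ is extra bookkeeping the paper leaves implicit, and it also follows directly from the multiplier product rule used in Proposition~\ref{prop:dual-frame}.
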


\begin{proof}
For the forward feature,
\[
 F_n^+=R_{nm}(A_q^+)F_m^+,
 \qquad
 R_{nm}(s)=\frac{h_\kappa(s+n)}{h_\kappa(s+m)}.
\]
If $r=m-n$, then
\begin{equation}\label{eq:ratio-symbol}
 R_{nm}(s)=
 \prod_{a\in\{2\kappa,4\kappa,6\kappa\}}
 \left(1+\frac{ir}{a+i(s+n)}\right).
\end{equation}
Put $\mathcal A=\{2\kappa,4\kappa,6\kappa\}$.  Since the three
poles are distinct, partial fractions give the explicit formula
\[
 R_{nm}(s)=1+\sum_{a\in\mathcal A}\frac{C_a(r)}{a+i(s+n)},
 \qquad
 C_a(r)=ir\prod_{\substack{b\in\mathcal A\\b\ne a}}
                  \left(1+\frac{ir}{b-a}\right).
\]
Consequently, in the convention of Lemma~\ref{lem:ulocfc},
\[
 R_{nm}(s)=\int_\R e^{-its}\dd\mu_{nm}(t),\qquad
 \dd\mu_{nm}(t)=\dd\delta_0(t)
  +\one_{(0,\infty)}(t)e^{-int}
       \sum_{a\in\mathcal A}C_a(r)e^{-at}\dd t.
\]
Equivalently, the Fourier transform with our standard convention is
\[
 \widehat R_{nm}(t)=2\pi\delta_0(t)
  +2\pi\one_{(-\infty,0)}(t)e^{int}
       \sum_{a\in\mathcal A}C_a(r)e^{at}.
\]
Here the coefficients are polynomials in $r$ of degree at most three;
no polynomial factors in $t$ are needed because the poles are simple.
Since $|C_a(r)|\le C_\kappa\langle r\rangle^3$, we obtain
\begin{equation}\label{eq:ratio-moment}
 \int_\R(1+|t|)\dd|\mu_{nm}|(t)
 \le 1+\sum_{a\in\mathcal A}|C_a(r)|
                  \left(\frac1a+\frac1{a^2}\right)
 \le C_\kappa\langle r\rangle^3.
\end{equation}
Apply Lemma~\ref{lem:ulocfc}.  This proves \eqref{eq:reverse-spreading}.

 Let $M_0=\sup_n\beta_{n,R}$.  It is finite by
Proposition~\ref{prop:feature-summability}.  If $M_0>1$, choose $n_0$ with
$\beta_{n_0,R}\ge M_0/2$.  Reversing \eqref{eq:reverse-spreading} gives
\[
 \beta_{m,R}\ge cM_0\langle m-n_0\rangle^{-3}.
\]
Thus $\beta_{m,R}\ge1$ whenever
$\langle m-n_0\rangle\le c_\kappa M_0^{1/3}$, which supplies at least
$c_\kappa M_0^{1/3}$ integer indices (after harmless adjustment when this
number is bounded).  Equation
\eqref{eq:k-beta} gives $M_0\le C(1+k)^3$.  The features below one
sum to $O(k)$.  There are $O(k)$ remaining indices and each is at most
$M_0$.  This proves \eqref{eq:beta-spreading-consequences}.
\end{proof}

The current also contains the companion norm $\gamma_{n,R}$.  The next lemma
expresses it as a summable convolution of the positive features, so no new
control quantity is needed.

\begin{lemma}[Companion convolution]\label{lem:companion-convolution}
For every $n$,
\begin{equation}\label{eq:companion-convolution}
 \gamma_{n,R}\le C_{\kappa,\chi}
 \sum_{\ell\in\Z}\langle n-\ell\rangle^{-2}\beta_{\ell,R}.
\end{equation}
\end{lemma}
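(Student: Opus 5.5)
We apply the endpoint dual frame of Proposition~\ref{prop:dual-frame} to the source $\bar q e_1$ inside the definition of the companion feature. By \eqref{eq:features-plus} and \eqref{eq:dual-frame-plus},
\[
 G_n^+=t_\kappa(A_q^++n)(\bar q e_1)
 =\sum_\ell t_\kappa(A_q^++n)\,r_\ell(A_q^+)F_\ell^+
 =\sum_\ell\bigl[t_\kappa(\cdot+n)r_\ell\bigr](A_q^+)F_\ell^+.
\]
To make this rigorous, we first write the identity for the finite partial sums $\Phi_N(A_q^+)(\bar q e_1)$. For these, composing Fourier-measure multipliers on uniformly local data agrees with multiplying their symbols. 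This is the Fubini/finite-propagation argument already used in the proof of Proposition~\ref{prop:dual-frame}. The same argument also lets us merge $t_\kappa(A+n)$ with $r_\ell(A)$. Note that $t_\kappa$ is a finite combination of the three resolvents, so $t_\kappa(A+n)$ is a bounded operator on $L^2_{\uloc}$ by Lemma~\ref{lem:uloc-resolvent-factorization}.

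We then pass to the limit $N\to\infty$ in $U_1$. On the left, we use the cutoff-exhaustion continuity of $t_\kappa(A+n)$ on $L^2_{\uloc}$ together with the absolute convergence of the frame in $L^2_{\uloc}$. On the right, absolute convergence will follow from the bound proved next, combined with feature summability (Proposition~\ref{prop:feature-summability}).

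For each term, the product symbol $t_\kappa(\cdot+n)r_\ell$ has $\mathcal W_1$ norm at most $C_\kappa\langle n-\ell\rangle^{-2}$ by \eqref{eq:companion-wiener}. Lemma~\ref{lem:ulocfc}, in its $U_R$ version, then gives
\[
 \bigl\|[t_\kappa(\cdot+n)r_\ell](A_q^+)F_\ell^+\bigr\|_{U_R}
 \le C\int\Bigl(1+\tfrac{|t|}{R}\Bigr)^{1/2}\dd|\mu|(t)\,\|F_\ell^+\|_{U_{R'}}
 \le C_\kappa\langle n-\ell\rangle^{-2}\|F_\ell^+\|_{U_{R'}}.
\]
Here we used $R\ge1$, so the weight $(1+|t|/R)^{1/2}$ is at most $(1+|t|)$ and the constant is uniform in $R$.

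The main obstacle is the window radius: Lemma~\ref{lem:ulocfc} changes input and output radii by a fixed factor. We handle this as follows.
\begin{enumerate}[label=\textup{(\alph*)}]
\item Work with the sharp norms $I_R$, where \eqref{eq:uloccalculus} holds with the same radius on both sides.
\item Use \eqref{eq:windowcomparison}: $\|F\|_{U_R}\le C\|F\|_{I_{2R}}$ and $\|F\|_{I_{2R}}\le c^{-1}\|F\|_{U_{2R}}$.
\item Show $\|F\|_{U_{2R}}\le C\|F\|_{U_R}$ by covering an interval of radius $2R$ with boundedly many intervals of radius $R$, each controlled by $U_R$ through the lower bound $\chi\ge c_0$ on $[-1,1]$.
\end{enumerate}
These steps give $\|\cdot\|_{U_{R'}}\asymp\|\cdot\|_{U_R}$ with constants depending only on $\chi$. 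Consequently $\|F_\ell^+\|_{U_{R'}}\le C_\chi\beta_{\ell,R}$ by \eqref{eq:b-feature}.

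Finally, the triangle inequality in $U_R$, applied to the convergent series, gives \eqref{eq:companion-convolution}:
\[
 \gamma_{n,R}=\|G_n^+\|_{U_R}\le C_{\kappa,\chi}\sum_\ell\langle n-\ell\rangle^{-2}\beta_{\ell,R}.
\]
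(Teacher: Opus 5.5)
Your proof is correct and follows essentially the same route as the paper. You insert the endpoint dual frame into $G_n^+=t_\kappa(A_q^++n)\bar q e_1$, bound each multiplier $t_\kappa(\cdot+n)r_\ell$ in $\mathcal W_1$ by \eqref{eq:companion-wiener}, and sum using Lemma~\ref{lem:ulocfc} (with the $R$-uniform weight $(1+|t|/R)^{1/2}\le(1+|t|)^{1/2}$) together with feature summability. Your extra bookkeeping for the change of window radius in the $U_R$ version of Lemma~\ref{lem:ulocfc} spells out a comparison the paper leaves implicit.
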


\begin{proof}
Insert the dual frame into $G_n^+=t_\kappa(A_q^++n)\bar q e_1$.
The multiplier carrying $F_\ell^+$ to the $n$th companion is
\begin{equation}\label{eq:companion-symbol}
 a_{n\ell}(s)=t_\kappa(s+n)r_\ell(s).
\end{equation}
By \eqref{eq:companion-wiener},
$\|a_{n\ell}\|_{\mathcal W_1}\le C_\kappa\langle n-\ell\rangle^{-2}$.
Lemma~\ref{lem:ulocfc} and absolute feature summability therefore give
\[
 \|G_n^+\|_{U_R}
 \le C_\kappa\sum_\ell\langle n-\ell\rangle^{-2}\|F_\ell^+\|_{U_R}.
\]
The constant is uniform in $R\ge1$ because
$(1+|t|/R)^{1/2}\le(1+|t|)^{1/2}$ in the uniformly local calculus.
\end{proof}

The saturation compensates for the loss of one detuning power in the
companion feature.

\begin{lemma}[Saturated sequence sums]\label{lem:saturated-sums}
Let $k=k_R(q)$.  Then
\begin{align}
 I_{b,R}&:=\sum_n\frac{\beta_{n,R}}
 {(1+\beta_{n,R}^2)^{3/2}}
 \le Ck,\label{eq:Ib}\\
 I_{c,R}&:=\sum_n\frac{\gamma_{n,R}}
 {(1+\beta_{n,R}^2)^{3/2}}
 \le C(k+k^3).\label{eq:Ic}
\end{align}
\end{lemma}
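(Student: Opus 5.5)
The first bound \eqref{eq:Ib} is elementary and uses only the exact formula \eqref{eq:exact-summand}. Pointwise,
\[
 \frac{\beta}{(1+\beta^2)^{3/2}}\le\frac{\beta}{(1+\beta^2)^{1/2}}
 \le C\,\frac{\sqrt{30}\,\beta}{\sqrt{1+30\beta^2}}
\]
for every $\beta\ge0$, since $1+30\beta^2\asymp1+\beta^2$. Summing in $n$ gives $I_{b,R}\le Ck$ directly.

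For \eqref{eq:Ic} I would insert the companion convolution of Lemma~\ref{lem:companion-convolution}. This gives
\[
 I_{c,R}\le C\sum_{n,\ell}\frac{\langle n-\ell\rangle^{-2}\beta_{\ell,R}}{(1+\beta_{n,R}^2)^{3/2}}.
\]
The danger is a large $\beta_{\ell,R}$ paired with small $\beta_{n,R}$ at a nearby index $n$. The reverse spreading bound \eqref{eq:reverse-spreading} rules this out: $\beta_{\ell,R}\le C\langle n-\ell\rangle^3\beta_{n,R}$. I then interpolate between the two available bounds for $\beta_{\ell,R}$: take the raw bound $\beta_{\ell,R}$ when $\beta_{\ell,R}\le1$, and the spread bound when $\beta_{\ell,R}>1$. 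Concretely, split the sum by whether $\beta_{\ell,R}\le1$ or not.

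\emph{Case $\beta_{\ell,R}\le1$.} Use $(1+\beta_n^2)^{-3/2}\le1$. Summing $\langle n-\ell\rangle^{-2}$ over $n$ leaves $C\sum_{\ell:\beta_\ell\le1}\beta_{\ell,R}\le Ck$ by \eqref{eq:k-beta}.

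\emph{Case $\beta_{\ell,R}>1$.} By \eqref{eq:k-beta} there are at most $Ck$ such indices $\ell$, and by \eqref{eq:beta-spreading-consequences} each satisfies $\beta_{\ell,R}\le C(1+k)^3$. The naive bound, summing $\langle n-\ell\rangle^{-2}$ over $n$, gives only $Ck(1+k)^3$, which is too weak for the target $k+k^3$. The decay factor in $n$ must therefore be used more carefully. For fixed such $\ell$, spreading gives
\[
 \beta_{n,R}\ge c\,\langle n-\ell\rangle^{-3}\beta_{\ell,R}.
\]
Hence
\[
 \frac{\beta_{\ell,R}}{(1+\beta_{n,R}^2)^{3/2}}\le\frac{\beta_{\ell,R}}{\bigl(1+c\,\beta_{\ell,R}^2\langle n-\ell\rangle^{-6}\bigr)^{3/2}}.
\]
Write $B=\beta_{\ell,R}$ and $d=\langle n-\ell\rangle$. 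For $d\le B^{1/3}$ the term is $\lesssim d^{9}B^{-2}\le d^9/B^2$, so the sum $\sum_{d\le B^{1/3}}d^{-2}\cdot d^9B^{-2}\lesssim B^{8/3}B^{-2}=B^{2/3}$. For $d>B^{1/3}$ the term is $\le B$, and summing $d^{-2}$ gives $\lesssim B\cdot B^{-1/3}=B^{2/3}$.

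So each large $\ell$ contributes $O(\beta_{\ell,R}^{2/3})\le C(1+k)^2$. Multiplying by the $O(k)$ large indices gives $Ck(1+k)^2\le C(k+k^3)$, as required.

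The main obstacle is exactly this last balance: the crude count with $\sup\beta\le C(1+k)^3$ loses too much, and the exponent $3/2$ in the denominator together with the cubic spreading must be matched so that one large feature contributes only $O(\beta^{2/3})$. If the exponent bookkeeping above turns out to need adjustment, the fallback is to use the sharper bound on $\sum_{\beta_\ell>1}\beta_{\ell,R}$ derived in the proof of Lemma~\ref{lem:spreading}, namely at most $Ck$ indices each at most $C(1+k)^3$. That fallback gives only $k(1+k)^3$, though, so I would first try to make the $B^{2/3}$ computation work.
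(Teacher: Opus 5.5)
Your proposal is correct and follows the paper's proof essentially step for step: the companion convolution, the interchange of the sums, reverse spreading in the form $\beta_{n,R}\ge c\langle n-\ell\rangle^{-3}\beta_{\ell,R}$, and the split at $\langle n-\ell\rangle\simeq\beta_{\ell,R}^{1/3}$ giving $O(\beta_{\ell,R}^{2/3})$ per large index, followed by the $O(k)$ count of large indices and $\beta_{\ell,R}^{2/3}\le C(1+k)^2$. Your exponent bookkeeping checks out, so the fallback you mention is unnecessary.
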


\begin{proof}
The first estimate follows from
$b(1+b^2)^{-3/2}\le\min(b,1)$ and \eqref{eq:k-beta}.
For the second one, insert \eqref{eq:companion-convolution}, interchange the
nonnegative sums, and fix $\ell$.  With
$M=\beta_{\ell,R}$ and $d=\langle n-\ell\rangle$, reverse spreading gives
$\beta_{n,R}\ge cd^{-3}M$.  Hence
\begin{align}
 \sum_n\frac{\langle n-\ell\rangle^{-2}}
 {(1+\beta_{n,R}^2)^{3/2}}
 &\le C\sum_{d\ge1}d^{-2}\min(1,d^9M^{-3})\notag\\
 &\le C\begin{cases}1,&M\le1,\\M^{-1/3},&M>1.\end{cases}
 \label{eq:companion-inner-sum}
\end{align}
The split is at $d\simeq M^{1/3}$.  Multiplication by the outer factor
$M$ gives
\[
 I_{c,R}\le C\sum_\ell\min(\beta_{\ell,R},
                              \beta_{\ell,R}^{2/3}).
\]
The small terms sum to $O(k)$.  There are $O(k)$ large terms, and
indeed each index with $\beta_{\ell,R}>1$ contributes a fixed positive
constant to the saturated sum \eqref{eq:exact-summand}.  Hence their
cardinality is at most $Ck$.  Lemma~\ref{lem:spreading} bounds each
$2/3$ power by $C(1+k)^2$.
This proves \eqref{eq:Ic}.
\end{proof}

\section{Active shear and the large-window estimate}\label{sec:active-flux}

This section differentiates $K_R$ despite its nonattained spatial suprema and
infinite frequency sum.  The raw derivative-free current estimate and the
spectral estimates of the previous section yield an $R^{-1}$ differential
inequality, which is then integrated for the endpoint bootstrap.

The spatial supremum in \eqref{eq:KR} need not be attained.  We use the
following form of Danskin's theorem.

\begin{lemma}[Noncompact Danskin theorem]\label{lem:danskin}
Let $B:[t_0-\delta,t_0+\delta]\times\R\to\R$ be bounded and continuous,
continuously differentiable in $t$, and suppose
$\sup_y|\partial_tB(t_0,y)|<\infty$ and
\begin{equation}\label{eq:uniform-time-derivative}
 \sup_y|\partial_tB(t,y)-\partial_tB(t_0,y)|\longrightarrow0
 \quad(t\to t_0).
\end{equation}
Set $M(t)=\sup_yB(t,y)$ and
\[
 \cM^\varepsilon(t_0)
 =\{y:B(t_0,y)\ge M(t_0)-\varepsilon\}.
\]
Then
\begin{equation}\label{eq:noncompact-danskin}
 D_t^+M(t_0)
 \le\limsup_{\varepsilon\downarrow0}
 \sup_{y\in\cM^\varepsilon(t_0)}\partial_tB(t_0,y).
\end{equation}
\end{lemma}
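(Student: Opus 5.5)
The argument is the standard one for Danskin-type bounds, made quantitative so that compactness of the $y$-set is never needed. Fix a sequence $h_k\downarrow0$ realizing the $\limsup$ in the definition of $D_t^+M(t_0)$, and write $\omega(h)=\sup_{|s-t_0|\le h}\sup_y|\partial_tB(s,y)-\partial_tB(t_0,y)|$. By \eqref{eq:uniform-time-derivative}, $\omega(h)\to0$ as $h\downarrow0$. Since $B$ is bounded, $M$ is finite. If the right side of \eqref{eq:noncompact-danskin} equals $+\infty$ there is nothing to prove. In any case it is at most $\Lambda:=\sup_y|\partial_tB(t_0,y)|<\infty$.

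The first step is to pick, for each $k$, a point $y_k$ with $B(t_0+h_k,y_k)\ge M(t_0+h_k)-h_k^2$. The fundamental theorem of calculus in $t$ (using $C^1$ in $t$) gives
\[
 B(t_0+h_k,y_k)=B(t_0,y_k)+h_k\partial_tB(t_0,y_k)+\int_0^{h_k}\bigl(\partial_tB(t_0+s,y_k)-\partial_tB(t_0,y_k)\bigr)\dd s,
\]
and the integral is bounded by $h_k\omega(h_k)$. Since $B(t_0,y_k)\le M(t_0)$, we obtain
\[
 \frac{M(t_0+h_k)-M(t_0)}{h_k}\le\partial_tB(t_0,y_k)+\omega(h_k)+h_k
 -\frac{M(t_0)-B(t_0,y_k)}{h_k}.
\]

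The second step is to show that $y_k$ is nearly maximizing at time $t_0$, that is, $y_k\in\cM^{\varepsilon_k}(t_0)$ with $\varepsilon_k\to0$. Set $\varepsilon_k=M(t_0)-B(t_0,y_k)\ge0$. Applying the same expansion with any fixed $y$ gives $M(t_0+h)\ge M(t_0)-h\Lambda-h\omega(h)$, after taking the supremum over $y$. Combined with the displayed inequality, this yields
\[
 \varepsilon_k/h_k\le2\Lambda+2\omega(h_k)+h_k ,
\]
so $\varepsilon_k=O(h_k)\to0$. This is the one genuinely noncompact point: there is no convergent subsequence of $y_k$, so membership in a shrinking near-maximizer set has to replace a limit point. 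Here it comes purely from the uniform derivative bound.

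Finally, drop the nonpositive term $-\varepsilon_k/h_k$ and bound $\partial_tB(t_0,y_k)\le\sup_{y\in\cM^{\varepsilon_k}(t_0)}\partial_tB(t_0,y)$. Letting $k\to\infty$ removes the error terms $\omega(h_k)+h_k$. Because the sets $\cM^\varepsilon(t_0)$ decrease as $\varepsilon\downarrow0$, the resulting $\limsup$ along $\varepsilon_k$ is bounded by $\limsup_{\varepsilon\downarrow0}$, which is \eqref{eq:noncompact-danskin}. The main care point is making sure the error from varying time is uniform in $y$; this is exactly hypothesis \eqref{eq:uniform-time-derivative}, which is why pointwise differentiability alone would not suffice.
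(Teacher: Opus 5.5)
Your proof is correct and follows essentially the same route as the paper. Both choose $h^2$-near-maximizers $y_h$ at time $t_0+h$, use the uniform bound on $\partial_tB$ to show these are $O(h)$-near-maximizers at $t_0$, and pass to the limit through the fundamental theorem of calculus and hypothesis \eqref{eq:uniform-time-derivative}, never extracting a convergent subsequence in $y$.
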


\begin{proof}
Assumption \eqref{eq:uniform-time-derivative} supplies the neighborhood
bound used below: for $|t-t_0|$ sufficiently small,
\[
 \sup_y|\partial_tB(t,y)|
 \le \sup_y|\partial_tB(t_0,y)|+1.
\]
For $h>0$ choose $y_h$ such that
$B(t_0+h,y_h)\ge M(t_0+h)-h^2$.  Uniform boundedness of $\partial_tB$
gives $|M(t_0+h)-M(t_0)|\le Ch$, and therefore
$y_h\in\cM^{\varepsilon_h}(t_0)$ for some $\varepsilon_h\to0$.  Moreover,
\begin{align*}
 \frac{M(t_0+h)-M(t_0)}h
 &\le\frac{B(t_0+h,y_h)-B(t_0,y_h)}h+h\\
 &=\frac1h\int_{t_0}^{t_0+h}\partial_tB(s,y_h)\dd s+h.
\end{align*}
Use \eqref{eq:uniform-time-derivative} and take the upper limit.
The points $y_h$ may escape to infinity.  No subsequence or compactness in
$y$ is used: both the neighborhood bound and the convergence in
\eqref{eq:uniform-time-derivative} are uniform over the entire real line.
\end{proof}

Since $k_R$ is a countable sum of nonlinear functions of nonattained
suprema, we must also justify composition with the saturation and passage to
the infinite sum.

\begin{lemma}[Composition and countable summation]\label{lem:danskin-sum}
Let $J$ be a compact interval with nonempty interior, and let $B_n\ge0$
satisfy the hypotheses of Lemma~\ref{lem:danskin} at every time in a
neighborhood of $J$.  Let
$\Phi\in C^1((0,\infty))\cap C([0,\infty))$ be increasing with
$\Phi(0)=0$.  Suppose there are nonnegative, measurable, locally integrable
functions $d_n(t)$ on that neighborhood such that, for $t\in J$, uniformly in
$0<\eta\le1$,
\begin{equation}\label{eq:danskin-majorant}
 \sup_y\Phi'(B_n(t,y)+\eta)|\partial_tB_n(t,y)|\le d_n(t),
\end{equation}
and
\begin{equation}\label{eq:danskin-uniform-tail}
 \lim_{N\to\infty}\sup_{t\in J}\sum_{|n|>N}d_n(t)=0.
\end{equation}
Put
\[
 M_n(t)=\sup_yB_n(t,y),\qquad
 \cM_n^\varepsilon(t)=
 \{y:B_n(t,y)\ge M_n(t)-\varepsilon\}.
\]
Assume that, for each $n$, either $M_n(t)>0$ for every $t\in J$ or
$M_n\equiv0$ on $J$.  Assume also that
$\sum_n\Phi(M_n(t_*))<\infty$ at one time $t_*\in J$.
For a positive box define
\[
 a_n(t)=\limsup_{\varepsilon\downarrow0}
 \sup_{y\in\cM_n^\varepsilon(t)}
 \Phi'(B_n(t,y))\partial_tB_n(t,y),
\]
where only $0<\varepsilon<M_n(t)/2$ is needed, so every argument of
$\Phi'$ is positive.  For an identically zero box set
\begin{equation}\label{eq:active-zero-definition}
 a_n(t)=0\qquad\text{if }M_n\equiv0\text{ on }J.
\end{equation}
Then $F(t)=\sum_n\Phi(M_n(t))$ is finite and continuous on $J$, and for
every $t\in\operatorname{int}J$,
\begin{equation}
 D_t^+F(t)\le\sum_na_n(t),
 \label{eq:countable-danskin}
\end{equation}
where the series on the right converges absolutely.
\end{lemma}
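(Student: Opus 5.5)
The proof has two parts. First we show that $F$ is finite and continuous. Then we combine Lemma~\ref{lem:danskin}, applied box by box after composing with $\Phi$, with a tail argument that lets the upper Dini derivative pass through the infinite sum.

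\emph{Per-box Lipschitz bounds.} Fix $n$ with $M_n>0$ on $J$. We compare $\Phi(B_n+\eta)$ across times. The mean value theorem and \eqref{eq:danskin-majorant} give
\[
 |\Phi(B_n(t,y)+\eta)-\Phi(B_n(s,y)+\eta)|\le\Bigl|\int_s^t d_n(\tau)\dd\tau\Bigr|
\]
uniformly in $y$ and $\eta\in(0,1]$. Taking the supremum over $y$ and using monotonicity of $\Phi$, we get
\[
 |\Phi(M_n(t)+\eta)-\Phi(M_n(s)+\eta)|\le\Bigl|\int_s^td_n\Bigr|.
\]
Letting $\eta\downarrow0$ and using continuity of $\Phi$ at $0$ yields
\[
 |\Phi(M_n(t))-\Phi(M_n(s))|\le\Bigl|\int_s^td_n\Bigr|.
\]
Identically zero boxes satisfy this trivially. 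Summing over $n$, finiteness at $t_*$ gives finiteness on $J$. The bound
\[
 |F(t)-F(s)|\le\int_s^t\sum_n d_n,
\]
together with local integrability and the uniform tail \eqref{eq:danskin-uniform-tail}, gives continuity. For continuity one may instead use, finite box by finite box, continuity of each $M_n$, plus a uniform smallness of $\sum_{|n|>N}\int_s^t d_n$. Note that $\sum_{|n|\le N}d_n$ is integrable, and the tail is uniformly small in $t$.

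\emph{Per-box Dini derivative.} For a positive box, $B_n\ge M_n-\varepsilon>M_n/2>0$ on $\cM_n^\varepsilon$. Since $\Phi$ is $C^1$ and increasing on $(0,\infty)$, the sup over $y$ commutes with $\Phi$, so $\Phi(M_n(t))=\sup_y\Phi(B_n(t,y))$. We then apply the argument of Lemma~\ref{lem:danskin} directly to $\widetilde B=\Phi(\max(B_n,M_n(t_0)/4))$ near $t_0$. Near-maximizers keep $B_n$ away from zero, and the hypotheses transfer by uniform continuity of $\Phi'$ on $[M_n(t_0)/4,\sup B_n]$ together with the uniform convergence \eqref{eq:uniform-time-derivative}. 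This gives $D_t^+\Phi(M_n)(t_0)\le a_n(t_0)$, and $|a_n|\le d_n$ follows from \eqref{eq:danskin-majorant} with $\eta\downarrow0$. The same $|a_n|\le d_n$ gives absolute convergence of $\sum_na_n$.

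\emph{Passing to the sum.} Split $F=F_{\le N}+F_{>N}$. For the finite part, subadditivity of $\limsup$ gives
\[
 D_t^+F_{\le N}\le\sum_{|n|\le N}a_n.
\]
For the tail, the Lipschitz bound above gives
\[
 \frac{F_{>N}(t+h)-F_{>N}(t)}h\le\sup_{\tau\in J}\sum_{|n|>N}d_n(\tau)=:\epsilon_N.
\]
Hence $D_t^+F\le\sum_{|n|\le N}a_n+\epsilon_N$, and letting $N\to\infty$ concludes the proof.

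The main obstacle is the composition step. The subtlety is to show that $\Phi'$ evaluated along near-maximizers can replace $\Phi'(M_n)$ when $\Phi'$ is merely continuous and the maximizers escape to infinity. We handle this by uniform continuity of $\Phi'$ on a compact interval bounded away from zero, which is guaranteed by positivity of $M_n$ near $t_0$ and boundedness of $B_n$.
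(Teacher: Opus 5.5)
Your proposal is correct and follows essentially the paper's proof: a regularized increment bound for $\Phi(B_n+\eta)$, summation to get finiteness and continuity of $F$, a per-box Dini bound $D_t^+\Phi(M_n)\le a_n$ with $|a_n|\le d_n$, and the $\epsilon_N$ tail split. The increment bound should be derived from the fundamental theorem of calculus rather than the mean value theorem, since $d_n$ is only integrable. The one real variation is the composition step. You rerun the Danskin argument on the clipped function $\Phi(\max(B_n,M_n(t_0)/4))$. The paper instead uses the chain rule for $\Phi$ at $M_n(t)>0$ (with $M_n$ locally Lipschitz), applies Lemma~\ref{lem:danskin} to $B_n$ itself, and then replaces $\Phi'(M_n(t))$ by $\Phi'(B_n(t,y))$ on the active sets using continuity of $\Phi'$ and boundedness of $\partial_tB_n$.
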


\begin{proof}
Regularization will be used only to estimate increments.  Put
$\Phi_\eta(B)=\Phi(B+\eta)-\Phi(\eta)$.  For fixed $n,y$ and $s<t$ in
$J$, the fundamental theorem of calculus and
\eqref{eq:danskin-majorant} give
\[
 |\Phi_\eta(B_n(t,y))-\Phi_\eta(B_n(s,y))|
 \le\int_s^td_n(r)\dd r.
\]
Since $\Phi_\eta$ is increasing and continuous,
$\sup_y\Phi_\eta(B_n(t,y))=\Phi_\eta(M_n(t))$.  Taking suprema in the
two directions and then letting $\eta\downarrow0$ for these fixed times
gives, with $f_n(t)=\Phi(M_n(t))$,
\begin{equation}\label{eq:danskin-increment-bound}
 |f_n(t)-f_n(s)|\le\int_s^td_n(r)\dd r.
\end{equation}
This step does not pass a Dini derivative through the regularization limit.

Write
\[
 \epsilon_N=\sup_{t\in J}\sum_{|n|>N}d_n(t)\longrightarrow0.
\]
For sufficiently large $N$, the tail is bounded by the finite constant
$\epsilon_N$, while each of the finitely many remaining $d_n$ is
integrable on $J$.  Thus $\sum_nd_n$ is integrable on $J$.
Summing \eqref{eq:danskin-increment-bound} from $t_*$ proves that $F$ is
finite everywhere on $J$.  Moreover, if
$E_N(t)=\sum_{|n|>N}f_n(t)$, then
\begin{equation}\label{eq:danskin-tail-increment}
 |E_N(t)-E_N(s)|\le |t-s|\epsilon_N.
\end{equation}
The tails at $t_*$ tend to zero, so this bound also proves uniform
convergence of the series defining $F$, and hence continuity.

We now differentiate each fixed summand without regularization.  The
uniform time-derivative hypothesis in Lemma~\ref{lem:danskin} makes $M_n$
locally Lipschitz.  If $M_n(t)>0$, differentiability of $\Phi$ at $M_n(t)$
and this local Lipschitz bound give
\[
 f_n(t+h)-f_n(t)
 =\Phi'(M_n(t))(M_n(t+h)-M_n(t))+o(|h|).
\]
On $\cM_n^\varepsilon(t)$, $B_n(t,y)$ tends uniformly to $M_n(t)$ as
$\varepsilon\downarrow0$.  Continuity of $\Phi'$ near that positive
value and boundedness of $\partial_tB_n(t,\cdot)$ therefore allow
$\Phi'(M_n(t))$ to be replaced by $\Phi'(B_n(t,y))$ in the active limit.
Lemma~\ref{lem:danskin} yields $D_t^+f_n(t)\le a_n(t)$.
If $M_n\equiv0$, then $f_n\equiv0$ and the same inequality holds with
$a_n=0$.  For a positive box, letting $\eta\downarrow0$ in
\eqref{eq:danskin-majorant} at positive arguments shows
$|a_n(t)|\le d_n(t)$; this bound also holds for a zero box.
Consequently $\sum_na_n(t)$ is absolutely convergent.

For each finite cutoff, subadditivity of the upper right Dini derivative
and \eqref{eq:danskin-tail-increment} give
\[
 D_t^+F(t)
 \le D_t^+\sum_{|n|\le N}f_n(t)+\epsilon_N
 \le\sum_{|n|\le N}a_n(t)+\epsilon_N.
\]
Letting $N\to\infty$ proves \eqref{eq:countable-danskin}.  No lower
bound on $M_n(t)$ uniform in $n$ is required: differentiation is performed
at each fixed positive box, and the uniform increment bound controls the
remaining tail.
\end{proof}

The zero-persistence hypothesis in this lemma is essential.  For example,
on $[-1,1]$ take $B_0(t,y)=t^2$, all other $B_n=0$, and
$\Phi(s)=\sqrt{s/(1+s)}$.  The regularized derivative is bounded by
$d_0=1$, with all other $d_n=0$, because
\[
 \Phi'(t^2+\eta)|2t|
 =\frac{|t|}{\sqrt{t^2+\eta}(1+t^2+\eta)^{3/2}}\le1.
\]
Nevertheless $D_t^+\Phi(M_0(t))|_{t=0}=1$, whereas
$\Phi'(B_0(0,y)+\eta)\partial_tB_0(0,y)=0$ for every $\eta>0$.
Thus a regularized value at an isolated zero cannot define the required
Dini bound.  The determinant avoids this obstruction by the following
rigidity property.

\begin{lemma}[Rigidity of a zero feature box]\label{lem:zero-box-rigidity}
For $p\in M$, $n\in\Z$, $R\ge1$, and $y\in\R$,
\[
 B_{n,R}[p](y)=0\quad\Longleftrightarrow\quad p=0.
\]
Consequently, along a mild solution on a connected lifespan $I$, either
$q\equiv0$ and every box vanishes identically, or
$B_{n,R}(t,y)>0$ for every $t\in I$, $n\in\Z$, and $y\in\R$.
\end{lemma}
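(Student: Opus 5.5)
The plan is to prove the static equivalence first and then transfer it to solutions.

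\emph{Static equivalence.} If $p=0$, then every Weyl ratio vanishes, so $S_m=0$ and $E_\kappa=0$; hence $B_{n,R}[p](y)=0$. For the converse, suppose $B_{n,R}[p](y)=0$. The window $\chi_{R,y}^2$ is strictly positive everywhere, and $E_\kappa[M_np]$ is continuous, because the ratios are locally Lipschitz. Therefore $E_\kappa[M_np]\equiv0$ on $\R$, so $S_m[M_np]\equiv0$ for every $m\ge1$. By the covariance \eqref{eq:covariance-plus} and Lemma~\ref{lem:fock-realization}, this says exactly that $F_n^+=h_\kappa(A_p^++n)(\bar pe_1)=0$ in $L^2_{\uloc}$.

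We would then show $\bar pe_1=0$. Lemma~\ref{lem:uloc-resolvent-factorization}(ii) states that $h_\kappa(A_p^++n)$ is injective on $L^2_{\uloc}$; this is the cleanest route and gives $p=0$ at once. As a cross-check, one can work with the first grade directly. The moment identity \eqref{eq:moment-plus}, applied at the three heights, expresses $S_1=\sum_ad_ag_a$. From $S_1=S_2=S_3=0$ and the Riccati equation \eqref{eq:riccati-plus}, summing with the weights $d_a$ gives
\[
 0=(S_1)_x=\bar p\sum_ad_a-2T_1-pS_2=-2T_1,
\]
so $T_1=0$ as well. That is weaker than what is needed, so the injectivity route is preferable. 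Alternatively, the dual frame (Proposition~\ref{prop:dual-frame}) combined with the spreading estimate \eqref{eq:reverse-spreading} shows that $\beta_{n,R}=0$ forces $\beta_{m,R}=0$ for all $m$. Then \eqref{eq:dual-frame-plus} gives $\bar pe_1=0$, which is the same argument as at the end of Theorem~\ref{thm:coercivity}.

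\emph{Dynamic consequence.} By \eqref{eq:static-dynamic-window}, $B_{n,R}(t,y)=B_{n,R}[q(t)](y+2nt)$. The static equivalence therefore shows that $B_{n,R}(t,y)=0$ for some $(t,n,y)$ if and only if $q(t)=0$. If $q(t_0)=0$ at some $t_0\in I$, then uniqueness of mild solutions (Proposition~\ref{prop:local}), together with the fact that the zero function is a global mild solution, forces $q\equiv0$ on the connected interval $I$. In that case every box vanishes identically. Otherwise $q(t)\neq0$ for all $t$, and every box is strictly positive.

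The main obstacle is the passage from pointwise vanishing of the density to vanishing of the feature in $L^2_{\uloc}$, and then to $p=0$. This requires the continuity of $E_\kappa$, the strict positivity of the window, and the injectivity statement of Lemma~\ref{lem:uloc-resolvent-factorization}. All of these are available from earlier results, so no new estimate is needed.
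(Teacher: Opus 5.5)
Your proof is correct, but the static direction takes a different route from the paper.

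\textbf{Your route.} You use the Fock realization and gauge covariance to rewrite $S[M_np]\equiv0$ as $F_n^+=0$. You then invoke the injectivity of $h_\kappa(A_p^++n)$ on $L^2_{\uloc}$ from Lemma~\ref{lem:uloc-resolvent-factorization}(ii), or equivalently the spreading plus dual-frame argument used at the end of Theorem~\ref{thm:coercivity}. All of these results are established before this lemma, so there is no circularity.

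\textbf{The paper's route.} The paper works only with the scalar Riccati equation. Put $z_a=g_{a\kappa,+}[M_np]$. Substituting $z_3=2z_2-z_1$ into $S_1=S_2=0$ gives $2(z_1-z_2)^2=0$, so $z_1=z_2=z_3=:z$. Subtracting the Riccati equations at heights $\kappa$ and $2\kappa$ gives $2\kappa z=0$. The Riccati equation for $z=0$ then forces $\overline{M_np}=0$.

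\textbf{Comparison.} The paper's argument needs only two grades and no operator theory. Yours reuses the uniformly local functional calculus and shows the rigidity is the same injectivity mechanism that drives coercivity ($K_R=0$ iff $q=0$).

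\textbf{On your cross-check.} It was not weaker than needed. The conditions $S_1=0$ and $T_1=0$ say that $(z_1,z_2,z_3)$ is annihilated by the rows $(1,-2,1)$ and $(1,-4,3)$. The kernel of this system is spanned by $(1,1,1)$, so $z_1=z_2=z_3$. The same Riccati subtraction then finishes an elementary proof essentially identical to the paper's.

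\textbf{Dynamic step.} You apply uniqueness of mild solutions restarted at a time where $q$ vanishes, comparing with the zero solution. This is equivalent to the paper's open--closed argument on the connected lifespan.
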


\begin{proof}
If one windowed integral is zero, positivity of $\chi_{R,y}$ everywhere
and of every summand in \eqref{eq:positive-density} imply
$S_m[M_np]=0$ almost everywhere for every $m\ge1$.
Only $S_1=S_2=0$ is needed.  Put $v=M_np$ and
$z_a=g_{a\kappa,+}[v]$ for $a=1,2,3$.  The normalized Green functions
are locally Lipschitz, so their almost-everywhere identities extend
everywhere.  Thus
\[
 z_1-2z_2+z_3=0,\qquad z_1^2-2z_2^2+z_3^2=0.
\]
Substitution of $z_3=2z_2-z_1$ into the second identity gives
$2(z_1-z_2)^2=0$, hence $z_1=z_2=z_3=:z$.  Subtracting
\eqref{eq:riccati-plus} at heights $\kappa$ and $2\kappa$ gives
$0=2\kappa z$ almost everywhere.  Therefore $z=0$, and its Riccati
equation yields $\bar v=0$ almost everywhere.  Since modulation is
invertible, $p=0$.
Conversely, the zero potential has zero Green ratios and zero density.

For a mild solution the set $\{t\in I:q(t)=0\}$ is relatively closed
by continuity in $M$.  It is also relatively open: local uniqueness from
Proposition~\ref{prop:local}, applied both forwards and backwards from a
zero datum, identifies the solution with zero in a neighborhood of that
time.  Connectedness of $I$ makes this set either empty or all of $I$.
Apply the first assertion to $p=q(t)$ and use the static--dynamic window
identity \eqref{eq:static-dynamic-window}.  This argument uses only local
uniqueness and the Riccati equations, not the large-window differential
inequality or global existence.
\end{proof}

For the determinant, set
\begin{equation}\label{eq:active-set}
 \cM_{n,R}^\varepsilon(t)
 =\{y:B_{n,R}(t,y)\ge\sup_zB_{n,R}(t,z)-\varepsilon\},
\end{equation}
and
\begin{equation}\label{eq:raw-flux}
 \cF_{n,R}(t,y)
 =\int(\chi_{R,y}^2)'J_\kappa[q_n](t,x)\dd x.
\end{equation}
The positive signed active shear and its absolute-flux majorant are
\begin{align}
 \cS_{\kappa,R}(t)
 &=\frac1\kappa\sum_n\limsup_{\varepsilon\downarrow0}
 \sup_{y\in\cM_{n,R}^\varepsilon(t)}
 \left[\widehat\Theta_\kappa'(B_{n,R}(t,y))
       \cF_{n,R}(t,y)\right]_+,
 \label{eq:signed-shear}\\
 \cC_{\kappa,R}(t)
 &=\frac1\kappa\sum_n\limsup_{\varepsilon\downarrow0}
 \sup_{y\in\cM_{n,R}^\varepsilon(t)}
 \widehat\Theta_\kappa'(B_{n,R}(t,y))
       |\cF_{n,R}(t,y)|.
 \label{eq:absolute-shear}
\end{align}
For the identically zero solution define both shears to be zero.
Lemma~\ref{lem:zero-box-rigidity} shows that this is the only solution
with a zero box.  Every box of a nonzero solution is strictly positive,
so the displayed derivatives of the saturation are evaluated only at
positive arguments.  For the zero solution the current and the raw flux
vanish identically as well.  Thus its zero shear agrees with any
regularization, but no derivative of a limiting regularization is used.
For mild solutions all functions of $(t,y)$ in these formulas are
continuous.  Each active supremum is Borel measurable: write the active
limit as the infimum over $k\ge1$ of the supremum over
$\{y:B_{n,R}(t,y)>\sup_zB_{n,R}(t,z)-1/k\}$ and restrict both spatial
suprema to $y,z\in\mathbb Q$.  Hence $\cS_{\kappa,R}$ and
$\cC_{\kappa,R}$, being nonnegative countable sums, are measurable and
their time integrals on compact intervals are well-defined once the
majorant below is established.
To justify the rational restriction, continuity gives
$\sup_{z\in\R}B(t,z)=\sup_{z\in\mathbb Q}B(t,z)$.  For fixed $k$, the strict
active set $\{y:B(t,y)>\sup_zB(t,z)-1/k\}$ is open; continuity of the flux
expression therefore makes its supremum equal to the supremum over rational
$y$ in that open set.  Rational evaluations are Borel functions of $t$,
countable suprema are Borel, and the final active limsup is a countable
infimum over $k$.  This proves the claimed measurability.

We now combine the active-supremum calculus with the saturated sequence
bounds to obtain the differential estimate driving the global bootstrap.

\begin{proposition}[Large-window active flux]\label{prop:large-window-flux}
Along every mild $M$ solution,
\begin{equation}\label{eq:Dini-chain}
 D_t^+k_R(t)\le\cS_{\kappa,R}(t)\le\cC_{\kappa,R}(t)
\end{equation}
and
\begin{equation}\label{eq:large-window-differential}
 \cC_{\kappa,R}(t)
 \le\frac{C_{\kappa,\chi}}R
 k_R(t)(1+k_R(t))^2.
\end{equation}
\end{proposition}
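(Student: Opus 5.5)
The plan has two parts: first verify the hypotheses of Lemma~\ref{lem:danskin-sum} with $\Phi=\widehat\Theta_\kappa$ and $B_n=B_{n,R}$, then estimate the active terms by Lemma~\ref{lem:raw-flux-bound} and the spectral bounds of Section~\ref{sec:large-window-spectral}.

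\emph{Reduction to the Danskin calculus.} If $q\equiv0$, all quantities vanish and there is nothing to prove. Otherwise Lemma~\ref{lem:zero-box-rigidity} gives $B_{n,R}(t,y)>0$ everywhere, so the zero-persistence hypothesis holds. By \eqref{eq:B-flux} we have $\partial_tB_{n,R}=\cF_{n,R}$. We need this in a continuous-time form, with $\cF_{n,R}$ continuous in $(t,y)$ and with $\sup_y|\cF_{n,R}(t,y)-\cF_{n,R}(t_0,y)|\to0$. I would derive it from the uniform convergence of $J_\kappa$ along convergent paths in Proposition~\ref{prop:micro-law}. Since $q\in C(I;M)$, the map $t\mapsto J_\kappa[q_n(t)]$ is continuous uniformly in $x$, with the translation $2nt$ absorbed by \eqref{eq:static-dynamic-window}. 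Because $\|(\chi_{R,y}^2)'\|_{L^1}\le C/R$, the flux is uniformly continuous in $t$ uniformly in $y$. The distributional identity \eqref{eq:B-flux}, integrated in time against this continuous integrand, then gives classical $C^1$ dependence in $t$.

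For the majorant \eqref{eq:danskin-majorant}, note $\widehat\Theta_\kappa'(B)=\tfrac{\kappa^2}{2}B^{-1/2}(\kappa+B)^{-3/2}$, which is decreasing in $B$, so the regularization $+\eta$ only helps. With $B=30\kappa b^2$, Lemma~\ref{lem:raw-flux-bound} gives
\[
 \widehat\Theta_\kappa'(B)|\cF|
 \le\frac{C\kappa}{R}\frac{c+\|q\|_\infty b}{(1+30b^2)^{3/2}}.
\]
I take $d_n(t)$ to be this expression with $b,c$ replaced by $\beta_{n,R},\gamma_{n,R}$. This replacement is the one delicate point: $(1+b^2)^{-3/2}$ decreases in $b$, so we may not simply replace $b$ by its supremum. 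For $d_n$ itself I would use the cruder bound $C R^{-1}(\gamma_{n,R}+\|q\|_\infty\beta_{n,R})$. By Proposition~\ref{prop:feature-continuity} this is summable with tails uniformly small on compact $J$, which gives \eqref{eq:danskin-uniform-tail}. Lemma~\ref{lem:danskin-sum} then yields $D_t^+k_R\le\kappa^{-1}\sum_na_n\le\cS\le\cC$, since $a_n\le[\cdot]_+$ termwise.

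\emph{The bound on $\cC$, which is the main obstacle.} On the active set, $b_{n,R}(t,y)^2\ge\beta_{n,R}^2-\varepsilon/(30\kappa)$. As $\varepsilon\downarrow0$ the denominator therefore tends to $(1+30\beta_{n,R}^2)^{3/2}$, while the numerator is bounded using $b\le\beta$ and $c\le\gamma$. This gives
\[
 \cC\le\frac CR\bigl(I_{c,R}+\|q\|_\infty I_{b,R}\bigr).
\]
Lemma~\ref{lem:saturated-sums} bounds $I_{b,R}\le Ck$ and $I_{c,R}\le C(k+k^3)$, and Corollary~\ref{cor:amplitude} gives $\|q\|_\infty\le C(k+k^2)$. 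Together these give $\cC\le CR^{-1}(k+k^3+k^2+k^3)\le CR^{-1}k(1+k)^2$, which is \eqref{eq:large-window-differential}.

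I expect the hardest step to be rigorously justifying the uniform-in-$y$ continuous-time flux identity for mild solutions, that is, the hypotheses of Lemma~\ref{lem:danskin}. I would handle it by approximating with smooth solutions via Lemma~\ref{lem:frequency-truncation}, using the uniform convergence of $E_\kappa$ and $J_\kappa$ on $I_0\times\R$ and the integrability of $(\chi^2_{R,y})'$ uniformly in $y$.
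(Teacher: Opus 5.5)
Your proposal is correct and follows essentially the same route as the paper. Both arguments use rigidity of zero boxes, then Lemma~\ref{lem:danskin-sum} with the $\eta$-uniform majorant $d_n=CR^{-1}(\gamma_{n,R}+\|q\|_\infty\beta_{n,R})$ coming from the monotonicity of $\widehat\Theta_\kappa'$, and finally the saturation cancellation on the active sets feeding into Lemma~\ref{lem:saturated-sums} and Corollary~\ref{cor:amplitude}.

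One harmless slip: $\|(\chi_{R,y}^2)'\|_{L^1}=\|(\chi^2)'\|_{L^1}$ is independent of $R$, not $O(1/R)$. The $R^{-1}$ gain comes only from the pointwise bound $|(\chi_{R,y}^2)'|\le C_\chi R^{-1}\chi_{R,y}^2$ in Lemma~\ref{lem:raw-flux-bound}. Your continuity argument for $\partial_tB_{n,R}$ survives, since it needs only the finiteness of this norm to get continuity uniformly in $y$.
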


\begin{proof}
If $q$ vanishes at any time, Lemma~\ref{lem:zero-box-rigidity} gives the
identically zero solution, for which all the assertions are immediate.
Otherwise every $M_n(t)=\sup_yB_{n,R}(t,y)$ is strictly positive on the
lifespan.  Fix a compact interval $J$ inside that lifespan, with the time
under consideration in its interior.  For Lemma~\ref{lem:danskin-sum} take
$B_n=B_{n,R}$ and
$\Phi=\kappa^{-1}\widehat\Theta_\kappa$.  For each fixed $n$, $q_n\in C(J;M)$: translations are strongly
continuous on $M$ by the same finite-box approximation used for the free
group in Lemma~\ref{lem:modulation}.  Proposition~\ref{prop:micro-law}
therefore gives continuous maps $t\mapsto E_\kappa[q_n(t)]$ and
$t\mapsto J_\kappa[q_n(t)]$ into $L^\infty$.
Test its distributional law against the window, first with an additional
compact spatial cutoff.  Boundedness of the density and current on $J$
and integrability of the window and its derivative permit removal of this
cutoff.  Consequently, for every $s,t\in J$,
\[
 B_{n,R}(t,y)-B_{n,R}(s,y)
 =\int_s^t\int(\chi_{R,y}^2)'J_\kappa[q_n(r)]\dd x\dd r.
\]
The identity first holds for each $y$ and then for all $y$ by continuity.
Its right side is continuously differentiable in $t$, uniformly in $y$,
since
\[
 \sup_y|\partial_tB_{n,R}(t,y)-\partial_tB_{n,R}(s,y)|
 \le\|(\chi_{R,0}^2)'\|_1
       \|J_\kappa[q_n(t)]-J_\kappa[q_n(s)]\|_\infty\longrightarrow0.
\]
Thus \eqref{eq:uniform-time-derivative} holds even though the spatial
supremum need not be attained.  The positivity hypothesis of
Lemma~\ref{lem:danskin-sum} has already been verified by
Lemma~\ref{lem:zero-box-rigidity}.  The initial finiteness
required in Lemma~\ref{lem:danskin-sum} follows from
Proposition~\ref{prop:feature-summability}.  The passage to the infinite
sum follows from the following majorant.  Since
$\widehat\Theta_\kappa'$ is decreasing,
Lemma~\ref{lem:raw-flux-bound} gives, uniformly in $y$ and
$0<\eta\le1$,
\begin{equation}\label{eq:flux-tail-majorant}
 \frac1\kappa\widehat\Theta_\kappa'(B_{n,R}+\eta)
       |\cF_{n,R}|
 \le\frac C R\bigl(\gamma_{n,R}
              +\norm q_\infty\beta_{n,R}\bigr)=:d_n(t).
\end{equation}
Proposition~\ref{prop:feature-continuity} and
$M\hookrightarrow L^\infty$ show that $t\mapsto(d_n(t))_n$ is continuous
into $\ell^1$.  On compact time intervals its tails are therefore uniform,
which verifies \eqref{eq:danskin-uniform-tail}.
For the only product term, writing $Q(t)=\norm{q(t)}_\infty$ gives the exact
estimate
\[
 \|Q(t)\boldsymbol\beta(t)-Q(s)\boldsymbol\beta(s)\|_{\ell^1}
 \le |Q(t)-Q(s)|\|\boldsymbol\beta(t)\|_{\ell^1}
     +Q(s)\|\boldsymbol\beta(t)-\boldsymbol\beta(s)\|_{\ell^1}.
\]
The embedding $M\hookrightarrow L^\infty$ makes $Q$ continuous, and
Proposition~\ref{prop:feature-continuity} handles the other two factors.
All hypotheses of Lemma~\ref{lem:danskin-sum} now hold.  It gives
$D_t^+k_R\le\sum_na_n$, where $a_n$ is the signed active limit for
$\Phi=\kappa^{-1}\widehat\Theta_\kappa$.  Replacing each active
expression by its positive part, and then by its absolute value, gives
\eqref{eq:Dini-chain}.  The same majorant $d_n$ controls all these
series.  In particular, the regularization in
\eqref{eq:flux-tail-majorant} is used only for the increment and tail
bounds; the active Dini derivatives are taken at fixed positive boxes.

For $B=30\kappa b^2$, direct differentiation gives the exact cancellation
\begin{equation}\label{eq:saturation-cancellation}
 \widehat\Theta_\kappa'(B)\,\kappa b
 =\frac\kappa{2\sqrt{30}}(1+30b^2)^{-3/2}.
\end{equation}
Indeed,
\[
 \widehat\Theta_\kappa'(B)
 =\frac{\kappa^2}{2\sqrt B(\kappa+B)^{3/2}},
\]
and substitution of
$\sqrt B=\sqrt{30\kappa}\,b$ and
$\kappa+B=\kappa(1+30b^2)$ gives
\eqref{eq:saturation-cancellation} after multiplication by $\kappa b$.
At an asymptotically maximizing window, $b_{n,R}(y)\to\beta_{n,R}$ and
$c_{n,R}(y)\le\gamma_{n,R}$.
Lemma~\ref{lem:raw-flux-bound} and
\eqref{eq:saturation-cancellation} yield
\begin{equation}\label{eq:flux-Ib-Ic}
 \cC_{\kappa,R}
 \le\frac CR\left(I_{c,R}+QI_{b,R}\right),
 \qquad Q=\norm q_\infty.
\end{equation}
Lemmas~\ref{lem:saturated-sums} and Corollary~\ref{cor:amplitude} give
\[
 I_{c,R}+QI_{b,R}
 \le C(k_R+k_R^3)+C(k_R+k_R^2)k_R
 \le Ck_R(1+k_R)^2.
\]
This proves \eqref{eq:large-window-differential}.
\end{proof}

To integrate the upper-Dini differential inequality for \(k_R\), we use the
following elementary Gronwall-type comparison.

\begin{lemma}[Dini comparison]\label{lem:Dini-comparison}
Let $f:(a,b)\to(0,\infty)$ be continuous.  If
$[s,s+\sigma]\subset(a,b)$ and
$D_t^+\log f(t)\le A$ on $[s,s+\sigma]$, then
\begin{equation}\label{eq:Dini-Gronwall}
 f(t)\le f(s)e^{A(t-s)},\qquad s\le t\le s+\sigma.
\end{equation}
\end{lemma}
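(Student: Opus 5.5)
The plan is to reduce the statement to the classical fact that a continuous function whose upper right Dini derivative is nonpositive everywhere is nonincreasing. Since $f>0$ is continuous, $u(t)=\log f(t)$ is well defined and continuous on $[s,s+\sigma]$. Set
\[
 w_\epsilon(t)=u(t)-(A+\epsilon)(t-s),\qquad \epsilon>0.
\]
Subadditivity of the upper Dini derivative, applied with a differentiable second summand, gives $D_t^+w_\epsilon(t)\le A-(A+\epsilon)=-\epsilon<0$ for every $t\in[s,s+\sigma)$. Once $w_\epsilon(t)\le w_\epsilon(s)$ is shown, we get $\log f(t)\le\log f(s)+(A+\epsilon)(t-s)$. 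Letting $\epsilon\downarrow0$ and exponentiating then yields \eqref{eq:Dini-Gronwall}.

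The monotonicity step goes by contradiction. Suppose $w_\epsilon(t_1)>w_\epsilon(s)$ for some $t_1\in(s,s+\sigma]$. Let
\[
 t_0=\sup\{t\in[s,t_1]:w_\epsilon(t)\le w_\epsilon(s)\}.
\]
By continuity $w_\epsilon(t_0)\le w_\epsilon(s)$, so $t_0<t_1$, and $w_\epsilon(t)>w_\epsilon(s)\ge w_\epsilon(t_0)$ for all $t\in(t_0,t_1]$. Every right difference quotient at $t_0$ is then positive. Hence $D_t^+w_\epsilon(t_0)\ge0$, which contradicts $D_t^+w_\epsilon(t_0)\le-\epsilon$. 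Note that $t_0<t_1\le s+\sigma$, so the hypothesis does apply at $t_0$.

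The only point needing care is the endpoint convention. The hypothesis on $D_t^+\log f$ is used only at points of $[s,s+\sigma)$. Right derivatives at $s+\sigma$ are defined anyway because the interval lies inside the open interval $(a,b)$. The strict margin $\epsilon$ is what makes the contradiction argument work without assuming any differentiability. I expect no real obstacle: the only ingredient is the elementary ``sup of the good set'' argument, and the $\epsilon$-perturbation removes the borderline case $D_t^+=0$.
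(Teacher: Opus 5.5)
Your proposal is correct and is essentially the paper's argument: subtract a linear term with slope slightly above $A$ so that the upper right Dini derivative becomes strictly negative, then reach a contradiction at a point where all right difference quotients are nonnegative. The differences are cosmetic. You pick that point as the last time in $[s,t_1]$ at which $w_\epsilon \le w_\epsilon(s)$, while the paper uses a minimizer of $g$ on $[s,t_1]$. You also fix an arbitrary $\epsilon$ and let $\epsilon\downarrow0$ at the end, while the paper chooses $\epsilon$ from the slope of the assumed violation.
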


\begin{proof}
Put $h(t)=\log f(t)-A(t-s)$, so $D_t^+h(t)\le0$.  If
$h(t_1)>h(s)$ for some $t_1>s$, choose
$0<\varepsilon<(h(t_1)-h(s))/(t_1-s)$ and set
$g(t)=h(t)-\varepsilon(t-s)$.  Then $g(t_1)>g(s)$, so a point at which
$g$ attains its minimum on $[s,t_1]$ occurs at some $t_0<t_1$: the right
endpoint cannot be a minimizer because $g(t_1)>g(s)$.  The point $t_0$ may
equal the left endpoint $s$, but that causes no problem because the
derivative is an upper \emph{right} Dini derivative.  For every sufficiently
small $r>0$, $g(t_0+r)\ge g(t_0)$, and hence
$D_t^+g(t_0)\ge0$.  On the other hand,
$D_t^+g(t_0)\le-\varepsilon$, a contradiction.  Hence $h(t)\le h(s)$,
which is \eqref{eq:Dini-Gronwall}.
\end{proof}

Applying the comparison lemma to the large-window active-flux estimate gives
the collision-duration bound used in the proof of
Theorem~\ref{thm:main}.

\begin{proposition}[Large-window bootstrap]\label{prop:integrated-shear}
Let $q$ be a mild $M$ solution, let $[s,s+\sigma]$ be contained in its
lifespan, and put $k_s=k_R(q(s))$.
If
\begin{equation}\label{eq:collision-condition}
 \frac{C\sigma}{R}(1+2k_s)^2<\log2,
\end{equation}
then
\begin{equation}\label{eq:integrated-bootstrap}
 \sup_{s\le t\le s+\sigma}k_R(q(t))\le2k_s.
\end{equation}
\end{proposition}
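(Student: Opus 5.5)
We run a continuity (bootstrap) argument that combines the active-flux differential inequality of Proposition~\ref{prop:large-window-flux} with the Dini comparison Lemma~\ref{lem:Dini-comparison}. Here $C$ in \eqref{eq:collision-condition} is the constant $C_{\kappa,\chi}$ from \eqref{eq:large-window-differential}.

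\textbf{Trivial cases.} If $q$ vanishes at some time, Lemma~\ref{lem:zero-box-rigidity} makes $q\equiv0$. Then $k_R\equiv0$ and \eqref{eq:integrated-bootstrap} is immediate. Otherwise every box is strictly positive at every time, so $k_R(q(t))>0$ on the lifespan. By Proposition~\ref{prop:feature-continuity}, the map $t\mapsto k_R(q(t))=K_R(q(t))/\kappa$ is continuous, since $q\in C(I;M)$. Hence $f(t)=k_R(q(t))$ is a continuous positive function on an open interval containing $[s,s+\sigma]$, as required by Lemma~\ref{lem:Dini-comparison}.

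\textbf{Bootstrap.} Define
\[
 t^*=\sup\{t\in[s,s+\sigma]: f(r)\le 2k_s \text{ for all } r\in[s,t]\}.
\]
We have $t^*\ge s$, and by continuity $f\le 2k_s$ on $[s,t^*]$. On this interval, \eqref{eq:Dini-chain} and \eqref{eq:large-window-differential} give
\[
 D_t^+f\le \frac CR f(1+f)^2\le \frac CR(1+2k_s)^2 f.
\]
Since $\log$ is $C^1$ and increasing on $(0,\infty)$ and $f>0$ is continuous, the chain rule for upper Dini derivatives gives
\[
 D_t^+\log f=\frac{D_t^+f}{f}\le A:=\frac CR(1+2k_s)^2 .
\]
Lemma~\ref{lem:Dini-comparison} on $[s,t^*]$ then yields $f(t)\le k_s e^{A(t-s)}$. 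By \eqref{eq:collision-condition}, $A\sigma<\log2$, so
\[
 f(t)\le k_s e^{A\sigma}<2k_s \quad\text{on } [s,t^*].
\]
This inequality is strict whenever $k_s>0$, which holds in the nonzero case. If $t^*<s+\sigma$, continuity would extend the bound $f\le2k_s$ slightly past $t^*$, contradicting the definition of $t^*$. Therefore $t^*=s+\sigma$, which proves \eqref{eq:integrated-bootstrap}.

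\textbf{Main obstacle.} The only delicate point is passing the Dini inequality through the logarithm. The difference quotient satisfies
\[
 \frac{\log f(t+h)-\log f(t)}{h}=\frac{f(t+h)-f(t)}{h}\cdot\frac{\log f(t+h)-\log f(t)}{f(t+h)-f(t)},
\]
where the second factor is read as $1/f(t)$ when $f(t+h)=f(t)$. By continuity of $f$, this factor tends to $1/f(t)>0$ as $h\downarrow0$. So the limsup of the product equals $f(t)^{-1}D_t^+f(t)$. If $D_t^+f(t)=-\infty$, the bound holds trivially, and $D_t^+f(t)=+\infty$ is excluded by the majorant. Everything else is supplied by the previously established results.
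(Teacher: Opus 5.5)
Your proposal is correct and follows the same route as the paper. You reduce to the nonzero case via zero-box rigidity, bound $D_t^+\log k_R$ by $\frac{C}{R}(1+2k_s)^2$ on the first interval where $k_R\le 2k_s$ using Proposition~\ref{prop:large-window-flux}, and exclude a first exit through $2k_s$ with Lemma~\ref{lem:Dini-comparison} and \eqref{eq:collision-condition}. You also spell out details the paper leaves implicit: continuity of $t\mapsto k_R(q(t))$ from Proposition~\ref{prop:feature-continuity}, the chain rule for the upper Dini derivative through the logarithm, and the strict inequality $k_se^{A\sigma}<2k_s$ that drives the first-exit contradiction.
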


\begin{proof}
If $q(s)=0$, uniqueness gives the zero solution.  Otherwise
Lemma~\ref{lem:zero-box-rigidity} gives $k_R(q(t))>0$ throughout the
lifespan.  On the first interval on which
$k_R\le2k_s$, Proposition~\ref{prop:large-window-flux} gives
\[
 D_t^+\log k_R(t)\le\frac CR(1+2k_s)^2.
\]
Lemma~\ref{lem:Dini-comparison} and \eqref{eq:collision-condition} exclude
a first exit through $2k_s$.  This proves \eqref{eq:integrated-bootstrap}.
\end{proof}

\section{The perturbative frequency envelope}\label{sec:frequency-envelope}

To start the large-window bootstrap, we need an initial bound for $k_R$ that
is much smaller than the full coercive bound.  A small NLS dilation collapses
the original frequency centers, while a compensating window preserves the
relevant spatial mass.  It is important that the estimate below retains the
centers of the original frequency boxes.  We write
\[
 w(s)=\langle s\rangle^{-3},\qquad s\in\R.
\]

The next lemma separates each nonlinear adapted feature into its centered
linear contribution and an $\ell^1$-summable quadratic remainder.

\begin{lemma}[Centered feature estimate]
\label{lem:centered-feature}
There is \(\delta_\kappa>0\) with the following property.  Suppose that
\(p\in M\) and \(\norm p_M\le\delta_\kappa\), and put
\[
 a_j(p)=\norm{\square_jp}_{L^\infty}.
\]
Let $\beta_{n,1}(p)$ be the static feature norm
\[
 \beta_{n,1}(p)=\norm{F_n^+[p]}_{U_1},
\]
equivalently the quantity in \eqref{eq:b-feature} at time $t=0$; no
solution flow is involved.
Then there is a nonnegative sequence \(\rho=(\rho_n)\) such that
\begin{align}
 \beta_{n,1}(p)
 &\le C_\kappa\sum_{j\in\Z}w(n-j)a_j(p)+\rho_n,
 \label{eq:centered-envelope}\\
 \sum_{n\in\Z}\rho_n
 &\le C_\kappa\norm p_M^2.
 \label{eq:centered-remainder}
\end{align}
The same assertions hold if the fixed unit window is replaced by any fixed
slightly larger Schwartz window.
\end{lemma}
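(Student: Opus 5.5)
We follow the proof of Proposition~\ref{prop:feature-summability}, but under the global smallness hypothesis $\norm p_M\le\delta_\kappa$. This removes the exceptional set of bad indices entirely, so every index is good. We also keep track of the linear term with the sharper weight $w=\langle\cdot\rangle^{-3}$ rather than $\langle\cdot\rangle^{-2}$. Write $p_n=M_np$, $N=\norm p_M$, and let $A_n$ be the tail envelope \eqref{eq:tail-envelope} with $a_j=a_j(p)$. Since $A_n\le N/\kappa$, choosing $\delta_\kappa$ small makes $NA_n\le c_\kappa$ and $C_\kappa A_n\le 1/2$ for every $n$. Then the Riccati contraction in $M$ applies at all three heights, and it gives \eqref{eq:riccati-error-bounds} uniformly in $n$:
\[
 \norm{g_{\lambda_a,+}[p_n]}_M\le C_\kappa A_n,
 \qquad
 \norm{g_{\lambda_a,+}[p_n]-\mathcal R_{\lambda_a}\overline{p_n}}_M\le C_\kappa NA_n^2 .
\]

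Next we split $F_n^+$, which by gauge covariance \eqref{eq:covariance-plus} corresponds to $S[p_n]$, into grade one and higher grades. The linear part of $S_1[p_n]$ is $\sum_ad_a\mathcal R_{\lambda_a}\overline{p_n}=h_\kappa(D)\overline{p_n}$. For each input box $\overline{\square_jp}$ modulated by $n$, the localized symbol of $h_\kappa$ and its first two derivatives are $O_\kappa(\langle j-n\rangle^{-3})$ on its support. By the compactly supported multiplier estimate $\norm{\check a}_1\lesssim\norm a_1+\norm{a''}_1$ and $\norm{\cdot}_{U_1}\le\norm\chi_2\norm\cdot_\infty$, this part is bounded by $C_\kappa\sum_jw(n-j)a_j$. (One should double-check that modulation sends box $j$ to centre $-(j-n)$ after conjugation; only $|j-n|$ matters here.) This is the centered term in \eqref{eq:centered-envelope}. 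Define $\rho_n$ as the remainder, namely $C_\kappa NA_n^2$ from the nonlinear Riccati correction to $S_1$, plus the higher-grade contribution from \eqref{eq:higher-grade-feature}, which is at most $C_\kappa A_n^2$. Thus $\rho_n\le C_\kappa(1+N)A_n^2\le C_\kappa A_n^2$.

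The remainder estimate \eqref{eq:centered-remainder} then follows from discrete Young's inequality. The kernel $(\kappa+|\cdot|)^{-1}$ lies in $\ell^2$, so $\norm A_{\ell^2}\le C_\kappa\norm a_{\ell^1}=C_\kappa N$, and hence $\sum_n\rho_n\le C_\kappa N^2$. Here we use $N\asymp\norm p_M$, which holds because $a_j$ is exactly the summand in \eqref{eq:Mdef}. For a slightly larger fixed Schwartz window, the same argument applies with constants changed by Lemma~\ref{lem:windows}, since every bound above is really an $L^\infty$ or $M$ bound converted to $U_1$ via $\norm{\widetilde\chi}_2$.

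The main obstacle is ensuring the quadratic remainder is genuinely $O(N^2)$ in $\ell^1$ and not merely $O(N^2)$ per index. This requires the higher-grade bound $\norm{S_m[p_n]}_M\le C_\kappa(C_\kappa A_n)^m$ to carry a factor $A_n^2$ (not $N A_n$) for $m\ge2$. We will verify this from the algebra estimate applied to products of the three ratios, each bounded by $C_\kappa A_n$ in $M$. The $\ell^2$-summability of $A$ is then exactly what makes $\sum_nA_n^2$ finite and quadratic in $N$.
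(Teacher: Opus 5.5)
Your proposal is correct and follows essentially the same route as the paper. Global smallness makes every index good, and the first-grade linear term $h_\kappa(D)\overline{p_n}$ is kept with the cubic weight $w$. The remainder $\rho_n=C_\kappa(1+N)A_n^2$, made up of the Riccati correction and the higher grades from \eqref{eq:higher-grade-feature}, is then summed using $\|A\|_{\ell^2}\le C_\kappa N$. The only difference is that you state explicitly the smallness $C_\kappa A_n\le 1/2$ needed for the geometric higher-grade sum, which the paper absorbs into its good-index threshold.
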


\begin{proof}
Put $N=\|p\|_M$, $a_j=a_j(p)$, and use the centered Riccati envelope
from the proof of Proposition~\ref{prop:feature-summability}:
\[
 A_n=\sum_{j\in\Z}\frac{a_j}{\kappa+|j-n|}.
\]
Since $A_n\le N/\kappa$, choosing $\delta_\kappa\le1$ with
$\delta_\kappa^2/\kappa\le c_\kappa$ ensures
$NA_n\le c_\kappa$ for every $n$.  Hence every index satisfies the good
condition used to prove \eqref{eq:riccati-error-bounds},
\eqref{eq:first-grade-feature}, and \eqref{eq:higher-grade-feature}.
Those estimates were obtained directly from the Riccati contraction and
Schur bounds and do not depend on the present lemma.

In particular, retaining the cubic decay of the positive first-grade
features in \eqref{eq:first-grade-feature}, rather than the weaker
quadratic decay used there to include the companions, gives
\[
 \|S_1[M_np]\|_{U_1}
 \le C_\kappa\sum_j\langle n-j\rangle^{-3}a_j
      +C_\kappa N A_n^2.
\]
The positive part of \eqref{eq:higher-grade-feature} gives
\[
 \left(\sum_{m\ge2}
  \|S_m[M_np]\|_{U_1}^2\right)^{1/2}
 \le C_\kappa A_n^2.
\]
For each window, split off the first grade and bound the higher-grade
norm by the square sum of the individual $U_1$ norms.  Taking the
spatial supremum then yields
\begin{equation}\label{eq:centered-riccati-envelope}
 \beta_{n,1}(p)
 \le C_\kappa\sum_jw(n-j)a_j+C_\kappa(1+N)A_n^2.
\end{equation}
This estimate concerns exactly the positive features defining
$\beta_{n,1}$; the companion terms are unnecessary here.

Let $v_\kappa(n)=(\kappa+|n|)^{-1}$.  Since
$v_\kappa\in\ell^2(\Z)$, discrete Young's inequality gives
\[
 \|A\|_{\ell^2}=\|v_\kappa*a\|_{\ell^2}
 \le\|v_\kappa\|_{\ell^2}\|a\|_{\ell^1}
 \le C_\kappa N.
\]
Define $\rho_n=C_\kappa(1+N)A_n^2$, choosing the constant as in
\eqref{eq:centered-riccati-envelope}.  Then
\[
 \sum_n\rho_n
 \le C_\kappa(1+N)\|A\|_{\ell^2}^2
 \le C_\kappa N^2,
\]
where $N\le\delta_\kappa\le1$ is used in the last step.  These are
\eqref{eq:centered-envelope} and \eqref{eq:centered-remainder}, including
$p=0$.  Replacing the unit window by a fixed larger Schwartz window only
changes the constants: the first- and higher-grade estimates in
Section~\ref{sec:fock} follow from $M$ bounds and the elementary estimate
$\|\chi_y f\|_2\le\|\chi\|_2\|f\|_\infty$.
\end{proof}

Under a small dilation, the centered linear envelope converges to the single
profile $Nw$, while the nonlinear remainder vanishes.  This produces the
uniform initial $k_R$ bound needed in the proof of
Theorem~\ref{thm:main}.

\begin{proposition}[Frequency collapse in a compensating window]
\label{prop:frequency-collapse}
Let \(q\in M\), \(N=\norm q_M\), and \(H\ge1\).  For all sufficiently
small \(0<\lambda\le1\), depending on \(q,H,\kappa,\chi\),
\begin{equation}\label{eq:collapse-bound}
 k_{H/\lambda^2}(S_\lambda q)
 \le C_{\kappa,\chi}\bigl(1+H^{1/6}N^{1/3}\bigr).
\end{equation}
The same \(\lambda\) may be used for every datum in a sufficiently small
\(M\)-neighborhood of \(q\).
\end{proposition}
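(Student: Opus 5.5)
The plan is to reduce the large-window functional to unit-window features, apply the centered feature estimate of Lemma~\ref{lem:centered-feature} to $p=S_\lambda q$, and then exploit frequency collapse. Put $R=H/\lambda^2$ and $p=S_\lambda q$. By \eqref{eq:dilationM}, $\norm p_M\le C\lambda N$, so for $\lambda$ small $p$ lies in the small ball of Lemma~\ref{lem:centered-feature}.

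\textbf{Step 1: reduce to unit windows.} By Lemma~\ref{lem:windows}, $\beta_{n,R}(p)^2\le CR\,\beta_{n,1}(p)^2$ (run the covering argument on the vector-valued feature $F_n^+[p]$). Thus $\beta_{n,R}(p)\le C\sqrt H\lambda^{-1}\beta_{n,1}(p)$, and by \eqref{eq:k-beta}
\[
 k_R(p)\le C\sum_n\min\bigl(\sqrt H\lambda^{-1}\beta_{n,1}(p),1\bigr).
\]

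\textbf{Step 2: apply the centered estimate and split off the remainder.} Lemma~\ref{lem:centered-feature} gives $\beta_{n,1}(p)\le C\sum_jw(n-j)a_j(p)+\rho_n$ with $\sum_n\rho_n\le C\lambda^2N^2$. The remainder therefore contributes at most $C\sqrt H\lambda^{-1}\lambda^2N^2=C\sqrt H\lambda N^2$, which is $\le1$ for small $\lambda$. For the linear part, the boxes of $S_\lambda q$ must be expressed through those of $q$. Since $\square_jq$ has frequencies near $j$, the function $\lambda(\square_jq)(\lambda\cdot)$ lives near frequency $\lambda j$ with sup norm $\lambda a_j$, where $a_j=\norm{\square_jq}_\infty$. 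Redoing the linear first-grade computation of Proposition~\ref{prop:feature-summability} box by box in the input decomposition (the multiplier $h_\kappa$ decays cubically in distance), I expect
\[
 \beta_{n,1}^{\rm lin}(p)\le C\lambda\sum_ja_j\,w(n-\lambda j).
\]
This retains the original box centers as the lemma's remark requires; I would rederive it directly from the linear multiplier bound rather than from $a_j(p)$, since $a_j(p)$ blurs the centers. After multiplying by $\sqrt H\lambda^{-1}$, the summand becomes $\sqrt H\sum_ja_jw(n-\lambda j)$.

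\textbf{Step 3: frequency collapse.} As $\lambda\to0$, $\sum_ja_jw(\cdot-\lambda j)\to Nw(\cdot)$ in $\ell^1(\Z)$: each term converges in $\ell^1$, the shift $n\mapsto w(n-s)$ is $\ell^1$-continuous in $s$, and dominated convergence with $\sum a_j<\infty$ handles the sum. Since $x\mapsto\min(x,1)$ is $1$-Lipschitz, we get $\sum_n\min(\sqrt H e_n,1)\le\sum_n\min(\sqrt HNw(n),1)+\sqrt H\norm{e-Nw}_{\ell^1}$. The last term is $\le1$ for small $\lambda$. For the main term, indices with $\langle n\rangle\le(\sqrt HN)^{1/3}$ contribute at most $C(1+H^{1/6}N^{1/3})$, and the remaining tail sums to $C\sqrt HN(\sqrt HN)^{-2/3}$, which is of the same size. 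This gives \eqref{eq:collapse-bound}.

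\textbf{Step 4: uniformity near $q$.} For $q'$ close to $q$ in $M$, the collapse error satisfies $\norm{\sum_j(a_j'-a_j)w(\cdot-\lambda j)}_{\ell^1}\le C\norm{a'-a}_{\ell^1}\le C\norm{q'-q}_M$ uniformly in $\lambda$, and the remainder bound depends only on $N'\le N+1$. Fixing $\lambda$ for $q$ with margin, and then shrinking the neighborhood so that $\sqrt H\,C\norm{q'-q}_M\le1$, handles all such data.

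\textbf{Main obstacle.} The hardest step is Step 2's centered linear bound with the sharp factor $\lambda$ and decay $w(n-\lambda j)$. Naively summing $a_j(p)$ over output boxes loses the $\ell^1$ collapse structure, because many input boxes merge into one. I would handle this by writing $S_\lambda q=\sum_jS_\lambda\square_jq$ and applying $h_\kappa(D+n)$ to each piece. The localized multiplier estimate then gives kernel $L^1$ norm $\lesssim\langle n-\lambda j\rangle^{-3}$ uniformly in $\lambda\le1$, since the frequency support width $4\lambda$ is at most $4$.
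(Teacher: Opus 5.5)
Your proposal is correct and is essentially the paper's argument: the unit-window reduction via Lemma~\ref{lem:windows} with $\sqrt R\,\lambda=\sqrt H$, the centered feature lemma with its remainder contributing $C\sqrt H\lambda N^2$, $\ell^1$ frequency collapse combined with the $1$-Lipschitz saturation and the occupancy sum, and a $\lambda$-independent neighborhood estimate.

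The only deviation is in Step~2, and your stated reason for it is mistaken: working through $a_\ell(p)$ loses nothing. The triangle inequality gives $a_\ell(S_\lambda q)\le C\lambda\sum_j\one_{\{|\ell-\lambda j|\le C\}}a_j$, which is a sum, not a supremum, over the merged inputs, and $w(n-\ell)\le Cw(n-\lambda j)$ for such pairs. This is how the paper obtains $C\lambda\sum_ja_jw(n-\lambda j)$ while using Lemma~\ref{lem:centered-feature} exactly as stated. Your per-piece multiplier bound is also valid, but it requires reopening that lemma's proof to separate the linear first-grade term from $\rho_n$.
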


\begin{proof}
Put \(a_j=\norm{\square_jq}_\infty\).  Each function
\(\lambda(\square_jq)(\lambda\,\cdot)\) has frequency width \(O(\lambda)\),
center \(\lambda j\), and \(L^\infty\)-norm at most \(\lambda a_j\).
Consequently, if \(p=S_\lambda q\), then
\begin{align}
 \norm p_M&\le C\lambda N,
 \label{eq:scaled-M-small}\\
 a_\ell(p)&\le C\lambda
 \sum_j\one_{\{|\ell-\lambda j|\le C\}}a_j.
 \label{eq:scaled-boxes}
\end{align}
For \eqref{eq:scaled-boxes}, decompose
$p=\sum_j\lambda(\square_jq)(\lambda\cdot)$ before applying
$\square_\ell$ and use the triangle inequality.  Each scaled input support
has width $O(\lambda)\le O(1)$ and therefore meets only a fixed number of
output unit boxes.  An output box may receive many inputs, which is exactly
why the right side is a sum rather than a supremum.  If one later sums in
$\ell$, reversing the order counts each input $j$ only a fixed number of
times, so there is no hidden overcount.
Choose \(\lambda\) so small that Lemma~\ref{lem:centered-feature} applies.
With \(R=H/\lambda^2\), Lemma~\ref{lem:windows} gives
\begin{equation}\label{eq:scaled-feature-envelope}
 \beta_{n,R}(p)
 \le C\sqrt H\sum_jw(n-\lambda j)a_j+e_n^{(\lambda)},
 \qquad
 \sum_ne_n^{(\lambda)}\le C\sqrt H\,\lambda N^2.
\end{equation}
Indeed, the linear factor is \(\sqrt R\lambda=\sqrt H\), while the
nonlinear remainder has size
\(\sqrt R(\lambda N)^2=\sqrt H\lambda N^2\).
For the linear part, substitute \eqref{eq:scaled-boxes}, interchange the
nonnegative sums, and use that the finitely many $\ell$ satisfying
$|\ell-\lambda j|\le C$ obey
$w(n-\ell)\le Cw(n-\lambda j)$.  This gives exactly
\[
 \sqrt R\sum_\ell w(n-\ell)a_\ell(p)
 \le C\sqrt R\,\lambda\sum_jw(n-\lambda j)a_j
 =C\sqrt H\sum_jw(n-\lambda j)a_j.
\]

For every fixed \(j\), translation continuity in \(\ell^1(\Z)\) gives
\[
 \sum_n|w(n-\lambda j)-w(n)|\longrightarrow0
 \qquad(\lambda\downarrow0).
\]
To verify this elementary fact, first choose $J$ so that the two tails
$|n|>J$ are uniformly small for shifts of size at most one, using
$w(n-\theta)\lesssim\langle n\rangle^{-3}$.  On the remaining finite set,
uniform continuity of $w$ makes every summand small.
These sums are uniformly bounded.  Dominated convergence with the
summable weights \(a_j\) therefore proves
\begin{equation}\label{eq:l1-frequency-collapse}
 \sum_ja_jw(n-\lambda j)\longrightarrow
 \left(\sum_ja_j\right)w(n)
 \quad\text{in }\ell_n^1.
\end{equation}
More precisely,
\[
 \sup_{\theta\in\R}\sum_{n\in\Z}w(n-\theta)<\infty,
\]
by comparison with $\sum_n\langle n\rangle^{-3}$ after reducing
$\theta$ modulo one.  Hence
\[
 \sum_n|w(n-\lambda j)-w(n)|\le C
\]
uniformly in both $j$ and $\lambda$.  For each fixed $j$ this quantity tends
to zero by continuity of translations in sampled $\ell^1$, so
$Ca_j$ is the required summable dominating sequence.
The map \(z\mapsto\min(z,1)\) is one-Lipschitz on \([0,\infty)\), and
\begin{equation}\label{eq:occupancy-sum}
 \sum_{n\in\Z}\min(s\langle n\rangle^{-3},1)
 \le C(1+s^{1/3}),\qquad s\ge0.
\end{equation}
For $s>1$, split at $J=\lceil s^{1/3}\rceil$.  The region $|n|\le J$ has
$O(J)$ terms, while the tail is at most
$Cs\sum_{|n|>J}|n|^{-3}\le CsJ^{-2}\le Cs^{1/3}$.
For $0\le s\le1$, summing the unsaturated sequence gives a uniform bound.
Use \eqref{eq:k-beta}, then
\eqref{eq:scaled-feature-envelope}--\eqref{eq:occupancy-sum}.  Finally
decrease $\lambda$ so that both
\[
 C\sqrt H\left\|\sum_ja_jw(\,\cdot-\lambda j)-Nw\right\|_{\ell^1}
 \le1
 \quad\text{and}\quad
 \sum_ne_n^{(\lambda)}\le1.
\]
The first condition controls the $\sqrt H$-amplified collapse error, and the
second controls the nonlinear remainder.  This proves
\eqref{eq:collapse-bound}.

We also record the uniformity.  If \(\widetilde q\) is close to \(q\),
the two linear envelopes in \eqref{eq:scaled-feature-envelope}, including
the factor \(\sqrt H\), differ in \(\ell^1\) by at most
\(C\sqrt H\norm{\widetilde q-q}_M\).  Choose the neighborhood first and
a common norm bound \(N_*\) on it.  One can then choose \(\lambda\) so that
\(C\lambda N_*\le\delta_\kappa\),
\(C\sqrt H\lambda N_*^2\le1\), and the collapse
\eqref{eq:l1-frequency-collapse} for the center datum is as accurate as
required.  These choices prove the last assertion.
Quantitatively, the choices can be ordered as follows.  First choose a
neighborhood radius $\delta>0$ so that
$C\sqrt H\,\delta\le1/4$, and a common bound $N_*$ on that neighborhood.
Second choose a finite set $|j|\le J$ so that the $a_j(q)$ tail contributes
at most $1/(4C\sqrt H)$ to the uniform translation bound above.  Finally
choose one $\lambda>0$ so that
\[
 C\lambda N_*\le\delta_\kappa,\qquad
 C\sqrt H\,\lambda N_*^2\le1/4,
\]
and so that the finitely many translation errors for $|j|\le J$ contribute
at most $1/4$.  For any $\widetilde q$ in the chosen neighborhood, the
difference of its box weights from those of $q$ contributes at most
$C\sqrt H\norm{\widetilde q-q}_M\le1/4$.  Thus this single $\lambda$ works
for the whole neighborhood.
\end{proof}

\section{Continuation of the mild solution}\label{sec:endpoint}

\begin{proof}[Proof of Theorem~\ref{thm:main}]
Let $q_0\in M$ and let $q$ be its maximal mild solution from
Proposition~\ref{prop:local}.  Fix $T>0$, put $N=\|q_0\|_M$, and choose
\begin{equation}\label{eq:H-choice}
 H=C_*(1+T)^{3/2}(1+N).
\end{equation}
Proposition~\ref{prop:frequency-collapse} gives a sufficiently small
$0<\lambda\le1$.  Set
\begin{equation}\label{eq:bootstrap-parameters}
 p_0=S_\lambda q_0,\qquad R=H/\lambda^2,\qquad \tau=T/\lambda^2,
\end{equation}
and let $p$ be the maximal mild solution with datum $p_0$.
By scaling and uniqueness it agrees with
$p(s,x)=\lambda q(\lambda^2s,\lambda x)$ on their common lifespan.
For a fixed constant $C_0$ we have
\[
 k_R(p_0)\le A_H:=C_0(1+H^{1/6}N^{1/3}).
\]
The parameter choice gives
\begin{align*}
 \frac TH(1+2A_H)^2
 &\le C\left(\frac TH+T H^{-2/3}N^{2/3}\right)\\
 &\le C(C_*^{-1}+C_*^{-2/3})\frac{T}{1+T}.
\end{align*}
Choose $C_*$, depending only on the fixed constants of the proof, so large
that this is smaller than $\log2/C$, where $C$ is the flux constant.

For every $0<\sigma<\min(\tau,T_+(p))$, we have
$\sigma/R\le T/H$.  The mild-solution bootstrap in
Proposition~\ref{prop:integrated-shear} therefore yields
\[
 \sup_{0\le s\le\sigma}k_R(p(s))\le2k_R(p_0)\le2A_H.
\]
The zero datum is covered by uniqueness.  Coercivity gives, uniformly on
all these shorter existing intervals,
\[
 \|p(s)\|_M\le C A_H(1+2A_H)^9.
\]
The blow-up alternative excludes $T_+(p)\le\tau$, including equality.
Scaling back continues $q$ through time $T$.  Since $T$ was arbitrary,
the solution exists for all positive times.  Applying the same argument
to $\overline{q_0}$ and using $q(t)=\overline{r(-t)}$ gives negative times.
Uniqueness on overlaps gives one global mild solution.

For completeness, local Lipschitz dependence on a fixed interval follows
directly from local stability.  Fix the reference solution $q$ and $T>0$;
its continuous trajectory has a finite bound $B$ on $[-T,T]$.  Choose a
local contraction length $\delta$ and Lipschitz constant $L\ge1$ for
data in the ball of radius $B+1$.  Subdivide the interval into finitely
many steps of length at most $\delta$ in each time direction.
Choose a neighborhood of $q_0$ so small that its initial differences,
multiplied by $L$ to the number of steps, remain less than one.  Inductively,
the perturbed endpoint data at every subdivision time stay in that ball;
local stability and uniqueness apply on the next step.  Hence
\[
 \|\widetilde q-q\|_{C([-T,T];M)}
 \le C_{T,q_0}\|\widetilde q_0-q_0\|_M
\]
throughout that neighborhood.  No scale choice uniform over an entire
norm ball is required.  The frequency truncations were used only for the
local justification of the microscopic law, before the global argument.
\end{proof}

\end{document}